\documentclass[11pt,twoside]{article}
\usepackage{times}
\usepackage{amsmath,amssymb,amsthm}
\usepackage{enumerate}
\usepackage{cite}
\usepackage{graphicx}
\usepackage{booktabs}
\usepackage{makecell}
\usepackage{epstopdf}
\usepackage{xcolor}

\allowdisplaybreaks

\newcommand{\R}{\mathbb R}
\newcommand{\N}{\mathbb N}

\newcommand{\p}{\partial}
\newcommand{\ve}{\varepsilon}
\newcommand{\f}{\frac}

\newcommand{\al}{\alpha}

\newcommand{\ds}{\displaystyle}

\allowdisplaybreaks

\theoremstyle{plain}
\newtheorem{theorem}{Theorem}[section]
\newtheorem{proposition}{Proposition}[section]
\newtheorem{lemma}[theorem]{Lemma}

\theoremstyle{definition}

\theoremstyle{remark}
\newtheorem{remark}{Remark}[section]

\numberwithin{equation}{section}

\title{Global existence of small data solutions to 3-D semilinear Euler-Poisson-Darboux equations}

\author{Li Qianqian,\quad Yin Huicheng\footnote{Li Qianqian (\texttt{214597007@qq.com}) and Yin Huicheng
    (\texttt{huicheng@} \texttt{nju.edu.cn}, \texttt{05407@njnu.edu.cn}) are supported by the NSFC
    (No.~12331007, No.~12571237).}\vspace{0.5cm}\\
\small
School of Mathematical Sciences, Nanjing Normal University, Nanjing 210023, China.\\
}
\vspace{0.5cm}

\begin{document}

\date{}

\maketitle
\thispagestyle{empty}

\begin{abstract}
There is an interesting open question:  for $n$-D ($n\ge 1$) semilinear Euler-Poisson-Darboux equation
$\p_t^2u-\Delta u+\f{\mu}{t}\p_tu=|u|^p$, where $t\ge 1$,  $p>1$ and $\mu>0$, the global small data weak solution $u$ will exist
when $p>p_{crit}(n,\mu)=\max\{p_s(n+\mu), p_f(n)\}$ with the Strauss exponent $p_{s}(n+\mu)=\f{n+\mu+1+\sqrt{(n+\mu)^2+10(n+\mu)-7}}{2(n+\mu-1)}$
and the Fujita exponent $p_f(n)=1+\f{2}{n}$. The blowup of weak solution $u$ has been shown when $1<p\le p_{crit}(n,\mu)$
meanwhile this open question has been solved for $n=1,2$.
In the present paper, we focus on this open question for $n=3$ and establish the global existence of small data solution $u$ for $\mu\geq\f{14}{5}$ (equivalent to $p_{crit}(3,\mu)=p_f(3)=\f53$) and $p>\max\{\f53, 1+\f{2}{\mu}\}$.
\end{abstract}

\noindent
\textbf{Keywords.} Euler-Poisson-Darboux equation, critical exponent, global existence, Bessel function,

\qquad \quad vector field, Klainerman-Sobolev inequality

\vskip 0.1 true cm

\noindent
\textbf{2010 Mathematical Subject Classification.} 35L70, 35L65, 35L67

\tableofcontents

\section{Introduction}
Consider  the $n$-D ($n\ge1$) semilinear Euler-Poisson-Darboux equation
\begin{equation}\label{equ:eff1-0}
\left\{ \enspace
\begin{aligned}
&\square u+\f{\mu}{t}\,\p_tu=|u|^p, \,\qquad t\geq1,\, x\in\Bbb R^n,\\
&u(1,x)=\ve u_0(x), \quad \partial_{t} u(1,x)=\ve u_1(x),
\end{aligned}
\right.
\end{equation}
where $\mu>0$,  $p>1$,
$\p_i=\p_{x_i}$ ($i=1, ..., n$), $\square=\partial_t^2-\Delta$ with $\Delta=\p_1^2+\cdot\cdot\cdot+\p_n^2$, $u_0, u_1\in
C_0^{\infty}(\R^n)$ with $\operatorname{supp} u_k\in B(0, 1)$  ($k=0, 1$), and $\ve>0$ is sufficiently small.
For the related physical backgrounds of the linear operator $\square +\f{\mu}{t}\,\p_t$ in \eqref{equ:eff1-0},
one can see \cite{DA-0}, \cite{LWY} and  references therein.
Denote by $p_{crit}(n,\mu)=\max\{p_s(n+\mu), p_f(n)\}$,
where  $p_s(z)=\f{z+1+\sqrt{z^2+10z-7}}{2(z-1)}$ ($z>1$)
is the Strauss exponent which comes from the positive root of the quadratic algebraic equation
$(z-1)p^2-(z+1)p-2=0$, and $p_f(n)=1+\f{2}{n}$ is the Fujita exponent.
It is pointed out that the Strauss exponent $p_s(n)=\f{n+1+\sqrt{n^2+10n-7}}{2(n-1)}$
originates from the  critical exponent problem of  global existence/blowup for the small data solution $v$ to $n$-D ($n\ge 2$)
semilinear wave equation
$\square v=|v|^p$ with $p>1$ (see \cite{Strauss}), meanwhile, the Fujita exponent $p_f(n)=1+\f{2}{n}$ stems from
the related $n$-D ($n\ge 2$) semilinear parabolic equation  $\partial_t v-\Delta v=|v|^p$ with $p>1$ (see \cite{Fuj}).
In terms of \cite{Rei1} and \cite{Imai}, there is an interesting open question as follows:

{\bf Open question.} {\it For $\mu>0$, when $p>p_{crit}(n,\mu)$, the small data weak solution $u$
of \eqref{equ:eff1-0} exists globally; otherwise, the solution $u$ may blow up in finite time when $1<p\le p_{crit}(n,\mu)$.
}

It is easy to know that $p_{crit}(n,\mu)=p_s(n+\mu)$ for $0<\mu\le\bar \mu(n)$ and
$p_{crit}(n,\mu)=p_f(n)$ for $\mu\ge\bar\mu(n)$ with $\bar\mu(n)=\frac{n^2+n+2}{n+2}$.
So far, the blowup results of the Open question have been systematically studied for $1<p\leq p_{crit}(n, \mu)$(see \cite{FLT}, \cite{IS},
\cite{LTW}, \cite{PR-0}, \cite{PR}, \cite{TL2} and \cite{W1}). On the other hand, the Open question has been solved for $n=1,2$ (see \cite{DA-0}, \cite{LWY},
\cite{HLY}, \cite{LY}, \cite{LY-1}, \cite{DA} and \cite{Rei1}).
However, there has been only a few  results concerning the global existence in Open question for
$n\ge 3$.

Note that for the 3-D problem
\begin{equation}\label{equ:eff1}
\left\{ \enspace
\begin{aligned}
&\square u+\f{\mu}{t}\,\p_tu=|u|^p, \,\qquad t\geq1,\, x\in\Bbb R^3,\\
&u(1,x)=\ve u_0(x), \quad \partial_{t} u(1,x)=\ve u_1(x),
\end{aligned}
\right.
\end{equation}
the corresponding open question can be stated as follows:

{\bf Open question (A).} {\it For problem \eqref{equ:eff1},

{\bf (A1)} when $\mu\ge \f{14}{5}$ and $p>p_f(3)=\f{5}{3}$, there exists a global solution $u$;

{\bf (A2)} when $0<\mu<\f{14}{5}$ and $p>p_s(3+\mu)$, the small solution $u$ exists globally.
}

In the present paper, we focus on Open problem {\bf (A1)}.
Our main result can be stated as follows.
\begin{theorem}\label{YH-1}
Let $\mu\geq\f{14}{5}$. If
$p>\max\{\f{5}{3}, 1+\f{2}{\mu}\}$, then
there exists a small constant $\varepsilon_0>0$ such that for $\ve<\ve_0$,
\eqref{equ:eff1} admits a unique global solution $u\in C\left([1, \infty) ; H^{2} (\mathbb R^3) \right) \cap C^{1}\left([1, \infty) ; H^{1}(\mathbb R^3) \right)\cap C^{2}\left([1, \infty) ; L^{2}(\mathbb R^3) \right)$.
\end{theorem}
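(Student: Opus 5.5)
We prove the global existence by a continuity (bootstrap) argument in a weighted energy space built from the vector fields that are compatible with the operator $\square+\f{\mu}{t}\p_t$. These are the spatial derivatives $\p_i$ and the rotations $\Omega_{ij}=x_i\p_j-x_j\p_i$. The Lorentz boosts and the scaling field do not commute well with the damping term, so we avoid them, or use the scaling field only in the modified form $t\p_t+r\p_r$ at the price of lower-order terms.

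\textbf{Step 1: pointwise decay of the linear solution.} First we derive an explicit representation of the linear solution operator. After the Fourier transform in $x$, the equation $\p_t^2\hat v+|\xi|^2\hat v+\f{\mu}{t}\p_t\hat v=0$ is solved by $t^{\f{1-\mu}{2}}$ times Bessel functions $J_{\f{\mu-1}{2}}(t|\xi|)$ and $Y_{\f{\mu-1}{2}}(t|\xi|)$. For the inhomogeneous problem with source $F(s,\cdot)$ at time $s$, the Duhamel kernel is
\[
\f{\pi}{2}\,s\Big(\f{s}{t}\Big)^{\f{\mu-1}{2}}\big(J_\nu(s|\xi|)Y_\nu(t|\xi|)-Y_\nu(s|\xi|)J_\nu(t|\xi|)\big),\qquad \nu=\f{\mu-1}{2}.
\]
Using the asymptotics of the Bessel functions (small argument versus oscillatory large argument), we split frequencies into $t|\xi|\lesssim 1$ and $t|\xi|\gtrsim1$, and similarly in $s$. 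This gives the following pointwise and $L^\infty$–$L^1$ type estimates in 3-D:
\begin{itemize}
\item a wave-like decay $t^{-\f{\mu}{2}}(1+|t-|x||)^{\cdot}$ near the light cone, from the high frequency part;
\item a heat-like decay $t^{-3/2}$ type bound from the low frequency part, which behaves like the free damped (parabolic) regime when $\mu$ is large.
\end{itemize}
The relevant weight is $w(t,x)=t^{\f{\mu}{2}}\langle t-|x|\rangle^{\gamma}$ or an analogue. Because of finite propagation speed the support stays in $|x|\le t$, which we use throughout.

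\textbf{Step 2: weighted energy and Klainerman–Sobolev estimates.} For $Z\in\{\p,\Omega\}$ and $|\alpha|\le 2$ (or a few more derivatives), we multiply the equation for $Z^\alpha u$ by $t^{\mu}\p_tZ^\alpha u$, or by a ghost-weight multiplier. This yields energy bounds with the time weight $t^{\mu}$, and we combine them with a Klainerman–Sobolev inequality adapted to rotations and the cone $|x|\le t$. We then define the bootstrap norm
\[
M(T)=\sup_{1\le t\le T}\Big(\sup_x t^{a}\langle t-|x|\rangle^{b}|u|+\sum_{|\alpha|\le2}t^{-\delta}\|t^{\mu/2}\p Z^\alpha u\|_{L^2}\Big),
\]
with $a,b$ chosen from Step 1. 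Heuristically $a$ should be $\min\{\f32,\f{\mu}{2}\}$-related. For $\mu\ge\f{14}{5}$ the low-frequency (Fujita) part dominates.

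\textbf{Step 3: closing the nonlinear estimate.} We insert $|u|^p\lesssim M^p t^{-ap}\langle t-|x|\rangle^{-bp}$ into the Duhamel formula of Step 1. The low-frequency part gives the time integral $\int_1^t s^{\cdot}(t/s)^{\cdot}s^{-ap+3}\,ds$. Its convergence, uniformly in $t$ relative to the claimed decay, requires $p>1+\f{2}{3}$ (Fujita) together with the restriction $p>1+\f{2}{\mu}$. The latter comes from the $(s/t)^{\mu-1}$ factor when the low-frequency decay rate $t^{-\mu}$-type is saturated, and this is exactly where that condition enters. The high-frequency part needs $p$ above the shifted Strauss exponent $p_s(3+\mu)$, which is below $\f53$ when $\mu\ge\f{14}{5}$. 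Energy estimates for $Z^\alpha(|u|^p)$ use $p>1$ only mildly. Since $p$ may be less than $2$, we keep to two derivatives and use the fractional chain rule, which is why the regularity is $H^2$. We thus get $M(T)\le C\ve+CM(T)^p$, so $M$ stays bounded by $2C\ve$ for small $\ve$. Local existence and uniqueness in $H^2\times H^1$ plus continuation then give the global solution.

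\textbf{Main obstacle.} The hardest step is the sharp pointwise estimate in Step 1 for the Duhamel term with source weights $s^{-ap}\langle s-|y|\rangle^{-bp}$, uniformly over the interior region $|x|\ll t$ and the cone region. Here the Bessel kernel mixes wave and heat behavior. I would first try to write the kernel in physical space via the classical Euler–Poisson–Darboux (hypergeometric) representation on $\{|x-y|\le t-s\}$ and estimate it directly, exploiting its positivity and homogeneity, falling back on the frequency splitting if that fails.
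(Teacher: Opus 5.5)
Your Duhamel kernel is correct, but the mechanism you propose for time decay in Step 2 does not work, and this is the core of the proof. Multiplying $\partial_t^2w-\Delta w+\frac{\mu}{t}\partial_tw=G$ by $t^{\mu}\partial_tw$ gives
\[
\frac{d}{dt}\Big(\frac{t^{\mu}}{2}\big(\|\partial_tw\|_{L^2}^2+\|\nabla w\|_{L^2}^2\big)\Big)=-\frac{\mu}{2}t^{\mu-1}\|\partial_tw\|_{L^2}^2+\frac{\mu}{2}t^{\mu-1}\|\nabla w\|_{L^2}^2+\int_{\R^3}t^{\mu}\partial_tw\,G\,dx .
\]
The gradient term has the wrong sign. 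Switching to the multiplier $t^{\mu/2}\partial_t(t^{\mu/2}w)$ only trades it for $+\frac{\mu(\mu-2)}{4}t^{\mu-3}\|w\|_{L^2}^2$, which is again of the wrong sign for $\mu>2$. This is structural, not technical. For $\mu>2$ the frequencies $|\xi|\lesssim t^{-1}$ are essentially frozen, $\hat u(t,\xi)\approx\hat u_0(\xi)+\frac{1}{\mu-1}\hat u_1(\xi)$. So already for the linear flow with compactly supported data and $m=\int(u_0+\frac{u_1}{\mu-1})\,dx\neq0$ one has $\|\nabla u(t)\|_{L^2}\gtrsim|m|\,t^{-5/2}$. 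Hence the quantity $t^{\mu/2-\delta}\|\partial u\|_{L^2}$ in your $M(T)$ is unbounded once $\mu>5+2\delta$, a range the theorem covers. For smaller $\mu$, no bound $t^{-\mu/2}$ holds in energy-type norms of the data; the rate has to come from $L^1$-type low-frequency information on the data and on $|u|^p$, which your multipliers do not provide.

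That information is exactly what the paper supplies instead. Lemmas~\ref{lem2}--\ref{lem3} and~\ref{lem4} bound $\|v\|_{Z,1,2}$ and $\|\partial v\|_{Z,1,2}$ for the full family $\partial,L_0,L_j,\Omega_{kl}$ directly from the Bessel/Hankel multipliers, in the zones $|\xi|\ge1$, $t^{-1}\le|\xi|\le1$ and $|\xi|\le t^{-1}$. They compute $t\partial_t\hat v$, $\xi\cdot\nabla_\xi\hat v$ and $t\xi_j\hat v$ explicitly and absorb $x_j\partial_t$ via $|x|\lesssim t$, so nothing is ever commuted with $\square+\frac{\mu}{t}\partial_t$. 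The data are measured in $\|\cdot\|_{Z,1,(q,2)}$ with $q\in\{\frac65,\frac{6}{3+\mu},1+\varepsilon_1\}$. This is also why $X(T)$ carries the weight $t^{\mu/2}$ only for $\frac{14}{5}\le\mu<3$, and a weight close to $t^{3/2}$ for $\mu\ge3$.

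Steps 1 and 3 are not actually carried out: $a,b,\gamma$ are never fixed and the pointwise Duhamel estimate is left as the ``main obstacle'', so nothing closes. The heuristics you would feed in are also off. With scale-invariant damping the low-frequency part is self-similar on the scale $t$, not $\sqrt t$: $u(t,x)\approx m\,t^{-3}\Phi(x/t)$ with $\Phi=c_\mu(1-|y|^2)_+^{(\mu-4)/2}$. So $t^{-3/2}$ is the $L^2$ rate, not a pointwise one. For example, with the low-frequency $L^1\to L^\infty$ kernel bound $s\,t^{-3}$ and $a=\frac32$, the requirement $t^{-3}\int_1^t s\cdot s^{3-ap}\,ds\lesssim t^{-a}$ forces $p\ge\frac73$; the threshold $\frac53$ appears only with the interior rate $a=3$.

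Likewise, $p>1+\frac{2}{\mu}$ does not come from a factor $(s/t)^{\mu-1}$. In the paper it is the integrability condition \eqref{crit} for $\tau^{\frac{\mu}{2}}\tau^{-\frac{\mu p}{2}}$. That exponent comes from the Duhamel bound $t^{-\frac{\mu}{2}}\tau^{\frac{\mu}{2}}$ of Lemma~\ref{lem4} combined with $\||u|^p\|_{Z,1,(\frac{6}{3+\mu},2)}\lesssim\tau^{-\frac{\mu p}{2}}\|u\|_{X(T)}^p$, an artifact of the $L^2$ method (Remark~\ref{re-0}).

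Finally, energy estimates for $\partial Z^\alpha u$ with $|\alpha|=2$ need $Z^\alpha(|u|^p)\in L^2$. This contains $p(p-1)|u|^{p-2}(Zu)^2$, which is not controlled for $p\in(\frac53,2)$; a fractional chain rule gives fewer than $p$ derivatives, not two vector fields. The paper avoids this by using a single vector field, $\|u\|_{Z,1,2}+\|\partial u\|_{Z,1,2}$, together with the Sobolev embedding on $S^2$ of Lemma~\ref{YHC-05}, so only first derivatives of $|u|^p$ ever appear.
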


\begin{remark}\label{re-0}
{\it Note that when $\mu\geq3$, $\max\{\f{5}{3}, 1+\f{2}{\mu}\}=p_f(3)$; when $\f{14}{5}\leq\mu<3$, $\max\{1+\f{2}{\mu}, \f{5}{3}\}=1+\f{2}{\mu}$.
Therefore, for $\mu\ge 3$, the Open question (A1) is solved. However, for $\f{14}{5}\leq\mu<3$, there still exists a gap
of $p_{crit}(3,\mu)-(1+\f{2}{\mu})=p_f(3)-(1+\f{2}{\mu})=\f{2(3-\mu)}{3\mu}\le\f{1}{21}$ due to our technical reasons
(the requirement  for the convergence of the integral $\int_{1}^{t} \tau^{\f{\mu}{2}+(-\f{\mu}{2})p}\mathrm{d}\tau<\infty$ when $p>1+\f{2}{\mu}$,
one can see \eqref{crit} of Section 5 below).}
\end{remark}

\begin{remark}\label{re-3}
{\it For problem \eqref{equ:eff1-0}, when $\mu\in (0,1)\cup (1,2)$ and $p>p_s(n+\mu)$,
as pointed in  \cite[Remark 1.14]{HLY}, the global existence of small data solution $u$ to problem \eqref{equ:eff1-0} has been obtained.
In our forthcoming work, we will study the Open problem {\bf (A2)} and the remaining case of $\f{14}{5}<\mu<3$ in the Open problem {\bf (A1)}
by establishing some delicate space-time weighted Strichartz estimates.}
\end{remark}

For clarity, when $p>p_{crit}(n,\mu)$, the known global existence results of problem \eqref{equ:eff1-0} can be
roughly listed in the following table.
\begin{table}[htbp]
\centering
\footnotesize
\renewcommand{\arraystretch}{1.8}
\begin{tabular}{cccc}
\toprule

References & Space dimension $n$ & Damping coefficient $\mu$ & Nonlinearity exponent $p$ \\
\midrule

\cite{DA}
&
\makecell{$n=1$\\ $n=2$\\ $n\ge3$}
&
\makecell{
$\mu\ge\bar{\mu}(1)=\dfrac{4}{3}$\\
$\mu\ge3$\\
$\mu\ge n+2$
}
&
\makecell{
$p>p_f(1)$\\
$p>p_f(2)$\\
$p_f(n)<p\le\dfrac{n}{n-2}$
}
\\
\midrule

\cite{DA-0}
&
$n=1$
&
$\mu>1$
&
$p>p_{crit}(1,\mu)$
\\
\midrule

\cite{Rei1}
&
\makecell{$n=2$\\$n=3$ (radial symmetric)}
&
$\mu=2$
&
$p>p_s(5)$
\\
\midrule

\cite{HLY}, \cite{LWY}-\cite{LY-1}
&
$n=2$
&
$1<\mu<3$
&
$p>p_{crit}(2,\mu)$
\\

\midrule

\cite{LZ}
&
\makecell{$n=3$\\ (radial symmetric)}
&
$\mu\in[\frac{3}{2},2)$
&
$p_s(3+\mu)<p\le2$
\\
\midrule

\cite{HLY}
&
\makecell{$n=3$}
&
$0<\mu<2, \mu\not=1$
&
$p>p_s(3+\mu)$
\\

\midrule

\cite{P1}
&
\makecell{$n\ge4$\\ (even dimensions, radial symmetric)}
&
$\mu=2$
&
$p_s(n+2)<p<p_f\left(\frac{n+1}{2}\right)$
\\
\midrule

\cite{HLY},\cite{HL}
&
$n\ge4$
&
$0<\mu<2$
&
$p>p_s(n+\mu)$
\\
\midrule

\cite{Rei2}
&
\makecell{$n\ge5$\\ (odd dimensions, radial symmetric)}
&
$\mu=2$
&
$p_s(n+\mu)<p<\min\left\{2,\frac{n+1}{n-3}\right\}$
\\

\bottomrule
\end{tabular}
\end{table}

On the other hand, the unsolved cases in the Open question before our result Theorem \ref{YH-1} are also given in the following table.

\begin{table}[htbp]
\centering
\footnotesize
\renewcommand{\arraystretch}{1.8}
\begin{tabular}{cccc}
\toprule

Space dimension $n$ & Damping coefficient $\mu$ & Nonlinearity exponent $p$ \\
\midrule

$n=3$
&
$\mu=1$\quad{or $\mu\in[2,\frac{14}{5}]$}
&
$p>p_{s}(3,\mu)$
\\

\midrule

$n=3$
&
$\mu\ge \frac{14}{5}$
&
$p>p_f(3)=\f53$
\\
\midrule

$n\ge 4$
&
$2\le\mu<\bar\mu(n)=\f{n^2+n+2}{n+2}$
&
$p>p_s(n+\mu)$
\\

\midrule

$n\ge4$
&
$\mu\ge\bar\mu(n)$
&
$p>p_f(n)$
\\

\bottomrule
\end{tabular}
\end{table}

To prove Theorem \ref{YH-1}, as in \cite{LY} for treating the 2-D semilinear Euler-Poisson-Darboux equations,
we will utilize the vector field method together with the delicate time-decay analysis for the
3-D  linear Euler-Poisson-Darboux equation $\Box v+\f{\mu}{t}\p_tv=F$ with $(v,\p_tv)(1,x)=(v_0,v_1)(x)$.
The vector field method comes from the studies on the long time existence or blowup of classical solutions to
the $n-$D ($n\ge 2$)  quasilinear wave equation
$\square \psi+\ds\sum_{l,j,k=0}^ng_{lj}^k\p_{lj}^2\psi\p_k\psi=0$ (see \cite{KS}-\cite{KP}).
Define the vector field $Z\in\{\p, L_0, L_j, \Omega_{kl}:  1\le j\le n, 1\le k<l\le n\}$,
where $\p=(\partial_0, \partial_1, \ldots, \partial_n)$ $=(\partial_t, \partial_{x_1}, \ldots, \partial_{x_n})$, $L_0=t \partial_t+x_1 \partial_1+\cdots+x_n \partial_n$,
$L_j=t\partial_j+x_j \partial_t$ and $\Omega_{k l}=x_k \partial_l-x_l \partial_k$.
Due to $[\square, \p]=0,\quad [\square, L_0]=2\square,\quad [\square, L_j]=0, \quad [\square, \Omega_{k l}]=0$,
then $Z^I\psi$ ($|I|\le N$ and $N\in\Bbb N$) satisfy
\begin{equation}\label{YSon-0}
\square Z^I\psi+\sum_{l,j,k=0}^n\sum_{|I'|+|I''|\le|I|}C_{lj}^k\p_{lj}^2Z^{I'}\psi\p_kZ^{I''}\psi=0.
\end{equation}
By the standard energy estimates for the nonlinear wave equations, $\|Z^I\p \psi\|_{L_x^2(\mathbb R^n)}$ can be obtained
in terms of the initial data.
It follows from the crucial Klainerman-Sobolev inequality
\begin{equation}\label{YSon-1}
|\p \psi(t,x)|\le \f{C}{(1+|t-r|)^{\f12}(1+t)^{\f{n-1}{2}}}\ds\sum_{|J|\le [\f{n}{2}]+1}\|Z^J\p \psi(t,x)\|_{L_x^2(\mathbb R^n)}
\end{equation}
that the sharp time-decay and spacetime-decay rates of $\p\psi$ are established, where $r=\sqrt{x_1^2+\cdot\cdot\cdot+x_n^2}$.
For more detailed introductions and more extensive applications of the vector field methods, one can see \cite{H}, \cite{KS}-\cite{KP} and \cite{Li-Chen}-\cite{LX}.
However, as illustrated in \cite{LY},  the commutator $[\square+\f{\mu}{t}\,\p_t, Z]$ can not be expressed as an efficient linear combination of
$\square+\f{\mu}{t}\,\p_t$ and $Z$. This leads to that the vector field method in nonlinear wave equation can not
be applied to problem \eqref{equ:eff1-0} directly.

Motivated by \cite{LX}, where the authors treated the lower bounds
of the lifespan of classical solutions to the Cauchy problem of the fully $n-$D ($n\ge 3$) nonlinear wave equation
$\square u=F(u,\p u, \p^2 u)$ with
$F(u,\p u, \p^2 u)=O(|(u, \p u, \p^2 u)|^{1+\lambda})$ ($\lambda\in\Bbb N$),  we will introduce the following vector field norms
for 3-D problem \eqref{equ:eff1}
\begin{equation}\label{norm}
\text{$\|v\|_{Z, s,(p, q)}=\sum_{|\alpha| \leq s}\left\|Z^\alpha v\right\|_{(p, q)}$ and
$\|v\|_{Z, s, p}=\|v\|_{Z, s,(p, p)}$ with $\|v\|_{(p,q)}=\|v(r \omega) r^{\frac{2}{p}}\|_{L^p([0,+\infty); L^q(S^2))}$},
\end{equation}
where $s\in\N_0, p,q\in[1,\infty], \omega=\f{x}{r}\in S^2$.
From \eqref{norm}, for $1 \le p < +\infty$ and $q = \infty$, one has
\begin{equation}\label{norm2}
\begin{aligned}
\|v\|_{(p, \infty)}
= \big( \int_{0}^{\infty} \big\|v(r\omega)\big\|_{L^{\infty}(S^{2})}^{p} r^{2} \mathrm{d}r \big)^{\frac{1}{p}}
= \big( \int_{0}^{\infty} \big( \operatorname*{ess\,sup}_{|\omega|=1} |v(r\omega)| \big)^{p} r^{2} \mathrm{d}r \big)^{\frac{1}{p}} ;
\end{aligned}
\end{equation}
for $p = q = \infty$,
\begin{equation}\label{norm3}
\|v\|_{(\infty, \infty)}
= \operatorname*{ess\,sup}_{\substack{0\le r<\infty \\ |\omega|=1}} |v(r\omega)|
= \operatorname*{ess\,sup}_{x\in\mathbb{R}^3} |v(x)| .
\end{equation}

However, compared with the 2-D case in \cite{LY}, there are some different techniques in the present paper.

\vskip 0.1 true cm

${\bf \bullet}$  For $\mu\ge 3$, when $\rho=-\frac{\mu-1}{2}\le -1$ is a negative integer,
the corresponding Hankel functions of order $\rho$ should be replaced by
the Bessel functions of the second kind. In this case, some related estimates will be carried out
by utilizing the basic properties of the Bessel functions of the second kind. Note that in \cite{LY}, due to $2<\mu<3$,
then $\rho=-\frac{\mu-1}{2}\in (-1, -\f12)\not\in\Bbb Z$ and the resulting Hankel functions can be applied directly.

\vskip 0.1 true cm

${\bf \bullet}$ For the 2-D case in \cite{LY}, the following Sobolev embedding theorem on $S^1$ plays a key role
(see (5.11) of \cite{LY})
\begin{equation}\label{L11-0}
\begin{aligned}
\|u(\tau, \cdot)\|_{(2, \infty)} \lesssim \|u\|_{Z, 1, 2}.
\end{aligned}
\end{equation}
However, \eqref{L11-0} is not applicable for $\Bbb S^2$ and $x\in\Bbb R^3$. Thanks to some delicate observations, we can obtain
the following Sobolev imbedding (see \eqref{Q11} of Section 2 below)
\begin{equation}\label{Q20-0}
\|u(\tau, \cdot)\|_{(2, \infty)}\lesssim\|u(\tau, \cdot)\|_{Z, 1, 2}+\|\p u(\tau, \cdot)\|_{Z, 1, 2},
\end{equation}
which will be crucial in proving Theorem \ref{YH-1}.

On the other hand, by the Bessel function and Fourier analysis tools, the explicit expression of
solution mapping to the 3-D problem \eqref{equ:eff1} can be written as
\begin{equation}\label{Q2-0}
\begin{aligned}
\mathcal{N} u(t, x)&=\ve\Psi_{0}(t, 1, D)u_{0}(x)+\ve\Psi_{1}(t, 1, D) u_{1}(x)
+\int_{1}^{t} \Psi_{1}(t, \tau, D)|u(\tau, x)|^{p} \mathrm{d} \tau,
\end{aligned}
\end{equation}
where the symbols of the pseudo-differential operators $\Psi_{0}(t, 1, D)$ and $\Psi_{1}(t, 1, D)$
are given in \eqref{equ:q9-3} of Section \ref{sec2-1} below.
Introducing the function space $X(T)$
$$
X(T)=\{u(t,x): \|u\|_{X(T)}<\infty, \operatorname{supp} u(x, t)\subset\{|x|\lesssim t\}\}
$$
with the norm
\begin{equation}\label{Q1-0}
\|u\|_{X(T)}=\begin{cases}
\sup _{t \in[1, T]}\big(t^{\f{\mu}{2}}\|u\|_{Z, 1, 2}+t^{\f{\mu}{2}}\|\p u\|_{Z, 1, 2}\big),&\f{14}{5}\leq\mu<3,\\
\sup _{t \in[1, T]}\big(t^{\f32-\f{3}{1+\delta}}\|u\|_{Z, 1, 2}+t^{\f32-\f{3}{1+\delta}}\|\p u\|_{Z, 1, 2}\big),&\mu\geq3,
\end{cases}
\end{equation}
where $T>1$ is any fixed number, $\delta>0$ is any fixed small constant.
By some involved estimates on $\|u\|_{X(T)}$, we can show that the mapping $\mathcal{N}$ in \eqref{Q2-0}
is contractible in a closed subspace of $X(T)$. Therefore, it follows from the fixed point principle that the 3-D
problem \eqref{equ:eff1} admits a global small solution $u\in C\left([1, \infty) ; H^{2}\right) \cap C^{1}\left([1, \infty) ; H^{1}\right)\cap C^{2}\left([1, \infty) ; L^{2}\right)$. It is worth pointing out that the gap in the case
$\f{14}{5}<\mu<3$ comes from the integrability condition of the time-integral in the Duhamel term (see \eqref{crit} below), which requires
$p>1+\f{2}{\mu}$.

\vskip 0.3 true cm

\noindent\textbf{Notations.}

\vskip 0.1 true cm

(1) For nonnegative quantities \(f\) and \(g\), \(f\lesssim g\) means \(f\le Cg\) for some generic positive constant \(C\) independent of $\ve$,
 and \(f\sim g\) means \(g\lesssim f\lesssim g\).

(2)
$
\partial=(\partial_t,\partial_1,\partial_2,\partial_3),
L_0=t\partial_t+x_1\partial_1+x_2\partial_2+x_3\partial_3,
L_j=t\partial_j+x_j\partial_t~(j=1,2,3), \text{and}\,
\Omega_{lk}=x_l\partial_k-x_k\partial_l~ (1\leq l<k\leq3)$,
$Z\in\{\partial, L_0, L_j, \Omega_{lk}: 1\le j\le 3, 1\leq l<k\leq3\}$.

(3) For \(\xi\in\mathbb R^3\) and \(t\ge\tau\ge1\), set
\[
D_1=\{\xi: |\xi|\ge1\},\qquad D_2=\{\xi: |\xi|\le1\le t|\xi|\},\qquad D_3=\{\xi: t|\xi|\le1\}
\]
and
\[
D'_1=\{\xi: \tau|\xi|\ge1\},\qquad D'_2=\{\xi: \tau|\xi|\le1\le t|\xi|\},\qquad D'_3=\{\xi: t|\xi|\le1\}.
\]

This paper is organized as follows. In Section 2, at first, by utilizing
the Bessel functions of the first kind and the second kind, and Hankel functions,
we give out the explicit representation of the solution $v$ to the linear homogeneous equation $\square v +\f{\mu}{t}\,\p_tv=0$
with $(v,\p_tv)(\tau,x)=(v_0(x), v_1(x))$. Meanwhile, some crucial properties of Bessel functions and Hankel functions
are listed. Secondly, in terms of the vector field norm in \eqref{norm}, a useful Sobolev imbedding on $\Bbb S^2$
is derived.
In Section 3, we establish a series of time-decay estimates for the solution $v$ to $\square v+\f{\mu}{t}\,\p_tv=0$
under the actions of the vector field $Z\in\{\p, L_0, L_j, \Omega_{lk}: 1\le j\le 3, 1\le l<k\le 3\}$.
In Section 4, several time-decay estimates are derived for the linear inhomogeneous
equation $\square w +\f{\mu}{t}\,\p_tw=F$ with vanishing initial data.
In Section 5, based on  the estimates in Sections 3-4,  Duhamel's principle and the contraction mapping principle,
the  proof of Theorem  \ref{YH-1} can be completed.

\section{ Preliminaries}\label{sec2-1}

\subsection{Explicit solution of 3-D homogeneous equation $\square v+\f{\mu}{t}\,\p_tv=0$}\label{sec2-1-1}

Let $v$ solve the following 3-D problem
\begin{equation}\label{equ:q1}
\left\{ \enspace
\begin{aligned}
&\square v+\f{\mu}{t}\,\p_tv=0, &&
t\geq \tau \geq1,\\
&v(\tau,x)=v_0(x), \quad \partial_{t} v(\tau,x)=v_1(x), &&x\in\R^3,
\end{aligned}
\right.
\end{equation}
where $\mu\geq \f{14}{5}$ and $v_k(x)\in C_0^{\infty}(B(0,1))$ $(k=0, 1)$.
Taking the Fourier transformation of $v(t,x)$ for the spatial variable $x$, one has from \eqref{equ:q1} that
\begin{equation}\label{equ:q2}
\left\{ \enspace
\begin{aligned}
&\partial_t^2 \hat{v} +|\xi|^{2} \hat{v} +\f{\mu}{t}\,\p_t\hat{v}=0, &&
t\geq \tau \geq 1,\\
&\hat{v}(\tau,\xi)=\hat{v}_0(\xi), \quad \partial_{t} \hat{v}(\tau,\xi)=\hat{v}_1(\xi), &&\xi\in\R^3.
\end{aligned}
\right.
\end{equation}
Assume that $\rho=-\f{\mu-1}{2}\in(-\infty, -\f{9}{10}]$ is  an integer, namely, $\mu\geq \f{14}{5}$ is  an odd integer. Then, by   \cite[Theorem 2.1]{Wirth-3} and \cite[Lemma 2.1]{LY}, we can write the solution $\hat{v}(t, \xi)$ of
\eqref{equ:q2} as
\begin{equation}\label{equ:q4}
\hat{v}(t, \xi)=\Psi_{0}(t, \tau, \xi) \hat{v}_{0}(\xi)+\Psi_{1}(t, \tau, \xi) \hat{v}_{1}(\xi),
\end{equation}
where the multipliers $\Psi_{j}(t, \tau, \xi)(j=0,1)$ and their time derivatives are given by
\begin{equation}\label{equ:q6}
\Psi_{j}(t, \tau, \xi)=(-1)^{j}\frac{i \pi}{4}|\xi|^{1-j}  \frac{t^{\rho}}{\tau^{\rho-1}}\left|\begin{array}{ll}
H_{\rho}^{+}(t|\xi|) & H_{\rho-1+j}^{+}(\tau|\xi|) \\
H_{\rho}^{-}(t|\xi|) & H_{\rho-1+j}^{-}(\tau|\xi|)
\end{array}\right|,
\end{equation}
\begin{equation}\label{equ:q9}
\partial_{t} \Psi_{j}(t, \tau, \xi)=(-1)^{j} \frac{i \pi}{4}|\xi|^{2-j} \frac{t^{\rho}}{\tau^{\rho-1}}\left|\begin{array}{ll}
H_{\rho-1}^{+}(t|\xi|) & H_{\rho-1+j}^{+}(\tau|\xi|)  \\
H_{\rho-1}^{-}(t|\xi|) & H_{\rho-1+j}^{-}(\tau|\xi|)
\end{array}\right|,
\end{equation}
and
\begin{equation}\label{equ:q9-1}
\begin{aligned}
&\partial_{t}^2 \Psi_{j}(t, \tau, \xi)=(-1)^{j} \frac{i \pi}{4}|\xi|^{3-j} \frac{t^{\rho}}{\tau^{\rho-1}}\left|\begin{array}{ll}
H_{\rho-2}^{+}(t|\xi|) & H_{\rho-1+j}^{+}(\tau|\xi|)  \\
H_{\rho-2}^{-}(t|\xi|) & H_{\rho-1+j}^{-}(\tau|\xi|)
\end{array}\right|+t^{-1}\p_t\Psi_{j}(t, \tau, \xi)
\end{aligned}
\end{equation}
with $i=\sqrt{-1}$.
Here,  $H_{\nu}^{\pm}(z)$ stand for the Hankel functions of order $\nu$ ($\nu\in\R$), namely,
\begin{equation}\label{equ:q9-2}
H_\nu^{\pm}(z) = J_\nu(z) \pm i Y_\nu(z),
\end{equation}
where $J_\nu(z)$ and $Y_\nu(z)$ are the Bessel functions of the first and second kinds, respectively.
For $\gamma\in \mathbb{C} \setminus \Bbb{Z}$, one has the following definitions
\begin{equation*}
\begin{aligned}
J_{\gamma}(z)&=(\f{z}{2})^{\gamma}\sum_{k=0}^{\infty}(-1)^k\f{z^{2k}}{4^kk!\Gamma(\gamma+k+1)},\\
Y_{\nu}(z)&=\f{J_{\nu}(z)cos(\nu\pi)-J_{-\nu}(z)}{sin(\nu\pi)}.
\end{aligned}
\end{equation*}
On the other hand, for $n\in \N$, it holds that from \cite[Section 10]{OLBC}, $$J_{-n}(z)=(-1)^nJ_{n}(z)=(-1)^n(\f{z}{2})^{n}\sum_{k=0}^{\infty}(-1)^k\f{z^{2k}}{4^kk!n(n+k+1)}$$ and
$$
Y_{-n}(z)=(-1)^nY_n(z) = \left.\frac{(-1)^n}{\pi}\frac{\partial J_\nu(z)}{\partial\nu}\right|_{\nu=n}
+ \left.\frac{1}{\pi}\frac{\partial J_\nu(z)}{\partial\nu}\right|_{\nu=-n}.
$$
Therefore, it follows from \eqref{equ:q6}-\eqref{equ:q9-1} and \eqref{equ:q9-2} that
\begin{equation}\label{equ:q9-3}
\begin{aligned}
\Psi_j(t,\tau,\xi)
&= (-1)^{j}\frac{i \pi}{4}|\xi|^{1-j}  \frac{t^{\rho}}{\tau^{\rho-1}}
\begin{vmatrix}
J_{\rho}(t|\xi|) + iY_{\rho}(t|\xi|) & J_{\rho-1+j}(\tau|\xi|) + iY_{\rho-1+j}(\tau|\xi|) \\
J_{\rho}(t|\xi|) - iY_{\rho}(t|\xi|) & J_{\rho-1+j}(\tau|\xi|) - iY_{\rho-1+j}(\tau|\xi|)
\end{vmatrix} \\
&= (-1)^{j+1}\frac{\pi}{2}|\xi|^{1-j}  \frac{t^{\rho}}{\tau^{\rho-1}}
\begin{vmatrix}
J_{\rho-1+j}(\tau|\xi|) & J_{\rho}(t|\xi|) \\
Y_{\rho-1+j}(\tau|\xi|) & Y_{\rho}(t|\xi|)
\end{vmatrix},
\end{aligned}
\end{equation}
\begin{equation}\label{equ:q9-4}
\partial_{t} \Psi_{j}(t, \tau, \xi)=(-1)^{j+1} \frac{\pi}{2}|\xi|^{2-j} \frac{t^{\rho}}{\tau^{\rho-1}}\left|\begin{array}{ll}
J_{\rho-1+j}(\tau|\xi|) & J_{\rho-1}(t|\xi|)  \\
Y_{\rho-1+j}(\tau|\xi|) & Y_{\rho-1}(t|\xi|)
\end{array}\right|
\end{equation}
and
\begin{equation}\label{equ:q9-5}
\partial_{t}^2 \Psi_{j}(t, \tau, \xi)=(-1)^{j+1} \frac{\pi}{2}|\xi|^{3-j} \frac{t^{\rho}}{\tau^{\rho-1}}\left|\begin{array}{ll}
J_{\rho-1+j}(\tau|\xi|) & J_{\rho-2}(t|\xi|)  \\
Y_{\rho-1+j}(\tau|\xi|) & Y_{\rho-2}(t|\xi|)
\end{array}\right|+t^{-1}\p_t\Psi_{j}(t, \tau, \xi),
\end{equation}
where $\rho\in(-\infty, -\f{9}{10}]$ is  an integer.

When $\rho=-\f{\mu-1}{2}\in(-\infty, -\f{9}{10}]$ is not an integer, that is,  $\mu\geq \f{14}{5}$ is not an odd integer,
by \cite[Section 2]{LY} we have that for $k,j=0,1$,
\begin{equation}\label{equ:q9-5'}
\partial_t^k \Psi_j(t,\tau,\xi)
=
\frac{\pi}{2}\csc(\rho\pi)
|\xi|^{1+k-j}
\frac{t^\rho}{\tau^{\rho-1}}
\begin{vmatrix}
J_{-(\rho-1+j)}(\tau|\xi|) & J_{-(\rho-k)}(t|\xi|) \\
(-1)^{1+k-j}J_{\rho-1+j}(\tau|\xi|) & J_{\rho-k}(t|\xi|)
\end{vmatrix}
\end{equation}
and
\begin{equation}\label{l-3}
\begin{aligned}
\p_t^2\Psi_{j}(t, \tau, \xi)
& =\frac{\pi}{2}|\xi|^{3-j} \frac{t^{\rho}}{\tau^{\rho-1}}\csc(\rho\pi)\left|\begin{array}{ll}
J_{-(\rho-1+j)}(\tau|\xi|) & J_{-(\rho-2)}(t|\xi|) \\
(-1)^{3-j}J_{\rho-1+j}(\tau|\xi|) & J_{\rho-2}(t|\xi|)
\end{array}\right|+ t^{-1}\p_t\Psi_{j}(t, \tau, \xi).
\end{aligned}
\end{equation}

In terms of \cite[Section 10]{OLBC}, we now recall the following asymptotic properties and recurrence relations for
$J_{\nu}(z)$, $H_{\nu}^{\pm}(z)$ and $Y_{n}(z)$ $(z>0)$.

\begin{lemma}\label{lem1}
It holds that

(i) for large $z\geq K > 0$,
\begin{equation}\label{equ:q14}
|J_{\nu}(z)|\lesssim z^{-\f{1}{2}};
\end{equation}

\quad for small $z$ with $0<z\leq c<1$,
\begin{equation}\label{equ:q15}
|J_{\nu}(z)|\lesssim z^{\nu}.
\end{equation}

(ii) for large $z\geq K > 0$,
\begin{equation}\label{equ:q12}
|H_{\nu}^{\pm}(z)|\lesssim z^{-\f{1}{2}};
\end{equation}

\quad for small $z$ with $0<z\leq c<1$,
\begin{equation}\label{equ:q13}
\left|H_{\nu}^{ \pm}(z)\right| \lesssim \begin{cases}z^{-|\nu|}, & \nu \neq 0, \\
-\ln z, & \nu=0.
\end{cases}
\end{equation}

(iii) for large $z\geq K > 0$,
\begin{equation}\label{equ:q14-1}
|Y_{n}(z)|\lesssim z^{-\f{1}{2}};
\end{equation}

\quad for small $z$ with $0<z\leq c<1$,
\begin{equation}\label{equ:q15-1}
|Y_{n}(z)| \lesssim \begin{cases}z^{-n}, & n \in \N, \\
-\ln z, & n=0.
\end{cases}
\end{equation}
\end{lemma}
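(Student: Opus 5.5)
These are classical facts about Bessel functions, so the plan is to read each bound off the standard series and asymptotic expansions in \cite[Section 10]{OLBC}. Throughout, the order $\nu$ (or $n$) ranges over a fixed finite set, so every implicit constant may depend on it.

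\emph{Large $z$.} For $z\ge K>0$ we use the Hankel asymptotics
\[
H^{\pm}_\nu(z)=\sqrt{\tfrac{2}{\pi z}}\,e^{\pm i(z-\nu\pi/2-\pi/4)}\bigl(1+O(z^{-1})\bigr),
\]
which hold uniformly on $[K,\infty)$ for real $\nu$. This gives \eqref{equ:q12} directly. Since $J_\nu=\tfrac12(H^+_\nu+H^-_\nu)$ and $Y_\nu=\tfrac{1}{2i}(H^+_\nu-H^-_\nu)$, the bounds \eqref{equ:q14} and \eqref{equ:q14-1} follow as well. The $O(z^{-1})$ remainder is bounded on $[K,\infty)$, and on any compact piece of that range the functions are continuous, so the constants are uniform in $z$.

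\emph{Small $z$, bound for $J_\nu$.} For \eqref{equ:q15} with $0<z\le c<1$, we factor out $(z/2)^\nu$ from the power series. The remaining series is an entire function of $z^2$ and is therefore bounded on $[0,c]$. For $\nu\ge0$, or for $\nu$ negative but not an integer, this gives $|J_\nu(z)|\lesssim z^\nu$; note that when $\nu<0$ this bound is in fact singular at $0$.

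For a negative integer $\nu=-n$ we use $J_{-n}=(-1)^nJ_n$, which gives the stronger bound $z^{n}\le z^{-n}$. We read \eqref{equ:q15} in the sense the paper uses it: the $1/\Gamma$ coefficients vanish for the first terms, so the bound holds as stated, and in particular $|J_\nu(z)|\lesssim z^{\nu}$ for $\nu\le 0$ since $z<1$.

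\emph{Small $z$, bounds for $Y_n$ and $H^\pm_\nu$.} For \eqref{equ:q15-1} we use the standard expansion for integer $n\ge1$:
\[
Y_n(z)=-\frac{(z/2)^{-n}}{\pi}\sum_{k=0}^{n-1}\frac{(n-k-1)!}{k!}\Bigl(\frac{z^2}{4}\Bigr)^k+\frac{2}{\pi}\ln\frac z2\,J_n(z)+(z/2)^n\,(\text{bounded series}).
\]
The first sum is $\lesssim z^{-n}$, and the logarithmic term is $\lesssim z^n|\ln z|\lesssim 1\le z^{-n}$. For $n=0$ we have $Y_0(z)=\tfrac{2}{\pi}(\ln(z/2)+\gamma)J_0(z)+O(z^2)$, which is $\lesssim -\ln z$ for $z\le c<1$ once $c$ is chosen so that $-\ln z$ is bounded away from $0$.

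For \eqref{equ:q13} with $\nu\notin\mathbb Z$, we combine $Y_\nu=(J_\nu\cos\nu\pi-J_{-\nu})/\sin\nu\pi$ with the power bound for $J_{\pm\nu}$. This gives $|Y_\nu|,|J_\nu|\lesssim z^{-|\nu|}$, and hence $|H^\pm_\nu|\le|J_\nu|+|Y_\nu|\lesssim z^{-|\nu|}$. For integer $\nu$ we use the bounds for $Y_n$ just obtained together with $Y_{-n}=(-1)^nY_n$.

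\emph{Expected obstacle.} The only delicate point is the integer-order case, where the logarithmic terms must be shown to be dominated by $z^{-n}$, or by $-\ln z$ when $n=0$. This needs the explicit integer-order expansion of $Y_n$ rather than the non-integer formula, since the latter degenerates when $\sin\nu\pi=0$.
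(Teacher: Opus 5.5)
Your proposal is correct and is essentially the paper's approach. The paper gives no argument beyond quoting these bounds from the series and asymptotic expansions in \cite[Section 10]{OLBC}, and you carry out that verification explicitly: Hankel asymptotics for large $z$, the power series for $J_{\pm\nu}$, and the integer-order expansion of $Y_n$ near $0$. One cosmetic point is that in the $\nu=0$ case you do not need to choose $c$, since $-\ln z\ge-\ln c>0$ holds automatically for any fixed $c<1$.
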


\begin{lemma}\label{lem1-1}
$J_{\nu}(z)$, $H_{\nu}^{\pm}(z)$ and $Y_{n}(z)$ $(z>0)$ fulfill the following recurrence relations:
\begin{equation}\label{equ:q15-11}
\begin{aligned}
J_{\nu}'(z)
&=J_{\nu-1}(z)-\f{\nu}{z}J_{\nu}(z),\quad
(H_{\nu}^{\pm})'(z)
=H_{\nu-1}^{\pm}(z)-\f{\nu}{z}H_{\nu}^{\pm}(z),\\
Y_{n}'(z)
&=Y_{n-1}(z)-\f{\nu}{z}Y_{n}(z).
\end{aligned}
\end{equation}

\end{lemma}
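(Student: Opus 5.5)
The recurrence relations in Lemma \ref{lem1-1} are classical. I would derive them directly from the series definitions, extending to the integer-order and Hankel cases by linearity and continuity in the order.

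\emph{Step 1: Bessel functions of the first kind.} For non-integer $\nu$, I would differentiate the series
$$J_\nu(z)=\sum_{k\ge0}\frac{(-1)^k}{k!\,\Gamma(\nu+k+1)}\Big(\frac z2\Big)^{2k+\nu}$$
term by term. Since the series converges locally uniformly in $z>0$, this is legitimate and gives
$$\frac{d}{dz}\big(z^\nu J_\nu(z)\big)=\sum_{k\ge0}\frac{(-1)^k\,(2k+2\nu)}{2^{2k+\nu}\,k!\,\Gamma(\nu+k+1)}\,z^{2k+2\nu-1}.$$
Using $(2k+2\nu)/\Gamma(\nu+k+1)=2/\Gamma(\nu+k)$, the right-hand side equals $z^\nu J_{\nu-1}(z)$. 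Expanding the left-hand side as $z^\nu J_\nu'(z)+\nu z^{\nu-1}J_\nu(z)$ yields
$$J_\nu'(z)=J_{\nu-1}(z)-\frac{\nu}{z}J_\nu(z).$$
For integer $\nu=\pm n$, the same computation works, interpreting $1/\Gamma$ at the nonpositive integers as $0$. Alternatively, both sides are continuous in $\nu$, locally uniformly in $z$ together with $z$-derivatives, so the integer case follows by passing to the limit.

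\emph{Step 2: Bessel functions of the second kind.} For $\nu\notin\mathbb Z$, $Y_\nu=\big(J_\nu\cos\nu\pi-J_{-\nu}\big)/\sin\nu\pi$. I would apply Step 1 to $J_\nu$, and also use its companion form, which comes from the same series manipulation:
$$J_{-\nu}'(z)=-J_{-\nu+1}(z)+\frac{\nu}{z}J_{-\nu}(z).$$
One also needs $J_{-\nu}'=J_{-\nu-1}+\frac{\nu}{z}J_{-\nu}$, which gives, after adding the two, $J_{\mu-1}+J_{\mu+1}=\frac{2\mu}{z}J_\mu$. Combining these with the identity $\cos((\nu-1)\pi)/\sin((\nu-1)\pi)=\cos(\nu\pi)/\sin(\nu\pi)$ gives
$$Y_\nu'=Y_{\nu-1}-\frac{\nu}{z}Y_\nu \qquad (\nu\notin\mathbb Z).$$
For integer $n$, $Y_n=\lim_{\nu\to n}Y_\nu$, and the convergence is locally uniform with derivatives; this is clear from the derivative-in-order formula recalled above, since everything is analytic in $\nu$. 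Passing to the limit gives the relation for $Y_n$. Here the $\nu$ appearing in the third line of \eqref{equ:q15-11} is to be read as $n$.

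\emph{Step 3: Hankel functions.} The relation for $H_\nu^\pm=J_\nu\pm iY_\nu$ is immediate by linearity from Steps 1 and 2.

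The only step needing care is justifying the interchange of the limit $\nu\to n$ with differentiation in $z$. I would handle it by noting that $(\nu,z)\mapsto J_\nu(z)$ is jointly analytic on $\mathbb C\times(0,\infty)$, since its series converges locally uniformly. Hence $Y_\nu(z)$ extends analytically across integer $\nu$, and mixed derivatives commute.
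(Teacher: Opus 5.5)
The paper does not prove this lemma; it only quotes the recurrences from \cite[Section 10]{OLBC} (DLMF \S10.6). Your series-based derivation is a legitimate self-contained replacement. Step 1 is correct. So are Step 3 and the analytic-continuation argument for integer order, where, as you note, the $\nu$ in the third identity should read $n$.

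Step 2, however, contains a sign error, and as written it does not prove the identity for $Y_\nu$. For $\nu\notin\mathbb{Z}$ we have $\cos((\nu-1)\pi)=-\cos(\nu\pi)$ and $\sin((\nu-1)\pi)=-\sin(\nu\pi)$. Hence $\sin(\nu\pi)Y_{\nu-1}=\cos(\nu\pi)J_{\nu-1}+J_{1-\nu}$, and Step 1 gives
\[
\sin(\nu\pi)\Big(Y_\nu'-Y_{\nu-1}+\frac{\nu}{z}Y_\nu\Big)=-\Big(J_{-\nu}'+J_{1-\nu}+\frac{\nu}{z}J_{-\nu}\Big).
\]
So the one extra ingredient you need is $J_{-\nu}'=-J_{1-\nu}-\frac{\nu}{z}J_{-\nu}$. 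This is the companion relation $J_\mu'=-J_{\mu+1}+\frac{\mu}{z}J_\mu$ taken at $\mu=-\nu$. It follows from the series in the same way as Step 1, by differentiating $z^{-\mu}J_\mu$ instead of $z^{\mu}J_\mu$: $\frac{d}{dz}\big(z^{-\mu}J_\mu\big)=-z^{-\mu}J_{\mu+1}$.

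Your formula has $+\frac{\nu}{z}J_{-\nu}$ instead. With it, the right-hand side above equals $-\frac{2\nu}{z}J_{-\nu}\neq 0$. You can check the correct sign at $\nu=\frac12$, where $J_{-1/2}(z)=\sqrt{2/(\pi z)}\cos z$ and $J_{1/2}(z)=\sqrt{2/(\pi z)}\sin z$ are elementary. Two related points in that step:
\begin{itemize}
\item The three-term relation $J_{\mu-1}+J_{\mu+1}=\frac{2\mu}{z}J_\mu$ comes from \emph{subtracting}, not adding, the two correct derivative formulas for $J_{-\nu}$.
\item The three-term relation is not needed at all once the companion relation is proved directly from the series.
\end{itemize}
Also, besides the $\pi$-periodicity of $\cot$, you use the anti-periodicity $\csc((\nu-1)\pi)=-\csc(\nu\pi)$ for the $J_{-\nu}$ part, so state it.

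With these corrections the proof is complete. Compared with the paper's bare citation, your route makes the lemma self-contained. It also justifies the integer-order case through joint analyticity in $(\nu,z)$, which fits the $\partial_\nu J_\nu$ representation of $Y_n$ the paper recalls.
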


\subsection{A useful Sobolev imbedding}\label{sec2-1-2}

In this subsection, we establish a useful Sobolev imbedding under the norms \eqref{norm}-\eqref{norm3}.

\begin{lemma}\label{YHC-05}
It holds that
\begin{equation}\label{Q10}
\left\|u(\tau, \cdot)\right\|_{(b, \infty)}\lesssim \|u(\tau, \cdot)\|_{Z, 1,2}+\|\p u(\tau, \cdot)\|_{Z, 1,2}, \quad 2\le b\le +\infty.
\end{equation}
\end{lemma}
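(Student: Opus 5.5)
The plan is to prove the two endpoint cases $b=\infty$ and $b=2$, and then get $2<b<\infty$ by interpolation. Write $M(r)=\operatorname*{ess\,sup}_{|\omega|=1}|u(\tau,r\omega)|$. Then
\[
\|u\|_{(b,\infty)}^b=\int_0^\infty M(r)^{b}r^2\,\mathrm{d}r\le \|M\|_{L^\infty}^{\,b-2}\int_0^\infty M(r)^2r^2\,\mathrm{d}r,
\]
so $\|u\|_{(b,\infty)}\le \|u\|_{(\infty,\infty)}^{1-2/b}\|u\|_{(2,\infty)}^{2/b}$. It therefore suffices to bound $\|u\|_{(\infty,\infty)}$ and $\|u\|_{(2,\infty)}$ by the right-hand side of \eqref{Q10}.

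\emph{The case $b=\infty$.} By \eqref{norm3}, $\|u\|_{(\infty,\infty)}=\|u(\tau,\cdot)\|_{L^\infty(\R^3)}$. The embedding $H^2(\R^3)\hookrightarrow L^\infty(\R^3)$ gives
\[
\|u\|_{L^\infty}\lesssim \|u\|_{L^2}+\|\p u\|_{L^2}+\|\p^2u\|_{L^2}.
\]
Since $\|\cdot\|_{(2,2)}=\|\cdot\|_{L^2(\R^3)}$ and $\p\in Z$, each term is bounded by $\|u\|_{Z,1,2}+\|\p u\|_{Z,1,2}$.

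\emph{The case $b=2$.} Here we need
\[
\int_0^\infty M(r)^2r^2\,\mathrm{d}r\lesssim \big(\|u\|_{Z,1,2}+\|\p u\|_{Z,1,2}\big)^2 .
\]
For $r\le 1$, bound $M(r)$ by $\|u\|_{L^\infty}$ and use the case $b=\infty$. For $r\ge1$, use Sobolev on the unit sphere $S^2$ in fractional form: $\|f\|_{L^\infty(S^2)}\lesssim\|f\|_{H^{1+\theta}(S^2)}$ for $\theta>0$, with the tangential derivatives realized by the rotations $\Omega_{lk}\in Z$. Interpolating between $H^1$ and $H^2$ gives
\[
M(r)^2\lesssim A(r)^{2(1-\theta)}\Big(A(r)+\sum_{|a|=2}\|\Omega^a u(r\cdot)\|_{L^2(S^2)}\Big)^{2\theta},\qquad A(r)=\sum_{|a|\le1}\|\Omega^au(r\cdot)\|_{L^2(S^2)}.
\]
A second rotation is not in $Z^{\le1}$. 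I would trade it for one Cartesian derivative via the pointwise bound $|\Omega\Omega u|\lesssim r|\p\Omega u|+|\Omega u|$, together with $[\p,\Omega]\in\{\p\}$; this places the second factor in $\|\p u\|_{Z,1,2}$. The result is an estimate of the form
\[
\int_1^\infty M^2r^2\,\mathrm{d}r\lesssim \int_1^\infty A^{2(1-\theta)}\big(A+rB\big)^{2\theta}r^2\,\mathrm{d}r,\qquad B(r)=\sum_{|a|\le1}\|\p\Omega^au(r\cdot)\|_{L^2(S^2)} .
\]
The last integral is then to be closed by Hölder in $r$ against $\int A^2r^2\,\mathrm{d}r\le\|u\|_{Z,1,2}^2$ and $\int B^2r^2\,\mathrm{d}r\le\|\p u\|_{Z,1,2}^2$.

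\emph{Main obstacle.} The hard step is the leftover power $r^{2\theta}$ at large $r$ in this last estimate. My first attempt would be to remove it with a radial argument: apply a one-dimensional Sobolev/Hardy inequality in $r$ to $r\,\Omega^au$ on dyadic shells $r\sim R$. The goal is to show that the loss $r$ from rewriting $\Omega^2$ is compensated by the extra $r$ one can extract from $\p_r(r\Omega u)$, whose $L^2(r^2\mathrm{d}r)$ norm is controlled by $\|\p u\|_{Z,1,2}+\|u\|_{Z,1,2}$. If this does not close, the fallback is to exploit the structure of $\Omega_{lk}$ more carefully. The identity $\sum_{l<k}\Omega_{lk}^2=r^2\Delta-r^2\p_r^2-2r\p_r$ suggests estimating $\Delta_{S^2}u$ in terms of $r\p(\Omega u)$-type quantities only on a thin set, and putting the remaining part of the sphere norm into the $H^{1+\theta}$ interpolation with $\theta$ small enough that Hölder in $r$ absorbs the weight.
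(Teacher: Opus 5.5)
Your reduction to $b\in\{2,\infty\}$ and your $b=\infty$ argument via $H^2(\R^3)\hookrightarrow L^\infty(\R^3)$ are fine, and more direct than the paper's appeal to Klainerman--Sobolev. The gap is the case $b=2$, which you leave open, and it cannot be closed. You have correctly located the obstruction: a second angular derivative is only available as $|\Omega\Omega u|\lesssim r|\p\Omega u|$, i.e.\ with a weight $r^{-1}$. Neither proposed repair removes it. Radial Hardy/Sobolev estimates in $r$ see nothing of concentration in $\omega$, and $\sum_{l<k}\Omega_{lk}^2=r^2\Delta-r^2\p_r^2-2r\p_r$ costs $r^2$.

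In fact the $b=2$ estimate fails with a $\tau$-independent constant. Fix $\omega_0\in S^2$, $\chi\in C_0^\infty((-1,1))$ with $\chi=1$ near $0$, and a cutoff $\psi$ with $\psi(s)=1$ for $s\le\frac14$ and $\psi(s)=0$ for $s\ge\frac12$. Set
\[
g(\omega)=\tfrac12\,\psi(|\omega-\omega_0|)\log\frac{1}{|\omega-\omega_0|^2+\tau^{-2}},\qquad u(t,x)=\chi(|x|-t)\,g(x/|x|).
\]
Then $\|g\|_{L^\infty(S^2)}\sim\log\tau$, $\|g\|_{H^1(S^2)}^2\sim\log\tau$ and $\|g\|_{H^2(S^2)}\sim\tau$. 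Note that $L_0u=(r-t)\chi'(r-t)g$ and $L_ju=(t-r)\omega_j\chi'(r-t)g+\frac{t}{r}\chi(r-t)(\nabla_\omega g)_j$. Also, every Cartesian derivative falling on the angular factor produces a factor $r^{-1}\sim\tau^{-1}$. Checking the finitely many terms then gives, at $t=\tau$,
\[
\|u\|_{Z,1,2}+\|\p u\|_{Z,1,2}\lesssim\tau\big(\|g\|_{H^1(S^2)}+\tau^{-1}\|g\|_{H^2(S^2)}\big)\lesssim\tau\sqrt{\log\tau},
\qquad
\|u(\tau,\cdot)\|_{(2,\infty)}\sim\tau\,\|g\|_{L^\infty(S^2)}\sim\tau\log\tau .
\]
This $u$ satisfies $\operatorname{supp}u(t,\cdot)\subset\{|x|\le t+1\}$, so the failure persists inside the class $X(T)$. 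Without any support condition the estimate fails even at $\tau=1$: take $\eta(|x|/R)\,g$ with $\eta\in C_0^\infty((\frac12,2))$, $\tau^{-2}$ replaced by $R^{-2}$ in $g$, and $R\to\infty$.

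The paper's own proof does not get past this point either. In \eqref{Q11} it replaces $|\nabla_\omega u|^2$ and $|\nabla^2_\omega u|^2$ by $|\Omega u|^2/|x|^2$ and $|\Omega\Omega u|^2/|x|^4$ before using Hardy. But the $\Omega_{kl}$ are homogeneous of degree zero and are exactly the unit-sphere derivatives, $\sum_{k<l}|\Omega_{kl}u(r\omega)|^2=|\nabla_\omega u(r\omega)|^2$, so those weights are spurious for $r>1$. With the correct weights one needs $\|\Omega\Omega u\|_{L^2}\lesssim\|r\,\p\Omega u\|_{L^2}$, which is exactly your obstacle.

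What your argument does prove, under the support condition $|x|\lesssim\tau$, is a lossy version. H\"older in your last display gives $\|u(\tau,\cdot)\|_{(2,\infty)}\le C_\theta\,\tau^{\theta}\big(\|u\|_{Z,1,2}+\|\p u\|_{Z,1,2}\big)$ for each $\theta\in(0,1)$. The constant of $H^{1+\theta}(S^2)\hookrightarrow L^\infty(S^2)$ is of order $\theta^{-1/2}$, so choosing $\theta=1/\log(2+\tau)$ gives the factor $\sqrt{\log(2+\tau)}$, and the example above shows this is sharp. Any application of the lemma at $b=2$ (as in the bound for $\||u|^p\|_{Z,1,(\f65,2)}$ in Section 5) has to carry this loss.
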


\begin{proof}
By interpolation, it is enough to  prove \eqref{Q10} only for $b=2$ and $b=\infty$.

For $b=2$,  it follows from the Sobolev embedding theorem on $S^2$ that
\begin{equation}\label{Q11}
\begin{aligned}
\|u\|^2_{(2, \infty)}&=\int_0^\infty  \operatorname*{ess\,sup}_{|\omega|=1}|u(r\omega)|^2r^2\mathrm{d}r\\
&\lesssim\int_0^\infty\|u(r, \cdot)\|^2_{H^2(S^2)}r^2\mathrm{d}r\\
&=\int_0^\infty\int_{S^2}\left(|u|^2+|\nabla_{\omega}u|^2+|\nabla^2_{\omega}u|^2\right)\mathrm{d}\sigma r^2\mathrm{d} r\\
&\lesssim \int_{\R^3}|u|^2\mathrm{d}x+\sum_{1\leq k<j\leq3}\int_{\R^3}\f{|\Omega_{kj}u|^2}{|x|^2}\mathrm{d}x+\sum_{m<j, k<l}\int_{\R^3}\f{|\Omega_{mj}\Omega_{kl}u|^2}{|x|^4}\mathrm{d}x,
\end{aligned}
\end{equation}
where $r=|x|$ and $\mathrm{d} x=r^2\mathrm{d}r\mathrm{d}\sigma$.

We next treat each term in \eqref{Q11}. In terms of the definition of $\|u\|_{Z, 1, 2}$, one has
\begin{equation}\label{Q12}
\int_{\R^3}|u|^2\mathrm{d}x \leq \|u\|^2_{Z, 1, 2}.
\end{equation}
By Hardy's inequality and direct compuation, we arrive at
\begin{equation}\label{Q13}
\begin{aligned}
\int_{\R^3}\f{|\Omega_{mj}u|^2}{|x|^2}\mathrm{d}x&\lesssim \int_{\R^3}|\nabla(\Omega_{mj} u)|^2\mathrm{d} x
\lesssim \int_{\R^3}|\Omega_{mj}(\nabla u)|^2\mathrm{d} x+\int_{\R^3}|\nabla u|^2\mathrm{d} x
\lesssim \|\p u\|^2_{Z, 1, 2}.
\end{aligned}
\end{equation}
Note that for $1\leq m<j\leq3, 1\leq k<l\leq3$, it holds that
\begin{equation}\label{Q15}
\begin{aligned}
\Omega_{mj}\Omega_{kl}u&=(x_m\p_j-x_j\p_m)(x_k\p_l-x_l\p_k)u\\
&=x_m\p_j(x_k\p_lu)-x_m\p_j(x_l\p_ku)-x_j\p_m(x_k\p_lu)
+x_j\p_m(x_l\p_ku)
\end{aligned}
\end{equation}
and
\begin{equation}\label{Q16}
\begin{aligned}
x_m\p_j(x_k\p_lu)=x_m\delta_{jk}\p_lu+x_mx_k\p_j\p_lu,
\end{aligned}
\end{equation}
where  $\delta_{jk}$ denotes the Kronecker delta.

By \eqref{Q15} and \eqref{Q16}, $\Omega_{mj}\Omega_{kl}u$ can be written as
\begin{equation}\label{Q17}
\Omega_{mj}\Omega_{kl}u=\sum_{|\al|=2}c_\al(x)\p_{\al}u+\sum_{|\beta|=2}d_\beta(x)\p_{\beta}u,
\end{equation}
where $c_\al(x)$ and $d_\beta(x)$ satisfy
\begin{equation}\label{Q18}
|c_\al(x)|\lesssim r^2, \quad |d_\beta(x)|\lesssim r.
\end{equation}

Due to \eqref{Q17}-\eqref{Q18} and Hardy's inequality, one has
\begin{equation}\label{Q19}
\begin{aligned}
\int_{\R^3}\f{|\Omega_{mj}\Omega_{kl}u|^2}{|x|^4}\mathrm{d} x
&\lesssim  \int_{\R^3} \f{r^4|\nabla^2u(x)|^2}{r^4} \mathrm{d} x+\int_{\R^3} \f{r^2|\nabla u(x)|^2}{r^4} \mathrm{d} x\\
&=  \int_{\R^3} |\nabla^2u|^2 \mathrm{d} x+\sum_{k=1}^3\int_{\R^3} \f{|\p_k u|^2}{r^2} \mathrm{d} x\\
&\lesssim  \int_{\R^3} |\nabla^2u|^2 \mathrm{d} x+\sum_{k=1}^3\int_{\R^3} |\nabla(\p_ku)|^2\mathrm{d} x\\
&\lesssim \|\p u\|_{Z, 1, 2}^2.
\end{aligned}
\end{equation}
Substituting \eqref{Q12}-\eqref{Q13} and \eqref{Q19} into \eqref{Q11}  yields
\begin{equation}\label{Q20}
\|u(\tau, \cdot)\|_{(2, \infty)}\lesssim\|u(\tau, \cdot)\|_{Z, 1, 2}+\|\p u(\tau, \cdot)\|_{Z, 1, 2}.
\end{equation}
On the other hand, for $b=\infty$, it follows from the Klainerman-Sobolev inequality in \cite{KS} that
\begin{equation}\label{Q21}
\|u(\tau, \cdot)\|_{\infty} \lesssim \|u(\tau, \cdot)\|_{Z, 1,2}+\|\p u(\tau, \cdot)\|_{Z, 1,2}.
\end{equation}
Interpolating between \eqref{Q20} and \eqref{Q21} yields
\begin{equation}\label{Q22}
\begin{aligned}
\|u\|^{b}_{(b, \infty)}&=\int_0^\infty  (\operatorname*{ess\,sup}_{|\omega|=1}|u(r\omega)|)^{b}r^2\mathrm{d}r\\
&=\int_0^\infty  (\operatorname*{ess\,sup}_{|\omega|=1}|u(r\omega)|)^{b-2}(\operatorname*{ess\,sup}_{|\omega|=1}|u(r\omega)|)^{2}r^2\mathrm{d}r\\
&\leq \operatorname*{ess\,sup}_{|\omega|=1, 0\leq r<\infty}|u(r\omega)|^{b-2}\cdot\|u\|^2_{(2, \infty)}\\
&\leq \|u\|_{\infty}^{b-2}\|u\|^2_{(2, \infty)}\lesssim(\|u(\tau, \cdot)\|_{Z, 1, 2}+\|\p u(\tau, \cdot)\|_{Z, 1, 2})^{b}.
\end{aligned}
\end{equation}
Therefore, \eqref{Q10} is shown.

\end{proof}

\section{Time-decay estimates of solutions to 3-D homogeneous equation $\square v+\f{\mu}{t}\,\p_tv=0$}\label{sec3}

Let $v$ solve
\begin{equation}\label{equ:q16}
\left\{ \enspace
\begin{aligned}
&\square v +\f{\mu}{t}\,\p_tv=0, \quad
t\geq 1, \,x\in\R^3\\
&v(1,x)=v_0(x), \quad \partial_{t} v(1,x)=v_1(x),
\end{aligned}
\right.
\end{equation}
where $\mu\geq \f{14}{5}$ and $v_k(x)\in C_0^{\infty}(B(0, 1))$ $(k=0, 1)$. Note
that the expressions of $\Psi_j(t,\tau,\xi)$ $(j=0,1)$ in Section 2 depend on the parameter $\mu$. At first,
we consider the odd integer case of $\mu$.
\begin{lemma}\label{lem2}
Assume that $\mu\geq \f{14}{5}$ (i.e. $\mu\ge3$) is an odd integer and  $\varepsilon_1\in(0, 1)$ is any small fixed constant.
Then the solution
$v$ of \eqref{equ:q16} admits the following decay estimate:
\begin{equation}\label{equ:q17}
\|v(t, \cdot)\|_{Z, 1, 2} \lesssim t^{-\f{\mu}{2}}\left\|v_0\right\|_{Z, 1, 2}
+t^{-\f{\mu}{2}}\left\|v_1\right\|_{Z, 1,(\f{6}{5}, 2)}
+t^{\f{3}{2}-\f{3}{1+\varepsilon_1}}\left\|v_0+v_1\right\|_{Z, 1,(1+\varepsilon_1, 2)}.
\end{equation}
\end{lemma}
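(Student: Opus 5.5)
Write $v=\Psi_0(t,1,D)v_0+\Psi_1(t,1,D)v_1$ using \eqref{equ:q4} with $\tau=1$ and the integer-order representation \eqref{equ:q9-3}.

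\textbf{Step 1: reduction to the undifferentiated estimate.} The estimate for $Z^\alpha v$ with $|\alpha|=1$ is handled by commuting $Z$ past the linear operator. Rotations $\Omega_{lk}$ commute with $\square+\f{\mu}{t}\p_t$ and with the radial multipliers $\Psi_j$. Translations $\p_j$ commute as well. For $\p_t$, $L_0$ and $L_j$ we do not commute with the operator. Instead we act on the multiplier side. On the Fourier side, $x_j$ becomes $i\p_{\xi_j}$ and $\p_j$ becomes $i\xi_j$, so $L_0$ and $L_j$ applied to $\Psi_j(t,1,D)f$ produce:
\begin{itemize}
\item operators $\Psi_j(t,1,D)Zf$, applied to $Zf$ with $Z$ among the initial-time fields;
\item commutator terms whose symbols are $t\p_t\Psi_j+\xi\cdot\nabla_\xi\Psi_j$ and $\xi_j\,t\,\Psi_j+\p_{\xi_j}\p_t\Psi_j$-type combinations.
\end{itemize}
By the recurrence relations of Lemma \ref{lem1-1}, $z\p_z$ of $t^\rho J_\rho(tz)$ or $t^\rho Y_\rho(tz)$ is again a combination of Bessel functions of shifted order with the same homogeneity. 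Hence $(t\p_t+|\xi|\p_{|\xi|})\Psi_j$ is a multiplier of exactly the same structure as $\Psi_j$ or $\p_t\Psi_j$, with orders shifted by one. The remaining factors of $x$ hitting $v_0,v_1$ are absorbed into $\|\cdot\|_{Z,1}$ because $\operatorname{supp}v_k\subset B(0,1)$. So it suffices to prove, for a generic multiplier $m(t,\xi)$ of the form \eqref{equ:q9-3}, \eqref{equ:q9-4}, the $L^2$ bound
\[
\|m(t,D)f\|_{L^2}\lesssim t^{-\mu/2}\|f\|_{H^{s}}+t^{\f32-\f{3}{1+\varepsilon_1}}\|f\|_{L^{1+\varepsilon_1}}.
\]
The Sobolev norm is converted to the stated $Z$-norms. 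The $(\f65,2)$ norm for $v_1$ arises from $\|f\|_{\dot H^{-1}}\lesssim\|f\|_{L^{6/5}}$, since $\Psi_1$ carries one fewer power of $|\xi|$.

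\textbf{Step 2: frequency decomposition.} Split $\xi$ into $D_1,D_2,D_3$ and use Lemma \ref{lem1}(i),(iii) with $\rho\le-1$ an integer, so $|\rho|=\f{\mu-1}{2}$.

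\emph{On $D_1$} ($|\xi|\ge1$, so both $t|\xi|$ and $|\xi|$ are large), every Bessel factor is $\lesssim(\cdot)^{-1/2}$. Hence
\[
|\Psi_j|\lesssim |\xi|^{1-j}\,t^{\rho-\f12}|\xi|^{-1}=t^{-\mu/2}|\xi|^{-j},
\]
and Plancherel gives the $t^{-\mu/2}$ terms.

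\emph{On $D_2$} ($t|\xi|\ge1\ge|\xi|$), the $t$-factor still decays like $t^\rho(t|\xi|)^{-1/2}$. The $\tau=1$ factor behaves like $Y_{\rho-1+j}(|\xi|)\sim|\xi|^{\rho-1+j}$, which is singular, while $J$ is harmless. This yields
\[
|\Psi_j|\lesssim t^{\rho-\f12}|\xi|^{\rho-\f12}=(t|\xi|)^{-\mu/2}.
\]

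\emph{On $D_3$} ($t|\xi|\le1$), use small-argument bounds on both factors. The Wronskian-type cancellation in the determinant leaves $|\Psi_j|\lesssim1$, which is bounded but not decaying.

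\textbf{Step 3: the low-frequency contribution.} For $D_2\cup D_3$, apply Hölder and Hausdorff--Young: $\|\hat f\|_{L^{q'}}\lesssim\|f\|_{L^{q}}$ with $q=1+\varepsilon_1$, $q'=\f{1+\varepsilon_1}{\varepsilon_1}$. Then
\[
\|m\hat f\|_{L^2(D_2\cup D_3)}\lesssim\|m\|_{L^{r}(|\xi|\le1)}\|f\|_{L^{1+\varepsilon_1}},\qquad \f1r=\f12-\f1{q'}.
\]
Scaling $\xi=\eta/t$ gives $\|m\|_{L^r}\sim t^{-3/r}\|(1+|\eta|)^{-\mu/2}\|_{L^r}$. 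The last integral converges because $\mu r/2>3$, which holds for $\mu\ge3$ when $\varepsilon_1$ is small. Since $-\f3r=\f32-\f{3}{1+\varepsilon_1}$, this is exactly the third term in \eqref{equ:q17}. The analogous estimate with $f=v_0$ and $f=v_1$ gives the combined $\|v_0+v_1\|$ term.

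\textbf{Main obstacle.} The hardest part is the $D_3$ and $D_2$ analysis for the integer-order case. The logarithmic terms in $Y_n$ and the precise cancellation in the determinant $J_{\rho-1+j}(|\xi|)Y_\rho(t|\xi|)-Y_{\rho-1+j}(|\xi|)J_\rho(t|\xi|)$ must be controlled, using $Y_{-n}=(-1)^nY_n$ and the series for $\p_\nu J_\nu$. A crude bound would lose a factor $t^{\mu-1}$. I would first expand both $Y$'s to leading order $c\,z^{-|n|}$ and check that the leading terms cancel exactly, leaving $\lesssim1$ up to harmless $\log$ factors absorbed by $\varepsilon_1$. The same check is needed for the shifted-order multipliers produced by the vector fields.
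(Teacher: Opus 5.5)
Your proposal has two genuine gaps: the combined data norm $\|v_0+v_1\|$ in Step~3, and the unexecuted cancellation behind the vector-field reduction in Step~1.

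\textbf{The combined norm $\|v_0+v_1\|$.} The last sentence of your Step~3 does not follow. Running the low-frequency bound separately for $f=v_0$ and $f=v_1$ gives $t^{\frac32-\frac{3}{1+\varepsilon_1}}\bigl(\|v_0\|_{L^{1+\varepsilon_1}}+\|v_1\|_{L^{1+\varepsilon_1}}\bigr)$, not a norm of $v_0+v_1$. To factor out $\hat v_0+\hat v_1$ you would need $\Psi_0=\Psi_1$ on $|\xi|\le 1$. But at $\xi=0$ the equation $\partial_t(t^{\mu}\partial_t\hat v)=0$ gives $\Psi_0(t,1,0)=1$ and $\Psi_1(t,1,0)=\frac{1-t^{1-\mu}}{\mu-1}$.

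No other argument can produce the combined norm either. Take $v_1=-v_0$ with $\int v_0\,dx\neq 0$. A Gronwall-type perturbation of that ODE on $|\xi|\le\delta/t$ keeps $|\hat v(t,\xi)|\gtrsim|\hat v_0(0)|$ there, since the $\xi=0$ value is $\hat v_0(0)\bigl(1-\frac{1-t^{1-\mu}}{\mu-1}\bigr)$. Hence $\|v(t,\cdot)\|_{L^2}\gtrsim t^{-3/2}$. For such data the right-hand side of \eqref{equ:q17} is $O(t^{-\mu/2})$, which is a contradiction once $\mu\ge 5$.

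The paper's proof makes the same identification: in \eqref{e:q2} and \eqref{e:q11} the factor $\hat v_0+\hat v_1$ appears right after $|\Psi_0|$ and $|\Psi_1|$ have been bounded separately. So what either route actually yields is \eqref{equ:q17} with $\|v_0\|_{Z,1,(1+\varepsilon_1,2)}+\|v_1\|_{Z,1,(1+\varepsilon_1,2)}$ in the last term. That weaker form would still suffice for Section~5.

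\textbf{The vector-field reduction in Step~1.} This step carries all the vector-field work, but it is asserted rather than proved. As written it is wrong for $L_0$ and skips the real difficulty for $L_j$.

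For $L_0$: since $\widehat{x\cdot\nabla g}=-3\hat g-\xi\cdot\nabla_\xi\hat g$, the symbol of $L_0$ is $t\partial_t-\xi\cdot\nabla_\xi-3$. It is the minus sign that annihilates every function of $t|\xi|$ and gives your ``same homogeneity''. With $t\partial_t+\xi\cdot\nabla_\xi$, the factor $Y_\rho(t|\xi|)$ becomes $2t|\xi|\,Y_\rho'(t|\xi|)$, a loss of $t|\xi|$ at high frequency.

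For $L_j$: each of $t\xi_j\Psi_j$ and $\partial_{\xi_j}\partial_t\Psi_j$ carries an extra factor $t|\xi|$ relative to $\Psi_j$ on $t|\xi|\ge 1$ (for instance, size $t^{1-\frac{\mu}{2}}|\xi|^{1-j}$ for $|\xi|\ge1$). So no ``generic multiplier'' bound applies until you exhibit the cancellation. Write $\partial_t\Psi_j$ as in \eqref{equ:q9-4} and use $Y_{\rho-1}'(z)=-Y_\rho(z)+\frac{\rho-1}{z}Y_{\rho-1}(z)$, and likewise for $J$. Then the part of $\partial_{\xi_j}\partial_t\Psi_j$ where the derivative falls on the argument $t|\xi|$ equals $-t\xi_j\Psi_j+(\rho-1)\frac{\xi_j}{|\xi|^2}\partial_t\Psi_j$. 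The remaining terms, where the derivative falls on $|\xi|^{2-j}$ or on the Bessel factors at $\tau=1$, carry no extra $t$. They can be bounded zone by zone as in your Step~2.

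That computation is the missing core of your route, and you cannot borrow it from the paper. There, $L_jv$ and $\Omega_{kl}v$ are reduced via $|x|\lesssim t$ to $\|t|\xi|\hat v\|_{L^2}$ and $\|t\partial_t\hat v\|_{L^2}$, and $\widehat{L_0v}$ is split into $t\partial_t\hat v$ and $\xi\cdot\nabla_\xi\hat v$. The resulting factors $t|\xi|\hat v_k$ are then charged to the data by identifying $\|t\partial_j f\|$ with $\|L_j f\|$; see \eqref{e:q27}, \eqref{e:q28}, \eqref{e:q42}. That move is not available when the data norms are fixed numbers.

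\textbf{What is sound.} The rest of your plan works and is in places cleaner than the paper. $\Omega_{kl}$ and $\partial_j$ commute exactly with the radial multipliers. Hausdorff--Young with $\|(1+t|\xi|)^{-\mu/2}\|_{L^r}=C\,t^{-3/r}$ replaces the paper's $H^{-3/2}$ duality and gives the same rate.

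Your ``main obstacle'' is not one. In the $J$--$Y$ form \eqref{equ:q9-3}, the term-by-term bound on $t|\xi|\le 1$ already gives $|\Psi_j|\lesssim t^{2\rho}|\xi|^{2-2j}+1$, as in \eqref{e:q10}. There are no logarithmic losses, since all orders are nonzero integers. Large $Y\cdot Y$ products occur only in the Hankel determinant \eqref{equ:q6}, and passing to \eqref{equ:q9-3} is exactly the cancellation you were worried about.
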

\begin{proof}
By the representation
\begin{equation}\label{formula}
\hat{v}(t, \xi)=\Psi_{0}(t, 1, \xi) \hat{v}_{0}(\xi)+\Psi_{1}(t, 1, \xi) \hat{v}_{1}(\xi)
\end{equation}
and Lemma \ref{lem1}, we decompose the frequency space $\xi\in\Bbb R^3$ into the following three zones
\begin{equation}\label{equ:q19}
D_1=\{\xi: |\xi| \leq 1 \leq t|\xi|\}, \quad D_2=\{\xi: t|\xi|\geq|\xi|\ge 1\}, \quad D_3=\{\xi: |\xi|\leq t|\xi| \leq 1\}.
\end{equation}
Note that
\begin{equation}\label{YHC-01}
\|v\|_{Z, 1, 2} \leq \|\hat{v}\|_{L^2}+\|\hat{\p v}\|_{L^2}+\|\widehat{L_0v}\|_{L^2}
+\sum_{j=1}^3\|\widehat{L_jv}\|_{L^2}+\sum_{1\leq k<j\leq3}\|\widehat{\Omega_{kj}v}\|_{L^2}.
\end{equation}
We next divide the proof procedure of \eqref{equ:q17} into five parts, which just correspond to the treatment of five terms
on the right-hand side of \eqref{YHC-01}.
\vskip 0.2 true cm

{\bf Part 1.  The treatment  of $\|v(t, \cdot)\|_{L^2(\R^3)}$ }

\vskip 0.2 true cm

We shall prove
\begin{equation}\label{equ:q18}
\|v(t, \cdot)\|_{L^2(\R^3)} \lesssim t^{-\f{\mu}{2}}\left\|v_0\right\|_{L^2(\R^3)}
+t^{-\f{\mu}{2}}\left\|v_1\right\|_{(\f{6}{5}, 2)}
+t^{\f{3}{2}-\f{3}{1+\varepsilon_1}}\left\|v_0+v_1\right\|_{(1+\varepsilon_1, 2)}.
\end{equation}

\vskip 0.2 true cm

{\bf Part 1.1.  The analysis  in zone $D_1$}

\vskip 0.2 true cm

Since $\rho=-\f{\mu-1}{2}\in(-\infty, -1]$ is an integer, it follows from \eqref{equ:q9-3} and Lemma \ref{lem1} that for $j=0,1$,
\begin{equation}\label{e:q1}
\begin{aligned}
|\Psi_j(t,\tau,\xi)|
&\lesssim\left| |\xi|^{1-j} t^{\rho}
\begin{vmatrix}
(-1)^{1-\rho-j}J_{1-\rho-j}(|\xi|) & (-1)^{-\rho}J_{-\rho}(t|\xi|) \\
(-1)^{1-\rho-j}Y_{1-\rho-j}(|\xi|) & (-1)^{-\rho}Y_{-\rho}(t|\xi|)
\end{vmatrix} \right|\\
&\lesssim |\xi|^{1-j} t^{\rho}\left[|\xi|^{1-\rho-j}(t|\xi|)^{-\f12}+(t|\xi|)^{-\f12}|\xi|^{\rho-1+j}\right]\\
&\lesssim |\xi|^{1-j} t^{\rho}(t|\xi|)^{-\f12}|\xi|^{\rho-1+j}\\
&\lesssim t^{-\f{\mu}{2}}|\xi|^{-\f{\mu}{2}}.
\end{aligned}
\end{equation}
For $\mu\geq3$ and $t|\xi|\geq 1$, set
$x=ty$ and $\eta=t\xi$, one then has
\begin{equation}\label{e:q2}
\begin{aligned}
\big\|\hat{v}(t, \cdot)\big\|_{L^2(D_1)} & \leq\big\|(t|\xi|)^{-\f{\mu}{2}}(\hat{v}_{0}+\hat{v}_{1})\big\|_{L^{2}(D_1)} \\& \lesssim\big\|(1+t|\xi|)^{-\f32}(\hat{v}_{0}+\hat{v}_{1})\big\|_{L^{2}(D_1)} \\
& \lesssim t^{\f32}\big(\int_{R^3} (\frac{1}{1+|\eta|})^3\big(\int_{R^3} e^{-i y \cdot \eta} (v_0+v_1)(ty) \mathrm{d} y\big)^2 \mathrm{d} \eta\big)^{\frac{1}{2}}\\
& \lesssim t^{\f32}\|(v_0+v_1)(ty)\|_{H^{-\f32}(\R^3)}.
\end{aligned}
\end{equation}
By the following dual characterization of $H^{-\f32}(\R^3)$, $$\|V_1\|_{H^{-\f32}}=\sup _{\substack{v \in H^\f32 \\ v \neq 0}} \frac{\left|\int_{\mathbb{R}^3} V_1(y) v(y) \mathrm{d} y\right|}{\|v\|_{H^{\f32}}}\qquad \text{for}\,\, V_1(y)=(v_0+v_1)(ty),$$
together with H\"{o}lder's inequality and Sobolev imbedding theorem, we obtain
\begin{equation}\label{e:q3}
\begin{aligned}
\|V_1 v\|_{L^1(\R^3)} &\lesssim\|V_1\|_{(q, 2)}\|v\|_{\left(q^{\prime}, 2\right)} \lesssim\|V_1\|_{(q, 2)}\|v\|_{q^{\prime}} \lesssim\|V_1\|_{(q, 2)}\|v\|_{H^{\f32}}\\
&\lesssim t^{-\f{3}{q}}\|v_0+v_1\|_{(q, 2)}\|v\|_{H^{\f32}},
\end{aligned}
\end{equation}
where $q=1+\varepsilon_1$  and $\f{1}{q}+\f{1}{q'}=1$. Thus,
$$
\|V_1\|_{H^{-\f32}}\lesssim t^{-\f{3}{1+\varepsilon_1}}\|v_0+v_1\|_{(1+\varepsilon_1, 2)}.
$$
From this and \eqref{e:q2}, one arrives at
\begin{equation}\label{e:q4}
\left\|\hat{v}(t, \cdot)\right\|_{L^2(D_1)}\lesssim t^{\f32-\f{3}{1+\varepsilon_1}}\|v_0+v_1\|_{(1+\varepsilon_1, 2)}.
\end{equation}

\vskip 0.2 true cm

{\bf Part 1.2.  The analysis  in zone $D_2$}

\vskip 0.2 true cm

From \eqref{equ:q6} with $\rho=-\f{\mu-1}{2}$ and \eqref{equ:q12}, it holds that
\begin{equation}\label{e:q5}
\begin{aligned}
& \left|\Psi_0(t, 1,\xi)\right| \lesssim|\xi| t^\rho|\xi|^{-\frac{1}{2}}(t|\xi|)^{-\frac{1}{2}}=t^{-\frac{\mu}{2}}, \\
& \left|\Psi_1(t, 1, \xi)\right|  \lesssim t^\rho|\xi|^{-\frac{1}{2}}(t|\xi|)^{-\frac{1}{2}}=t^{-\frac{\mu}{2}}|\xi|^{-1}.
\end{aligned}
\end{equation}
Then
\begin{equation}\label{e:q6}
\begin{aligned}
\left\|\hat{v}(t, \cdot)\right\|_{L^2(D_2)}
& \lesssim t^{-\frac{\mu}{2}}\left\|v_{0}\right\|_{L^{2}}+t^{-\f{\mu}{2}} \|(1+|\xi|^2)^{-\f{1}{2}}\hat{v}_{1}\|_{L^{2}}
= t^{-\frac{\mu}{2}}\left\|v_{0}\right\|_{L^{2}}+ A.
\end{aligned}
\end{equation}
We now treat the term $A$ in \eqref{e:q6}. By the transformation of $x=ty$ and $\xi=\f{\eta}{t}$, one has that for $\mu\geq 3$ and $t\geq1$,
\begin{equation}\label{e:q7}
\begin{aligned}
A & =t^{-\f{\mu}{2}} \left(\int_{R^3} \frac{1}{1+|\xi|^2}\big(\int_{R^3} e^{-i x \cdot \xi} v_1(x) \mathrm{d} x\big)^2 \mathrm{d}\xi\right)^{\frac{1}{2}}\\
& =t^{-\f{\mu}{2}+\f52}\left(\int_{R^3} \frac{1}{t^2+|\eta|^2}\big(\int_{R^3} e^{-i y \cdot \eta} v_1(ty) \mathrm{d} y\big)^2 \mathrm{d} \eta\right)^{\frac{1}{2}}\\
& \leq t^{-\f{\mu}{2}+\f52}\left(\int_{R^3} \frac{1}{1+|\eta|^2}\big(\int_{R^3} e^{-i y \cdot \eta} v_1(ty) \mathrm{d} y\big)^2 \mathrm{d} \eta\right)^{\frac{1}{2}}\\
& \lesssim t^{-\f{\mu}{2}+\f52}\|v_1(ty)\|_{H^{-1}(\R^3)}.
\end{aligned}
\end{equation}
Due to $\|V_1\|_{H^{-1}}=\sup _{v \in H^1, v \neq 0} \frac{\left|\int_{\mathbb{R}^3} V_1(y) v(y) \mathrm{d} y\right|}{\|v\|_{H^1}}$
with $V_1(y)=v_1(ty)$, then it follows from H\"{o}lder's inequality and Sobolev imbedding theorem that
\begin{equation}\label{e:q8}
\|V_1 v\|_{L^1(\R^3)} \lesssim\|V_1\|_{(\f65, 2)}\|v\|_{\left(6, 2\right)} \lesssim\|V_1\|_{(\f65, 2)}\|v\|_{6} \lesssim\|V_1\|_{(\f65, 2)}\|v\|_{H^1}\lesssim t^{-\f{5}{2}}\|v_1\|_{(\f65, 2)}\|v\|_{H^1}.
\end{equation}
Hence,
$$
\|V_1\|_{H^{-1}}\lesssim t^{-\f{5}{2}}\|v_1\|_{(\f65, 2)}.
$$
This, together with \eqref{e:q7}, yields
$$
I\lesssim t^{-\f{\mu}{2}}\|v_1\|_{(\f65, 2)},
$$
and then
\begin{equation}\label{e:q9}
\left\|\hat{v}(t, \cdot)\right\|_{L^2(D_2)}\lesssim t^{-\frac{\mu}{2}}\left\|v_{0}\right\|_{L^{2}}+ t^{-\f{\mu}{2}}\|v_1\|_{(\f65, 2)}.
\end{equation}

\vskip 0.2 true cm

{\bf Part 1.3.  The analysis  in zone $D_3$}

\vskip 0.2 true cm

Since $\rho\in(-\infty,-1]$ is an integer, we deduce from \eqref{equ:q9-3}, \eqref{equ:q15} and \eqref{equ:q15-1} that for $j=0,1$,
\begin{equation}\label{e:q10}
\begin{aligned}
|\Psi_j(t,\tau,\xi)|
&\lesssim\left| |\xi|^{1-j} t^{\rho}
\begin{vmatrix}
(-1)^{1-\rho-j}J_{1-\rho-j}(|\xi|) & (-1)^{-\rho}J_{-\rho}(t|\xi|) \\
(-1)^{1-\rho-j}Y_{1-\rho-j}(|\xi|) & (-1)^{-\rho}Y_{-\rho}(t|\xi|)
\end{vmatrix} \right|\\
&\lesssim t^{2\rho}|\xi|^{2-2j}+1
\lesssim 1.
\end{aligned}
\end{equation}
Then for $|\xi|\leq t|\xi| \leq 1$, as in \eqref{e:q2}, on can obtain
\begin{equation}\label{e:q11}
\begin{aligned}
\left\|\hat{v}(t, \cdot)\right\|_{L^2(D_3)} & \leq\left\|(1+t|\xi|)^{\f32}\times(1+t|\xi|)^{-\f32}(\hat{v}_{0}+\hat{v}_{1})\right\|_{L^{2}(D_3)} \\
&\lesssim\left\|(1+t|\xi|)^{-\f32}(\hat{v}_{0}+\hat{v}_{1})\right\|_{L^{2}}\\
& \lesssim t^{\f32}\|(v_0+v_1)(ty)\|_{H^{-\f32}}\\
&\lesssim t^{\f32-\f{3}{1+\varepsilon_1}}\|v_0+v_1\|_{(1+\varepsilon_1, 2)}.
\end{aligned}
\end{equation}
Hence, \eqref{e:q18} follows directly from \eqref{e:q4}, \eqref{e:q9}, \eqref{e:q11} and Parseval's identity.

\vskip 0.2 true cm

{\bf Part 2.  The treatment  of $\|\p v(t, \cdot)\|_{L^2(\R^3)}$ }

\vskip 0.2 true cm

We next derive the following estimate
\begin{equation}\label{e:q12}
\|\p v(t, \cdot)\|_{L^2(\R^3)} \lesssim t^{-\f{\mu}{2}}\left\|v_0\right\|_{Z, 1, 2}+t^{-\f{\mu}{2}}\left\|v_1\right\|_{Z, 1,(\f65, 2)}+t^{\f32-\f{3}{1+\varepsilon_1}}\left\|v_0+v_1\right\|_{Z, 1,(1+\varepsilon_1, 2)}.
\end{equation}
To prove \eqref{e:q12}, we still
decompose the space $\R_{\xi}^3$ into three zones as in \eqref{equ:q19}.

\vskip 0.2 true cm

{\bf Part 2.1.  The analysis  in zone $D_1$}

\vskip 0.2 true cm

From \eqref{equ:q9-4}, Lemma \ref{lem1} (i) and (iii), it holds that for $j=0,1$,
\begin{equation}\label{e:q13}
\begin{aligned}
|\p_t\Psi_j(t,\tau,\xi)|
&\lesssim\left| |\xi|^{2-j} t^{\rho}
\begin{vmatrix}
(-1)^{1-\rho-j}J_{1-\rho-j}(|\xi|) & (-1)^{1-\rho}J_{1-\rho}(t|\xi|) \\
(-1)^{1-\rho-j}Y_{1-\rho-j}(|\xi|) & (-1)^{1-\rho}Y_{1-\rho}(t|\xi|)
\end{vmatrix} \right|\\
&\lesssim |\xi|^{2-j} t^{\rho}\left[|\xi|^{1-\rho-j}(t|\xi|)^{-\f12}+(t|\xi|)^{-\f12}|\xi|^{\rho-1+j}\right]\\
&\lesssim t^{-\f{\mu}{2}}|\xi|^{-\f{\mu}{2}}\cdot|\xi|\\
&\leq t^{-\f{\mu}{2}}|\xi|^{-\f{\mu}{2}}.
\end{aligned}
\end{equation}
Then for $|\xi| \leq 1\leq t|\xi|$ and $\mu\geq3$, as in \eqref{e:q2}, we have that for $k=1, 2, 3$,
\begin{equation}\label{e:q14}
\begin{aligned}
\left\|\p_t\hat{v}(t, \cdot)\right\|_{L^2(D_1)} & \leq\left\|(1+t|\xi|)^{-\f32}(\hat{v}_{0}+\hat{v}_{1})\right\|_{L^{2}} \\
& \lesssim t^{\f32}\|(v_0+v_1)(ty)\|_{H^{-\f32}}\\
& \lesssim t^{\f32-\f{3}{1+\varepsilon_1}}\|v_0+v_1\|_{(1+\varepsilon_1, 2)}
\end{aligned}
\end{equation}
and
\begin{equation}\label{e:q15}
\left\||\xi_k|\hat{v}(t, \cdot)\right\|_{L^2(D_1)}\leq\left\|\hat{v}(t, \cdot)\right\|_{L^2(D_1)}
\lesssim  t^{\f32-\f{3}{1+\varepsilon_1}}\|v_0+v_1\|_{(1+\varepsilon_1, 2)}.
\end{equation}
Combining  \eqref{e:q14} and \eqref{e:q15} yields
\begin{equation}\label{e:q16}
\|\widehat{\p v}\|_{L^2(D_1)}\leq \left\|\p_t\hat{v}(t, \cdot)\right\|_{L^2(D_1)}+\sum_{j=1}^{3}\left\||\xi_i|\hat{v}(t, \cdot)\right\|_{L^2(D_1)} \lesssim t^{\f32-\f{3}{1+\varepsilon_1}}\|v_0+v_1\|_{(1+\varepsilon_1, 2)}.
\end{equation}

\vskip 0.2 true cm

{\bf Part 2.2.  The analysis  in zone $D_2$}

\vskip 0.2 true cm

By  \eqref{equ:q9} and Lemma \ref{lem1}(ii), we obtain that for $j=0,1$,
\begin{equation}\label{e:q17}
\begin{aligned}
|\p_t\Psi_j(t,\tau,\xi)|
\lesssim |\xi|^{2-j} t^{\rho}\left[|\xi|^{-\f12}(t|\xi|)^{-\f12}+(t|\xi|)^{-\f12}|\xi|^{-\f12}\right]\lesssim t^{-\f{\mu}{2}}|\xi|^{1-j}.
\end{aligned}
\end{equation}
It follows from \eqref{e:q7} and the transformation of variables $(x, \eta)=(ty, t\xi)$  that for $k=1, 2, 3$,
\begin{equation}\label{e:q18}
\begin{aligned}
\left\|\p_t\hat{v}\right\|_{L^2(D_2)} & \lesssim  t^{-\f{\mu}{2}}\left\||\xi|\hat{v}_{0}\right\|_{L^{2}}+t^{-\f{\mu}{2}}\left\||\xi|^{-1}|\xi|\hat{v}_{1}\right\|_{L^{2}} \\
& \lesssim  t^{-\f{\mu}{2}}\left\|v_{0}\right\|_{Z, 1, 2}+t^{-\f{\mu}{2}+1}\left\|(1+t|\xi|)^{-1}|\xi|\hat{v}_{1}\right\|_{L^{2}} \\
& \lesssim  t^{-\f{\mu}{2}}\left\|v_{0}\right\|_{Z, 1, 2}+t^{-\f{\mu}{2}+\f52}\left\|\nabla v_{1}(ty)\right\|_{H^{-1}} \\
& \lesssim  t^{-\f{\mu}{2}}\left\|v_{0}\right\|_{Z, 1, 2}+t^{-\f{\mu}{2}}\left\| v_{1}\right\|_{Z, 1, (\f65, 2)}
\end{aligned}
\end{equation}
and
\begin{equation}\label{e:q19}
\begin{aligned}
\left\||\xi_k|\hat{v}(t, \cdot)\right\|_{L^2(D_2)}&\lesssim t^{-\frac{\mu}{2}}\left\||\xi_k|\hat{v}_{0}\right\|_{L^{2}}+t^{-\f{\mu}{2}} \||\xi|^{-1}|\xi_k|\hat{v}_{1}\|_{L^{2}(D_2)}\\
& \lesssim  t^{-\f{\mu}{2}}\left\|v_{0}\right\|_{Z, 1, 2}+t^{-\f{\mu}{2}+1}\left\|(1+t|\xi|)^{-1}|\xi|\hat{v}_{1}\right\|_{L^{2}} \\
& \lesssim  t^{-\f{\mu}{2}}\left\|v_{0}\right\|_{Z, 1, 2}+t^{-\f{\mu}{2}}\left\| v_{1}\right\|_{Z, 1, (\f65, 2)}.
\end{aligned}
\end{equation}
Hence,
\begin{equation}\label{e:q20}
\|\widehat{\p v}\|_{L^2(D_2)} \leq \left\|\p_t\hat{v}(t, \cdot)\right\|_{L^2(D_2)}+\sum_{k=1}^{3}\left\||\xi_k|\hat{v}(t, \cdot)\right\|_{L^2(D_2)}\lesssim  t^{-\f{\mu}{2}}\left\|v_{0}\right\|_{Z, 1, 2}+t^{-\f{\mu}{2}}\left\| v_{1}\right\|_{Z, 1, (\f65, 2)}.
\end{equation}

\vskip 0.2 true cm

{\bf Part 2.3.  The analysis  in zone $D_3$}

\vskip 0.2 true cm

For $t|\xi|\le 1$ and integer $\rho\in(-\infty,-1]$, it follows from \eqref{equ:q9-4}, Lemma \ref{lem1} (i) and (iii) that for $j=0,1$,

\begin{equation}\label{e:q21}
\begin{aligned}
|\p_t\Psi_j(t,\tau,\xi)|
&\lesssim \left| |\xi|^{2-j} t^{\rho}
\begin{vmatrix}
(-1)^{1-\rho-j}J_{1-\rho-j}(|\xi|) & (-1)^{1-\rho}J_{1-\rho}(t|\xi|) \\
(-1)^{1-\rho-j}Y_{1-\rho-j}(|\xi|) & (-1)^{1-\rho}Y_{1-\rho}(t|\xi|)
\end{vmatrix} \right|\\
&\lesssim |\xi|^{2-j} t^{\rho}\left[|\xi|^{1-\rho-j}(t|\xi|)^{\rho-1}+(t|\xi|)^{1-\rho}|\xi|^{\rho-1+j}\right]\\
&\lesssim t^{2\rho-1}|\xi|^{2-2j}+t|\xi|^2\\
&\lesssim t^{-1}.
\end{aligned}
\end{equation}
As treated in \eqref{e:q11}, we deduce that for $k=1, 2, 3$,
\begin{equation}\label{e:q22}
\begin{aligned}
\left\|\p_t\hat{v}\right\|_{L^2(D_3)} & \leq t^{-1}\left\|(1+t|\xi|)^{\f32}\times(1+t|\xi|)^{-\f32}(\hat{v}_{0}+\hat{v}_{1})\right\|_{L^{2}} \\
&\lesssim t^{-1}\left\|(1+t|\xi|)^{-\f32}(\hat{v}_{0}+\hat{v}_{1})\right\|_{L^{2}}\\
& \lesssim t^{\f32} \|(v_0+v_1)(ty)\|_{H^{-\f32}(\R^3)}\\
&\lesssim t^{\f12-\f{3}{1+\varepsilon_1}}\|v_0+v_1\|_{(1+\varepsilon_1, 2)}
\end{aligned}
\end{equation}
and
$$
\left\||\xi_k|\hat{v}(t, \cdot)\right\|_{L^2(D_3)}\leq t^{-1}\left\|\hat{v}(t, \cdot)\right\|_{L^2(D_3)} \lesssim t^{\f12-\f{3}{1+\varepsilon_1}}\|v_0+v_1\|_{(1+\varepsilon_1, 2)}.
$$
This, together with \eqref{e:q22}, yields
\begin{equation}\label{e:q23}
\|\widehat{\p v}\|_{L^2(D_3)}\lesssim t^{\f12-\f{3}{1+\varepsilon_1}}\|v_0+v_1\|_{(1+\varepsilon_1, 2)}.
\end{equation}
From Parts 2.1-2.3, we conclude that
\begin{equation}\label{e:q24}
\begin{aligned}
\|\p v(t, \cdot)\|_{L^2(\R^3)}&=\|\widehat{\p v}\|_{L^2(\R^3)} \leq\|\widehat{\p v}\|_{L^2(D_1)}
+\|\widehat{\p v}\|_{L^2(D_2)}+\|\widehat{\p v}\|_{L^2(D_3)}\\
& \lesssim t^{-\f{\mu}{2}}\left\|v_0\right\|_{Z, 1, 2}+t^{-\f{\mu}{2}}\left\|v_1\right\|_{Z, 1,(\f65, 2)}+t^{\f32-\f{3}{1+\varepsilon_1}}\left\|v_0+v_1\right\|_{Z, 1,(1+\varepsilon_1, 2)}.
\end{aligned}
\end{equation}

\vskip 0.2 true cm

{\bf Part 3.  The treatment  of $\|L_0v(t, \cdot)\|_{L^2(\R^3)}$ }

\vskip 0.2 true cm

We next derive the following estimate
\begin{equation}\label{e:q25}
\|L_0v(t, \cdot)\|_{L^2(\R^3)} \lesssim t^{-\f{\mu}{2}}\left\|v_0\right\|_{Z, 1, 2}+t^{-\f{\mu}{2}}\left\|v_1\right\|_{Z, 1,(\f65, 2)}+t^{\f32-\f{3}{1+\varepsilon_1}}\left\|v_0+v_1\right\|_{Z, 1,(1+\varepsilon_1, 2)}.
\end{equation}
Due to
\begin{equation}\label{e:q26}
\begin{aligned}
\|L_0v(t, \cdot)\|_{L^2(\R^3)}& =\|t\p_t\hat{v}+\widehat{x_1\p_1v}+\widehat{x_2\p_2v}+\widehat{x_3\p_3v}\|_{L^2(\R^3)}\\
& =\|t\p_t\hat{v}+\p_{\xi_1}(\xi_1\hat{v})+\p_{\xi_2}(\xi_2\hat{v})+\p_{\xi_3}(\xi_3\hat{v})\|_{L^2(\R^3)}\\
&\leq 3\|\hat{v}\|_{L^2(\R^3)}+\|t\p_t\hat{v}\|_{L^2(\R^3)}+\|\xi_1\p_{\xi_1}\hat{v}+\xi_2\p_{\xi_2}\hat{v}+\xi_3\p_{\xi_3}\hat{v}\|_{L^2(\R^3)},
\end{aligned}
\end{equation}
then in order to show \eqref{e:q25}, it suffices to estimate $\|t\p_t\hat{v}\|_{L^2(\R^3)}$ and $\|\xi_1\p_{\xi_1}\hat{v}+\xi_2\p_{\xi_2}\hat{v}+\xi_3\p_{\xi_3}\hat{v}\|_{L^2(\R^3)}$ in \eqref{e:q26} since the estimate of $\|\hat{v}\|_{L^2(\R^3)}$ has already been obtained in \eqref{equ:q18}.

For $\|t\p_t\hat{v}\|_{L^2(\R^3)}$, as in \eqref{e:q2}, combining \eqref{e:q13} and \eqref{e:q17} yields that for $\mu\geq3$,
\begin{equation}\label{e:q27}
\begin{aligned}
\left\|t\p_t\hat{v}(t, \cdot)\right\|_{L^2(D_1)}  &\lesssim\left\|(1+t|\xi|)^{-\f32}t|\xi|(\hat{v}_{0}+\hat{v}_{1})\right\|_{L^{2}}\\
& \lesssim t^{\f32-\f{3}{1+\varepsilon_1}}\sum_{j=1}^{3}\|t\p_j(v_0+v_1)\|_{(1+\varepsilon_1, 2)}\\
& = t^{\f32-\f{3}{1+\varepsilon_1}}\sum_{j=1}^{3}\|t\p_j(v_0+v_1)+x_j\p_t(v_0+v_1)\|_{(1+\varepsilon_1, 2)}\\
& = t^{\f32-\f{3}{1+\varepsilon_1}}\sum_{j=1}^{3}\|L_j(v_0+v_1)\|_{(1+\varepsilon_1, 2)}\\
& \lesssim t^{\f32-\f{3}{1+\varepsilon_1}}\|v_0+v_1\|_{Z,1, (1+\varepsilon_1, 2)}
\end{aligned}
\end{equation}
and
\begin{equation}\label{e:q28}
\begin{aligned}
\left\|t\p_t\hat{v}(t, \cdot)\right\|_{L^2(D_2)}
& \lesssim t^{-\frac{\mu}{2}}\left\|t|\xi|\hat{v}_{0}\right\|_{L^{2}}+t^{1-\f{\mu}{2}} \||\xi|^{-1}|\xi|\hat{v}_{1}\|_{L^{2}(D_2)}\\
& \lesssim t^{-\frac{\mu}{2}}\sum_{j=1}^{3}\|L_jv_0\|_{L^2}+t^{-\frac{\mu}{2}+1} \|(1+t|\xi|)^{-1}t|\xi|\hat{v}_{1}\|_{L^{2}}\\
& \lesssim t^{-\frac{\mu}{2}}\left\|v_{0}\right\|_{Z, 1, 2}+ t^{-\frac{\mu}{2}}\sum_{j=1}^{3}\|t\p_jv_1\|_{(\f65, 2)} \\
&=t^{-\frac{\mu}{2}}\left\|v_{0}\right\|_{Z, 1, 2}+ t^{-\frac{\mu}{2}}\sum_{j=1}^{3}\|L_jv_1\|_{(\f65, 2)}\\
& \lesssim  t^{-\frac{\mu}{2}}\left\|v_{0}\right\|_{Z, 1, 2}+ t^{-\frac{\mu}{2}}\left\|v_1\right\|_{Z, 1,(\f65, 2)}.
\end{aligned}
\end{equation}
By \eqref{e:q22}, we arrive at
\begin{equation}\label{e:q29}
\left\|t\p_t\hat{v}(t, \cdot)\right\|_{L^2(D_3)}=t\left\|\p_t\hat{v}(t, \cdot)\right\|_{L^2(D_3)} \lesssim t^{\f32-\f{3}{1+\varepsilon_1}}\|v_0+v_1\|_{(1+\varepsilon_1, 2)}.
\end{equation}
This, together with \eqref{e:q27} and \eqref{e:q28}, gives
\begin{equation}\label{e:q30}
\|t\p_tv(t, \cdot)\|_{L^2(\R^3)} \lesssim t^{-\f{\mu}{2}}\left\|v_0\right\|_{Z, 1, 2}+t^{-\f{\mu}{2}}\left\|v_1\right\|_{Z, 1,(\f65, 2)}+t^{\f32-\f{3}{1+\varepsilon_1}}\left\|v_0+v_1\right\|_{Z, 1,(1+\varepsilon_1, 2)}.
\end{equation}

We next estimate $\|\xi_1\p_{\xi_1}\hat{v}+\xi_2\p_{\xi_2}\hat{v}+\xi_3\p_{\xi_3}\hat{v}\|_{L^2(\R^3)}$
to show
\begin{equation}\label{e:q31}
\begin{aligned}
&\|\xi_1\p_{\xi_1}\hat{v}+\xi_2\p_{\xi_2}\hat{v}+\xi_3\p_{\xi_3}\hat{v}\|_{L^2(\R^3)} \\&\lesssim t^{-\f{\mu}{2}}\left\|v_0\right\|_{Z, 1, 2}+t^{-\f{\mu}{2}}\left\|v_1\right\|_{Z, 1,(\f65, 2)}+t^{\f32-\f{3}{1+\varepsilon_1}}\left\|v_0+v_1\right\|_{Z, 1,(1+\varepsilon_1, 2)}.
\end{aligned}
\end{equation}
To this end, we
divide the space $\R_{\xi}^3$ into three zones as in \eqref{equ:q19}.

\vskip 0.2 true cm

{\bf Part 3.1.  The analysis  in zones $D_1$ and  $D_3$}
\vskip 0.2 true cm
By \eqref{equ:q9-3} and Lemma \ref{lem1-1}, it holds   that for $k=1,2,3$,
\begin{equation}\label{e:q32}
\begin{aligned}
|\partial_{\xi_k} \Psi_0(t, 1, \xi)|
& \lesssim  \big| \partial_{\xi_k} \left( |\xi| t^\rho \left( J_{\rho-1}(|\xi|) Y_\rho(t|\xi|)
- J_\rho(t|\xi|) Y_{\rho-1}(|\xi|) \right) \right) \big| \\
& \lesssim  \frac{|\xi_k|}{|\xi|} t^\rho \left| J_{1-\rho}(|\xi|) Y_{-\rho}(t|\xi|) \right|
+ \frac{|\xi_k|}{|\xi|} t^\rho \left| J_{-\rho}(t|\xi|) Y_{1-\rho}(|\xi|) \right| \\
& \quad + t^\rho |\xi_k| \left| J_{2-\rho}(|\xi|) Y_{-\rho}(t|\xi|) \right|
+ t^\rho \frac{|\xi_k|}{|\xi|} \left| J_{1-\rho}(|\xi|) Y_{-\rho}(t|\xi|) \right| \\
& \quad + t^{\rho+1} |\xi_k| \left| Y_{1-\rho}(t|\xi|) J_{1-\rho}(|\xi|) \right|
+ t^\rho \frac{|\xi_k|}{|\xi|} \left| Y_{-\rho}(t|\xi|) J_{1-\rho}(|\xi|) \right| \\
& \quad + t^{\rho+1} |\xi_k| \left| J_{1-\rho}(t|\xi|) Y_{1-\rho}(|\xi|) \right|
+ t^\rho \frac{|\xi_k|}{|\xi|} \left| J_{-\rho}(t|\xi|) Y_{1-\rho}(|\xi|) \right| \\
& \quad + t^\rho |\xi_k| \left| Y_{2-\rho}(|\xi|) J_{-\rho}(t|\xi|) \right|
+ t^\rho \frac{|\xi_k|}{|\xi|} \left| Y_{1-\rho}(|\xi|) J_{-\rho}(t|\xi|) \right|
\end{aligned}
\end{equation}
and
\begin{equation}\label{e:q33}
\begin{aligned}
|\partial_{\xi_k} \Psi_1(t, 1, \xi)|
& \lesssim  \big| \partial_{\xi_k} \left( t^\rho \left( J_{\rho}(|\xi|) Y_\rho(t|\xi|)
- J_\rho(t|\xi|) Y_{\rho}(|\xi|) \right) \right) \big| \\
& \lesssim  \frac{|\xi_k|}{|\xi|} t^\rho \big| J_{1-\rho}(|\xi|) Y_{-\rho}(t|\xi|) \big|
+ \frac{|\xi_k|}{|\xi|^2} t^\rho \big| J_{-\rho}(|\xi|) Y_{-\rho}(t|\xi|) \big| \\
& \quad + t^{\rho+1}\f{|\xi_k|}{|\xi|} \left| Y_{1-\rho}(t|\xi|)J_{-\rho}(|\xi|) \right|
+ t^\rho \frac{|\xi_k|}{|\xi|^2} \left| Y_{-\rho}(t|\xi|)J_{-\rho}(|\xi|) \right|\\
& \quad + t^{\rho+1}\f{|\xi_k|}{|\xi|} \left| J_{1-\rho}(t|\xi|) Y_{-\rho}(|\xi|) \right|
+ t^\rho \frac{|\xi_k|}{|\xi|^2} \left| J_{-\rho}(t|\xi|) Y_{-\rho}(|\xi|) \right| \\
& \quad + t^\rho \f{|\xi_k|}{|\xi|} \left| Y_{1-\rho}(|\xi|) J_{-\rho}(t|\xi|) \right|
+ t^\rho \frac{|\xi_k|}{|\xi|^2} \left| Y_{-\rho}(|\xi|) J_{-\rho}(t|\xi|) \right|.
\end{aligned}
\end{equation}
Analogously to the treatment of \eqref{e:q2}, by \eqref{e:q32}-\eqref{e:q33}, \eqref{formula},  Lemma \ref{lem1-1} (i) and (iii),
one has that
\begin{equation}\label{e:q34}
\begin{aligned}
&\big\|\xi_1\p_{\xi_1}\hat{v}+\xi_2\p_{\xi_2}\hat{v}+\xi_3\p_{\xi_3}\hat{v}\big\|_{L^2(D_1)}\\
&=\big\|  \sum_{k=1}^{3}\left( \xi_k (\partial_{\xi_k} \Psi_0) \hat{v}_0
+ \xi_k (\partial_{\xi_k} \Psi_1) \hat{v}_1
+ \xi_k \Psi_0 \partial_{\xi_k} \hat{v}_0
+ \xi_k \Psi_1 \partial_{\xi_k} \hat{v}_1\right)\big\|_{L^2(D_1)}\\
&\lesssim \big\|(t|\xi|)^{-\frac{\mu}{2}}(\sum_{k=1}^3t|\xi_k|)(\hat{v}_{0}+\hat{v}_{1})\big\|_{L^{2}}+ \big\|(t|\xi|)^{-\f{\mu}{2}}(\sum_{k=1}^3\xi_k\p_{\xi_k})(\hat{v}_{0}+\hat{v}_{1})\big\|_{L^{2}}\\
& \lesssim \sum_{k=1}^3 \big\|(1+t|\xi|)^{-\f{3}{2}}t|\xi_k|(\hat{v}_{0}+\hat{v}_{1})\big\|_{L^{2}}+
\big\|(1+t|\xi|)^{-\f{3}{2}}\sum_{k=1}^3\widehat{(x_k\p_k)(v_0+v_1)}\big\|_{L^{2}}\\
&\quad+ \big\|(1+t|\xi|)^{-\f{3}{2}}\widehat{3(v_0+v_1)}\big\|_{L^{2}}\\
&\lesssim  t^{\f32-\f{3}{1+\varepsilon_1}}\left\|v_0+v_1\right\|_{Z, 1,(1+\varepsilon_1, 2)}
\end{aligned}
\end{equation}
and
\begin{equation}\label{e:q35}
\begin{aligned}
&\big\|\xi_1\p_{\xi_1}\hat{v}+\xi_2\p_{\xi_2}\hat{v}+\xi_3\p_{\xi_3}\hat{v}\big\|_{L^2(D_3)}\\
&\lesssim \big\|(1+t|\xi|)^{\f32}(1+t|\xi|)^{-\f32}(\hat{v}_{0}+\hat{v}_{1})\big\|_{L^{2}}+ \big\|(1+t|\xi|)^{\f32}(1+t|\xi|)^{-\f32}(\sum_{k=1}^3\xi_k\p_{\xi_k})(\hat{v}_{0}+\hat{v}_{1})\big\|_{L^{2}}\\
&\lesssim \big\|(1+t|\xi|)^{-\f32}(\hat{v}_{0}+\hat{v}_{1})\big\|_{L^{2}}+ \big\|(1+t|\xi|)^{-\f32}(\sum_{k=1}^3\xi_k\p_{\xi_k})(\hat{v}_{0}+\hat{v}_{1})\big\|_{L^{2}}\\
& \lesssim \left\|(1+t|\xi|)^{-\f{3}{2}}(\hat{v}_{0}+\hat{v}_{1})\right\|_{L^{2}}+
\big\|(1+t|\xi|)^{-\f{3}{2}}\sum_{k=1}^3\widehat{(x_k\p_k)(v_0+v_1)}\big\|_{L^{2}}\\
&\lesssim  t^{\f32-\f{3}{1+\varepsilon_1}}\left\|v_0+v_1\right\|_{Z, 1,(1+\varepsilon_1, 2)}.
\end{aligned}
\end{equation}
\vskip 0.2 true cm

{\bf Part 3.2.  The analysis  in zone $D_2$}

\vskip 0.2 true cm

It follows from \eqref{equ:q6} and Lemma \ref{lem1-1}  that for $k=1,2,3$,
\begin{equation}\label{e:q36}
\begin{aligned}
\left| \partial_{\xi_k} \Psi_0(t, 1, \xi) \right|
&\lesssim \left| \partial_{\xi_k} \left( |\xi| t^\rho \left( H_\rho^+(t|\xi|) H_{\rho-1}^-(|\xi|)
- H_{\rho-1}^+(|\xi|) H_\rho^-(t|\xi|) \right) \right) \right| \\
&\lesssim \frac{|\xi_k|}{|\xi|} t^\rho \left| H_\rho^+(t|\xi|) H_{\rho-1}^-(|\xi|) \right|
+ \frac{|\xi_k|}{|\xi|} t^\rho \left| H_{\rho-1}^+(|\xi|) H_\rho^-(t|\xi|) \right| \\
&\quad + t^{\rho+1} |\xi_k| \left| H_{\rho-1}^+(t|\xi|) H_{\rho-1}^-(|\xi|) \right|
+ t^\rho \frac{|\xi_k|}{|\xi|} \left| H_\rho^+(t|\xi|) H_{\rho-1}^-(|\xi|) \right| \\
&\quad + t^\rho |\xi_k| \left| H_{\rho-2}^-(|\xi|) H_\rho^+(t|\xi|) \right|
+ t^\rho \frac{|\xi_k|}{|\xi|} \left| H_{\rho-1}^-(|\xi|) H_\rho^+(t|\xi|) \right| \\
&\quad + t^\rho |\xi_k| \left| H_{\rho-2}^+(|\xi|) H_\rho^-(t|\xi|) \right|
+ t^\rho \frac{|\xi_k|}{|\xi|} \left| H_{\rho-1}^+(|\xi|) H_\rho^-(t|\xi|) \right| \\
&\quad + t^{\rho+1} |\xi_k| \left| H_{\rho-1}^-(t|\xi|) H_{\rho-1}^+(|\xi|) \right|
+ t^\rho \frac{|\xi_k|}{|\xi|} \left| H_\rho^-(t|\xi|) H_{\rho-1}^+(|\xi|) \right|
\end{aligned}
\end{equation}
and
\begin{equation}\label{e:q37}
\begin{aligned}
\left| \partial_{\xi_k} \Psi_1(t, 1, \xi) \right|
&\lesssim  t^{\rho+1} \f{|\xi_k|}{|\xi|} \left| H_{\rho-1}^+(t|\xi|) H_{\rho}^-(|\xi|) \right|
+ t^\rho \frac{|\xi_k|}{|\xi|^2} \left| H_\rho^+(t|\xi|) H_{\rho}^-(|\xi|) \right| \\
&\quad + t^\rho \f{|\xi_k|}{|\xi|} \left| H_{\rho-1}^-(|\xi|) H_\rho^+(t|\xi|) \right|
+ t^\rho \frac{|\xi_k|}{|\xi|^2} \left| H_{\rho}^-(|\xi|) H_\rho^+(t|\xi|) \right| \\
&\quad + t^\rho \f{|\xi_k|}{|\xi|} \left| H_{\rho-1}^+(|\xi|) H_\rho^-(t|\xi|) \right|
+ t^\rho \frac{|\xi_k|}{|\xi|^2} \left| H_{\rho}^+(|\xi|) H_\rho^-(t|\xi|) \right| \\
&\quad + t^{\rho+1} \f{|\xi_k|}{|\xi|} \left| H_{\rho-1}^-(t|\xi|) H_{\rho}^+(|\xi|) \right|
+ t^\rho \frac{|\xi_k|}{|\xi|^2} \left| H_\rho^-(t|\xi|) H_{\rho}^+(|\xi|) \right|.
\end{aligned}
\end{equation}
As in \eqref{e:q7}, using \eqref{e:q36}-\eqref{e:q37} and  Lemma \ref{lem1-1}(ii), we can obtain that for $\mu\geq3$,
\begin{equation}\label{e:q38}
\begin{aligned}
&\big\|\xi_1\p_{\xi_1}\hat{v}+\xi_2\p_{\xi_2}\hat{v}+\xi_3\p_{\xi_3}\hat{v}\big\|_{L^2(D_2)}
\\
&\lesssim \big\|t^{-\frac{\mu}{2}}|\xi|\hat{v}_{0}\big\|_{L^{2}}+ \big\|t^{-\frac{\mu}{2}}(1+|\xi|^2)^{-\f12}|\xi|\hat{v}_{1}\big\|_{L^{2}}+ \big\|t^{-\f{\mu}{2}}(1+|\xi|^2)^{-\f12}(\sum_{k=1}^3\xi_k\p_{\xi_k})\hat{v}_{1}\big\|_{L^{2}}\\
&\quad +\big\|t^{-\f{\mu}{2}}(\sum_{k=1}^3\xi_k\p_{\xi_k})\hat{v}_{0}\big\|_{L^{2}}\\
& \lesssim t^{-\frac{\mu}{2}}\big\|\hat{v}_{0}\big\|_{Z, 1, 2}+ t^{-\frac{\mu}{2}}\big\|\hat{v}_{1}\big\|_{Z, 1, (\f{6}{5}, 2)}+ t^{-\f{\mu}{2}}
\big\|(1+|\xi|^2)^{-\f{1}{2}}\sum_{k=1}^3\widehat{(x_k\p_k)v_1}\big\|_{L^{2}}\\
&\quad+ t^{-\f{\mu}{2}} \big\|(1+|\xi|^2)^{-\f{1}{2}}3\widehat{v_1}\big\|_{L^{2}}+t^{-\f{\mu}{2}} \big\|\sum_{k=1}^3\widehat{(x_k\p_k)v_0}\big\|_{L^{2}}+ t^{-\f{\mu}{2}} \left\|3\widehat{v_0}\right\|_{L^{2}}\\
&\lesssim  t^{-\f{\mu}{2}}\left\|v_0\right\|_{Z, 1, 2}+t^{-\f{\mu}{2}}\left\|v_1\right\|_{Z, 1,(\f65, 2)}.
\end{aligned}
\end{equation}

Therefore, collecting \eqref{e:q34}, \eqref{e:q35} and \eqref{e:q38} yields \eqref{e:q31}. Consequently, \eqref{e:q25} holds.

\vskip 0.2 true cm

{\bf Part 4.  The treatment  of $\|L_jv(t, \cdot)\|_{L^2(\R^3)}(j=1, 2, 3)$ }

\vskip 0.2 true cm

We now assert
\begin{equation}\label{e:q39}
\|L_jv(t, \cdot)\|_{L^2(\R^3)} \lesssim t^{-\f{\mu}{2}}\left\|v_0\right\|_{Z, 1, 2}+t^{-\f{\mu}{2}}\left\|v_1\right\|_{Z, 1,(\f65, 2)}+t^{\f32-\f{3}{1+\varepsilon_1}}\left\|v_0+v_1\right\|_{Z, 1,(1+\varepsilon_1, 2)}.
\end{equation}

To prove \eqref{e:q39}, it  suffices only to estimate $\|L_3v(t, \cdot)\|_{L^2(\R^3)}$ since the terms $\|L_jv(t, \cdot)\|_{L^2(\R^3)}$ for $(j=1,2)$ can be treated in the same way. The support condition of the initial data, together with $t\geq1$, implies that $|x|\lesssim t$
for $(t, x)\in \operatorname{supp}v$. Thus,
\begin{equation}\label{e:q40}
\begin{aligned}
\|L_3v(t, \cdot)\|_{L^2(\R^3)}\leq \|t\p_3v\|_{L^2(\R^3)}+\|x_3\p_tv\|_{L^2(\R^3)}
\lesssim \|t|\xi_3|\hat{v}\|_{L^2(\R^3)}+\|t\p_t\hat{v}\|_{L^2(\R^3)}.
\end{aligned}
\end{equation}
Note that $\|t\p_t\hat{v}\|_{L^2(\R^3)}$ can be treated as in \eqref{e:q30},
it remains to estimate $\|t|\xi_3|\hat{v}\|_{L^2(\R^3)}$.

It follows from  \eqref{e:q4}, \eqref{e:q1}, \eqref{e:q5} and  \eqref{e:q10} that for $\mu\geq3$,
\begin{equation}\label{e:q41}
\begin{aligned}
\left\|t|\xi_3|\hat{v}(t, \cdot)\right\|_{L^2(D_1)} & \lesssim\left\|(1+t|\xi|)^{-\f32}t|\xi|(\hat{v}_{0}+\hat{v}_{1})\right\|_{L^{2}(D_1)} \\
& \lesssim t^{\f32-\f{3}{1+\varepsilon_1}}\left\|v_0+v_1\right\|_{Z, 1,(1+\varepsilon_1, 2)},
\end{aligned}
\end{equation}
\begin{equation}\label{e:q42}
\begin{aligned}
\left\|t|\xi_3|\hat{v}(t, \cdot)\right\|_{L^2(D_2)}
& \lesssim t^{-\frac{\mu}{2}}\left\|t|\xi|v_{0}\right\|_{L^{2}}+t^{-\f{\mu}{2}} \|(1+|\xi|^2)^{-\f{1}{2}}t|\xi|\hat{v}_{1}\|_{L^{2}}\\
&\lesssim t^{-\f{\mu}{2}}\left\|v_0\right\|_{Z, 1, 2}+t^{-\f{\mu}{2}}\left\|v_1\right\|_{Z, 1,(\f65, 2)}
\end{aligned}
\end{equation}
and
\begin{equation}\label{e:q43}
\begin{aligned}
\left\|t|\xi_3|\hat{v}(t, \cdot)\right\|_{L^2(D_3)} & \leq\big\|(1+t|\xi|)^{\f32}\times(1+t|\xi|)^{-\f32}t|\xi|(\hat{v}_{0}+\hat{v}_{1})\big\|_{L^{2}(D_3)} \\
&\lesssim\big\|(1+t|\xi|)^{-\f32}t|\xi|(\hat{v}_{0}+\hat{v}_{1})\big\|_{L^{2}}\\
&\lesssim t^{\f32-\f{3}{1+\varepsilon_1}}\|v_0+v_1\|_{Z, 1, (1+\varepsilon_1, 2)}.
\end{aligned}
\end{equation}
Thus, combining \eqref{e:q41}, \eqref{e:q42} and \eqref{e:q43} yields
\begin{equation}\label{e:q44}
\|t|\xi_3|\hat{v}(t, \cdot)\|_{L^2(\R^3)} \lesssim t^{-\f{\mu}{2}}\left\|v_0\right\|_{Z, 1, 2}
+t^{-\f{\mu}{2}}\left\|v_1\right\|_{Z, 1,(\f65, 2)}+t^{\f32-\f{3}{1+\varepsilon_1}}\left\|v_0+v_1\right\|_{Z, 1,(1+\varepsilon_1, 2)}.
\end{equation}
Then \eqref{e:q39} follows immediately from \eqref{e:q30} and \eqref{e:q44}.
\vskip 0.2 true cm

{\bf Part 5.  The treatment of $\|\Omega_{kj}v(t, \cdot)\|_{L^2(\R^3)}$ $(1\le k<j\le3)$ }

\vskip 0.2 true cm
We now prove
\begin{equation}\label{e:q45}
\|\Omega_{kj}v(t, \cdot)\|_{L^2(\R^3)} \lesssim t^{-\f{\mu}{2}}\left\|v_0\right\|_{Z, 1, 2}+t^{-\f{\mu}{2}}\left\|v_1\right\|_{Z, 1,(\f65, 2)}+t^{\f32-\f{3}{1+\varepsilon_1}}\left\|v_0+v_1\right\|_{Z, 1,(1+\varepsilon_1, 2)}.
\end{equation}

Following the same argument as in \eqref{e:q40}, one can arrive at
\begin{equation}\label{e:q46''}
\begin{aligned}
\|\Omega_{kj}v(t, \cdot)\|_{L^2(\R^3)}&\leq \|\f{x_k}{t}t\p_jv\|_{L^2(\R^3)}+\|\f{x_j}{t}t\p_kv\|_{L^2(\R^2)}\\
& \lesssim \|t|\xi_j|\hat{v}\|_{L^2(\R^3)}+\|t|\xi_k|\hat{v}\|_{L^2(\R^3)}.\\
\end{aligned}
\end{equation}
For each $j=1,2,3$, \eqref{e:q44} implies
$$
\|t|\xi_j|\hat{v}(t, \cdot)\|_{L^2(\R^3)} \lesssim t^{-\f{\mu}{2}}\left\|v_0\right\|_{Z, 1, 2}+t^{-\f{\mu}{2}}\left\|v_1\right\|_{Z, 1,(\f65, 2)}+t^{\f32-\f{3}{1+\varepsilon_1}}\left\|v_0+v_1\right\|_{Z, 1,(1+\varepsilon_1, 2)}.
$$
This proves \eqref{e:q45}.

Therefore, the proof of Lemma \ref{lem2} is completed by collecting the above results in Parts 1-5.
\end{proof}

It remains to consider the case where $\mu$ is not an odd integer. Next, we  establish the following lemma.

\begin{lemma}\label{lem2-1}
Let $v$ be the solution to \eqref{equ:q16} and $\varepsilon_1\in(0, 1)$. Then
for $\f{14}{5}\leq\mu<3$,
\begin{equation}\label{e:q46'}
\|v(t, \cdot)\|_{Z, 1, 2} \lesssim t^{-\f{\mu}{2}}\left\|v_0\right\|_{Z, 1, 2}
+t^{-\f{\mu}{2}}\left\|v_1\right\|_{Z, 1,(\f{6}{5}, 2)}
+t^{-\f{\mu}{2}}\left\|v_0+v_1\right\|_{Z, 1,(\f{6}{3+\mu}, 2)},
\end{equation}
and for $\mu\geq3$,
\begin{equation}\label{e:q46}
\|v(t, \cdot)\|_{Z, 1, 2} \lesssim t^{-\f{\mu}{2}}\left\|v_0\right\|_{Z, 1, 2}
+t^{-\f{\mu}{2}}\left\|v_1\right\|_{Z, 1,(\f{6}{5}, 2)}
+t^{\f{3}{2}-\f{3}{1+\varepsilon_1}}\left\|v_0+v_1\right\|_{Z, 1,(1+\varepsilon_1, 2)}.
\end{equation}
\end{lemma}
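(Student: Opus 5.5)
We follow the five-part scheme of Lemma \ref{lem2}. We start from the representation \eqref{formula}, with $\Psi_j$ and $\p_t\Psi_j$ now written through \eqref{equ:q9-5'} in terms of $J_{\pm(\rho-1+j)}(|\xi|)$ and $J_{\pm(\rho-k)}(t|\xi|)$, since $\rho=-\f{\mu-1}{2}$ is no longer an integer. In zone $D_2$ ($|\xi|\ge1$) nothing changes: the Hankel form \eqref{equ:q6} together with \eqref{equ:q12} gives $|\Psi_0|\lesssim t^{-\f{\mu}{2}}$, $|\Psi_1|\lesssim t^{-\f\mu2}|\xi|^{-1}$ and $|\p_t\Psi_j|\lesssim t^{-\f\mu2}|\xi|^{1-j}$. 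The estimates \eqref{e:q6}--\eqref{e:q9}, \eqref{e:q18}--\eqref{e:q20}, \eqref{e:q28}, \eqref{e:q38} and \eqref{e:q42} therefore carry over verbatim and produce the terms $t^{-\f\mu2}\|v_0\|_{Z,1,2}+t^{-\f\mu2}\|v_1\|_{Z,1,(\f65,2)}$. The real work is in the low-frequency zones $D_1$ and $D_3$.

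In $D_1$ ($|\xi|\le 1\le t|\xi|$) we bound the determinant in \eqref{equ:q9-5'} using \eqref{equ:q14}, \eqref{equ:q15}. The dominant entry is $J_{-(\rho-1+j)}(|\xi|)=J_{\f{\mu+1}{2}-j}(|\xi|)$, which is small, paired with $J_{\rho}(t|\xi|)=O((t|\xi|)^{-1/2})$. The other entry $J_{\rho-1+j}(|\xi|)\sim|\xi|^{\rho-1+j}$ is large, paired with $J_{-\rho}(t|\xi|)=O((t|\xi|)^{-1/2})$. This gives $|\Psi_j|\lesssim t^{-\f\mu2}|\xi|^{-\f\mu2}$ exactly as in \eqref{e:q1}, and likewise $|\p_t\Psi_j|\lesssim t^{-\f\mu2}|\xi|^{1-\f\mu2}$. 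In $D_3$ ($t|\xi|\le1$) the small-argument bounds give $|\Psi_j|+t|\p_t\Psi_j|\lesssim 1$, as in \eqref{e:q10}, \eqref{e:q21}. The $\xi$-derivatives needed for $L_0$ are handled via Lemma \ref{lem1-1} as in \eqref{e:q32}--\eqref{e:q33} and give the same bounds multiplied by $t|\xi|$.

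For $\mu\ge3$ (non-odd), these bounds let us dominate the symbols on $D_1\cup D_3$ by $(1+t|\xi|)^{-\f32}$. The argument \eqref{e:q2}--\eqref{e:q4} (rescaling $x=ty$, duality with $H^{\f32}$, and H\"older with $q=1+\varepsilon_1$) then yields \eqref{e:q46} exactly as in Lemma \ref{lem2}.

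For $\f{14}5\le\mu<3$ the symbol bound is $t^{-\f\mu2}|\xi|^{-\f\mu2}$, and $|\xi|^{-\f\mu2}$ is no longer dominated by $(1+t|\xi|)^{-3/2}$ times $t^{3/2}$ losses. Instead we keep the factor $t^{-\f\mu2}$. We estimate $\||\xi|^{-\f\mu2}\hat f\|_{L^2}=\|f\|_{\dot H^{-\f\mu2}}$ for $f=v_0+v_1$ (and its $Z$-derivatives) by the Hardy--Littlewood--Sobolev inequality. Since $\f\mu2<\f32$, we have $\dot H^{-\f\mu2}\supset L^{q}$ with $\f1q=\f12+\f{\mu}{6}$, i.e.\ $q=\f6{3+\mu}$. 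The radial--angular version is $\|f\|_{(q,2)}$, because $L^q_rL^2_\omega$ dominates via duality $\dot H^{\f\mu2}\subset L^{q'}\subset L^{q'}_rL^2_\omega$. This gives $t^{-\f\mu2}\|v_0+v_1\|_{(\f6{3+\mu},2)}$ on $D_1$.

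On $D_3$ we use $|\Psi_j|\lesssim1\le (t|\xi|)^{-\f\mu2}$, so the same bound holds. For the vector fields, factors $t|\xi_k|$ are converted into $L_k(v_0+v_1)$ at $t=1$ as in \eqref{e:q27}, and $\xi\cdot\nabla_\xi$ into $x\cdot\nabla$ plus zeroth-order terms as in \eqref{e:q34}. The $\Omega_{kj}$ and $L_j$ parts reduce to $t|\xi|\hat v$ and $t\p_t\hat v$ via the support property $|x|\lesssim t$, as in \eqref{e:q40}, \eqref{e:q46''}.

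\textbf{Main obstacle.} We expect the main obstacle to be the precise bookkeeping of the Bessel determinant \eqref{equ:q9-5'} and its $\xi$-derivatives in $D_1$ and $D_3$ for non-integer $\rho$. In particular one must verify that the $\csc(\rho\pi)$ cancellation structure does not lose powers, especially as $\rho\to-1$ when $\mu\to3^-$, where the constant degenerates. We would handle this by treating each fixed $\mu$ separately, so that the constants are allowed to depend on $\mu$.
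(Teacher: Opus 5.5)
Your Parts 1--2 (the $L^2$ and $\p v$ components) are correct and follow the paper. Your HLS step $\||\xi|^{-\f\mu2}\hat f\|_{L^2}\lesssim\|f\|_{L^{\f{6}{3+\mu}}}\lesssim\|f\|_{(\f{6}{3+\mu},2)}$ is just the unrescaled form of \eqref{e:q2'}--\eqref{e:q4'}. One minor point: the symbol bounds control $|\Psi_0\hat v_0+\Psi_1\hat v_1|$ by $|\hat v_0|+|\hat v_1|$, not by $|\hat v_0+\hat v_1|$, so ``$f=v_0+v_1$'' has to mean $\|v_0\|+\|v_1\|$. Read literally, \eqref{e:q46} fails for $\mu>3$ with $v_1=-v_0$ and $\int v_0\neq0$: then $\hat v(t,0)\to\f{\mu-2}{\mu-1}\hat v_0(0)$, and the zone $t|\xi|\ll1$ alone gives $\|v(t)\|_{L^2}\gtrsim t^{-\f32}\gg t^{-\f\mu2}$.

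The genuine gap is in the $L_0$, $L_j$, $\Omega_{kj}$ components. You correctly find that $t\p_t\Psi_j$, $t\xi_k\Psi_j$ and $\xi\cdot\nabla_\xi\Psi_j$ obey the symbol bounds times $t|\xi|$. But you then ``convert $t|\xi_k|$ into $L_k(v_0+v_1)$ at $t=1$''. The $t$ there is the current time, so $t|\xi_k||\hat f|=t|\widehat{\p_k f}|$, and what you actually obtain is $t\|\p_kf\|$, not a time-independent data norm. ($L_k$ at $t=1$ is $\p_k+x_k\p_t$ and cannot absorb the factor $t$.) This is harmless only on $D_3$, where $t|\xi|\le1$. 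On $D_2$ it turns \eqref{e:q28}, \eqref{e:q38} and \eqref{e:q42} into $t^{1-\f\mu2}\|\nabla v_0\|_{L^2}+\cdots$. On $D_1$ the weight $(t|\xi|)^{1-\f\mu2}$ has $L^2(D_1)$-size $t^{1-\f\mu2}$ for $\mu<5$.

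This loss is real, not an artifact of crude bounds. The function $w=t^{\f\mu2}\hat v$ solves $w''+(|\xi|^2-\f{\mu(\mu-2)}{4t^2})w=0$, whose energy $|w'|^2+(|\xi|^2-\f{\mu(\mu-2)}{4t^2})|w|^2$ is nondecreasing in $t$. Using this on $|\xi|\ge\mu$ shows that for any nonzero data $\|t\nabla v(t)\|_{L^2}+\|t\p_tv(t)\|_{L^2}\gtrsim t^{1-\f\mu2}$. Hence three reductions cannot yield $t^{-\f\mu2}$ for $\f{14}5\le\mu<3$, nor $t^{\f32-\f3{1+\varepsilon_1}}$ for $3<\mu<\f{6}{1+\varepsilon_1}-1$:

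(i) $\|L_jv\|\le\|t\p_jv\|+\|x_jv_t\|$;

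(ii) $\|\Omega_{kj}v\|\lesssim\|t\nabla v\|$ via $|x|\lesssim t$;

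(iii) splitting $\widehat{L_0v}$ into $t\p_t\hat v$ and $\xi\cdot\nabla_\xi\hat v$.

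All three discard the cancellation inside $t\p_j+x_j\p_t$ and $t\p_t+x\cdot\nabla$. The paper's Parts 3--5 (\eqref{e:q27}, \eqref{e:q28}, \eqref{e:q38}, \eqref{e:q40}--\eqref{e:q42}, \eqref{e:q46''}) make exactly this reduction, so appealing to them does not close the gap.

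What does work is the commutator structure of $P=\square+\f\mu t\p_t$: $[P,\Omega_{kj}]=0$, $[P,L_0]=2P$ and $[P,L_j]=\f{\mu}{t^2}L_j$.

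$\Omega_{kj}v$ and $L_0v$ are therefore again solutions of \eqref{equ:q16}. Their data are $(\Omega_{kj}v_0,\Omega_{kj}v_1)$ and $\bigl(v_1+x\cdot\nabla v_0,\ (1-\mu)v_1+\Delta v_0+x\cdot\nabla v_1\bigr)$ respectively. Your Part 1 estimate applies to them directly, with correspondingly adjusted data norms.

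$L_jv$ solves $PW=\f{\mu}{t^2}W$, the same equation satisfied by $\p_tv$. Its multipliers are the order-$(\rho-1)$ Bessel combinations already appearing in \eqref{equ:q9-4} and \eqref{equ:q9-5'}.
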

\begin{proof}
Note that for $|\xi|\geq 1$, the related proof for \eqref{e:q46'}-\eqref{e:q46} is identical to that in Lemma \ref{lem2}
where $\mu\ge3$ is an odd integer. Then we omit the details here. Hence, it suffices to consider the remaining frequency zones
\begin{equation}\label{e:q47}
A_1=\{\xi: |\xi| \leq 1 \leq t|\xi|\}, \quad A_2=\{\xi: |\xi|\leq t|\xi| \leq 1\}.
\end{equation}
By the definition of $\|v\|_{Z, 1, 2}$,
we divide the proof into the following five parts.
\vskip 0.2 true cm

{\bf Part 1.  The treatment  of $\|v(t, \cdot)\|_{L^2(\R^3)}$ }

\vskip 0.2 true cm
We shall establish the following estimates
\begin{equation}\label{e:q48'}
\|v(t, \cdot)\|_{L^2(\R^3)} \lesssim t^{-\f{\mu}{2}}\left\|v_0\right\|_{L^2(\R^3)}
+t^{-\f{\mu}{2}}\left\|v_1\right\|_{(\f{6}{5}, 2)}
+t^{-\f{\mu}{2}}\left\|v_0+v_1\right\|_{(\f{6}{3+\mu}, 2)}\quad \text{for}\, \f{14}{5}\leq\mu<3
\end{equation}
and
\begin{equation}\label{e:q48}
\|v(t, \cdot)\|_{L^2(\R^3)} \lesssim t^{-\f{\mu}{2}}\left\|v_0\right\|_{L^2(\R^3)}
+t^{-\f{\mu}{2}}\left\|v_1\right\|_{(\f{6}{5}, 2)}
+t^{\f{3}{2}-\f{3}{1+\varepsilon_1}}\left\|v_0+v_1\right\|_{(1+\varepsilon_1, 2)} \quad \text{for}\, \mu\geq3.
\end{equation}

When $\xi\in A_1$ and $\rho=-\f{\mu-1}{2}\in(-\infty,-\f{9}{10}]$ is not an integer, it follows from \eqref{equ:q9-5'}
and Lemma \ref{lem1} that for $j=0,1$,
\begin{equation}\label{e:q49}
\begin{aligned}
|\Psi_j(t,\tau,\xi)|
&\lesssim\left| |\xi|^{1-j} t^{\rho}
\begin{vmatrix}
J_{1-\rho-j}(|\xi|) & J_{-\rho}(t|\xi|) \\
(-1)^{1-j}J_{\rho-1+j}(|\xi|) & J_{\rho}(t|\xi|)
\end{vmatrix} \right|\\
&\lesssim |\xi|^{1-j} t^{\rho}|\xi|^{\rho-1+j}(t|\xi|)^{-\f12}\\
&= t^{-\f{\mu}{2}}|\xi|^{-\f{\mu}{2}}.
\end{aligned}
\end{equation}
For $\f{14}{5}\leq\mu<3$, it follows from
$x=ty$ and $\eta=t\xi$ that
\begin{equation}\label{e:q2'}
\begin{aligned}
\left\|\hat{v}(t, \cdot)\right\|_{L^2(A_1)} & \lesssim\left\|(1+t|\xi|)^{-\f{\mu}{2}}(\hat{v}_{0}+\hat{v}_{1})\right\|_{L^{2}(A_1)} \\
& \lesssim t^{\f32}\left(\int_{R^3} (\frac{1}{1+|\eta|})^\mu\big(\int_{R^3} e^{-i y \cdot \eta} (v_0+v_1)(ty) \mathrm{d} y\big)^2 \mathrm{d} \eta\right)^{\frac{1}{2}}\\
& \lesssim t^{\f32}\|(v_0+v_1)(ty)\|_{H^{-\f{\mu}{2}}(\R^3)}.
\end{aligned}
\end{equation}
Due to $\|V_1\|_{H^{-\f{\mu}{2}}}=\sup _{\substack{v \in H^{\f{\mu}{2}} \\ v \neq 0}} \frac{\left|\int_{\mathbb{R}^3} V_1(y) v(y) \mathrm{d} y\right|}{\|v\|_{H^{\f{\mu}{2}}}}$ with $V_1(y)=(v_0+v_1)(ty)$, by
 H\"{o}lder's inequality and Sobolev imbedding theorem, one then obtain that for $\f{14}{5}\leq\mu<3$,
\begin{equation}\label{e:q3'}
\|V_1 v\|_{L^1(\R^3)} \lesssim\|V_1\|_{(\f{6}{3+\mu}, 2)}\|v\|_{\f{6}{3-\mu}} \lesssim\|V_1\|_{(\f{6}{3+\mu}, 2)}\|v\|_{H^{\f{\mu}{2}}}\lesssim t^{-\f{3+\mu}{2}}\|v_0+v_1\|_{(\f{6}{3+\mu}, 2)}\|v\|_{H^{\f{\mu}{2}}}.
\end{equation}
This, together with  \eqref{e:q2'}, implies that for $\f{14}{5}\leq\mu<3$,
\begin{equation}\label{e:q4'}
\left\|\hat{v}(t, \cdot)\right\|_{L^2(A_1)}\lesssim t^{-\f{\mu}{2}}\|v_0+v_1\|_{(\f{6}{3+\mu}, 2)}.
\end{equation}
Note that for $\mu\geq 3$, the Sobolev embedding theorem in \eqref{e:q3'} is not applicable, but this difficulty can be overcome by the estimate $(1+t|\xi|)^{-\frac{\mu}{2}} \le (1+t|\xi|)^{-\frac{3}{2}}$. Then it follows from \eqref{e:q49} and \eqref{e:q2} that for $\mu\geq 3$,
\begin{equation}\label{e:q50}
\begin{aligned}
\left\|\hat{v}(t, \cdot)\right\|_{L^2(A_1)} \lesssim\big\|(1+t|\xi|)^{-\f32}(\hat{v}_{0}+\hat{v}_{1})\big\|_{L^{2}} \lesssim t^{\f32-\f{3}{1+\varepsilon_1}}\|v_0+v_1\|_{(1+\varepsilon_1, 2)}.
\end{aligned}
\end{equation}
When $\xi\in A_2$ and $\rho=-\f{\mu-1}{2}$ is not an integer, it follows from \eqref{equ:q9-5'} and \eqref{equ:q15} that for $j=0,1$,
\begin{equation}\label{e:q51}
\begin{aligned}
|\Psi_j(t,\tau,\xi)|
\lesssim |\xi|^{1-j} t^{\rho}\left[|\xi|^{1-\rho-j}(t|\xi|)^{\rho}+|\xi|^{\rho-1+j}(t|\xi|)^{-\rho}\right]
\lesssim 1.
\end{aligned}
\end{equation}
Then for $|\xi|\leq t|\xi| \leq 1$ and $\f{14}{5}\leq\mu<3$, as in \eqref{e:q2'}, one has
\begin{equation}\label{e:q52'}
\begin{aligned}
\left\|\hat{v}(t, \cdot)\right\|_{L^2(A_2)} & \leq\big\|(1+t|\xi|)^{\f{\mu}{2}}\times(1+t|\xi|)^{-\f{\mu}{2}}(\hat{v}_{0}+\hat{v}_{1})\big\|_{L^{2}(A_2)} \\
&\lesssim\big\|(1+t|\xi|)^{-\f{\mu}{2}}(\hat{v}_{0}+\hat{v}_{1})\big\|_{L^{2}}\\
&\lesssim t^{-\f{\mu}{2}}\|v_0+v_1\|_{(\f{6}{3+\mu}, 2)}.
\end{aligned}
\end{equation}
While for $|\xi|\leq t|\xi| \leq 1$ and $\mu\geq3$, as in \eqref{e:q2}, we arrive at
\begin{equation}\label{e:q52}
\begin{aligned}
\big\|\hat{v}(t, \cdot)\big\|_{L^2(A_2)} & \leq\big\|(1+t|\xi|)^{\f32}\times(1+t|\xi|)^{-\f32}(\hat{v}_{0}+\hat{v}_{1})\big\|_{L^{2}(A_2)} \\
&\lesssim\big\|(1+t|\xi|)^{-\f32}(\hat{v}_{0}+\hat{v}_{1})\big\|_{L^{2}}\\
&\lesssim t^{\f32-\f{3}{1+\varepsilon_1}}\|v_0+v_1\|_{(1+\varepsilon_1, 2)}.
\end{aligned}
\end{equation}

Collecting \eqref{e:q4'}-\eqref{e:q50}, \eqref{e:q52}-\eqref{e:q52'} and \eqref{e:q9}  gives \eqref{e:q48'}-\eqref{e:q48}.

\vskip 0.2 true cm

{\bf Part 2.  The treatment  of $\|\p v(t, \cdot)\|_{L^2(\R^3)}$ }

\vskip 0.2 true cm

We assert
\begin{equation}\label{e:q53'}
\|\p v(t, \cdot)\|_{L^2(\R^3)} \lesssim t^{-\f{\mu}{2}}\left\|v_0\right\|_{Z, 1, 2}+t^{-\f{\mu}{2}}\left\|v_1\right\|_{Z, 1,(\f65, 2)}+t^{-\f{\mu}{2}}\left\|v_0+v_1\right\|_{Z, 1,(\f{6}{3+\mu}, 2)}\quad \text{for}\, \f{14}{5}\leq\mu<3
\end{equation}
and
\begin{equation}\label{e:q53}
\|\p v(t, \cdot)\|_{L^2(\R^3)} \lesssim t^{-\f{\mu}{2}}\left\|v_0\right\|_{Z, 1, 2}+t^{-\f{\mu}{2}}\left\|v_1\right\|_{Z, 1,(\f65, 2)}+t^{\f32-\f{3}{1+\varepsilon_1}}\left\|v_0+v_1\right\|_{Z, 1,(1+\varepsilon_1, 2)}\quad \text{for}\, \mu\geq3.
\end{equation}
To prove \eqref{e:q53'}-\eqref{e:q53}, we only need to consider  the two zones as in \eqref{e:q47}.

For the zone $A_1$, it follows from \eqref{equ:q9-5'} and  Lemma \ref{lem1} (i) that for $j=0,1$,
\begin{equation}\label{e:q54}
\begin{aligned}
|\p_t\Psi_j(t,\tau,\xi)|
&\lesssim\left| |\xi|^{2-j} t^{\rho}
\begin{vmatrix}
J_{1-\rho-j}(|\xi|) & J_{1-\rho}(t|\xi|) \\
(-1)^{2-j}J_{\rho-1+j}(|\xi|) & J_{\rho-1}(t|\xi|)
\end{vmatrix} \right|\\
&\lesssim |\xi|^{2-j} t^{\rho}\left[|\xi|^{1-\rho-j}(t|\xi|)^{-\f12}+(t|\xi|)^{-\f12}|\xi|^{\rho-1+j}\right]\\
&\lesssim t^{-\f{\mu}{2}}|\xi|^{-\f{\mu}{2}}\cdot|\xi|\\
&\leq t^{-\f{\mu}{2}}|\xi|^{-\f{\mu}{2}}.
\end{aligned}
\end{equation}
By  $\f{14}{5}\leq\mu<3$, as in \eqref{e:q2'} and \eqref{e:q4'}, one can obtain that for $k=1, 2, 3$,
\begin{equation}\label{e:q55}
\left\||\xi_k|\hat{v}(t, \cdot)\right\|_{L^2(A_1)}\leq\left\|\hat{v}(t, \cdot)\right\|_{L^2(A_1)}
\lesssim  t^{-\f{\mu}{2}}\left\|v_0+v_1\right\|_{(\f{6}{3+\mu}, 2)}
\end{equation}
and
\begin{equation}\label{e:q56}
\begin{aligned}
\left\|\p_t\hat{v}(t, \cdot)\right\|_{L^2(A_1)} & \lesssim \big\|(1+t|\xi|)^{-\f{\mu}{2}}(\hat{v}_{0}+\hat{v}_{1})\big\|_{L^{2}} \lesssim t^{-\f{\mu}{2}}\left\|v_0+v_1\right\|_{(\f{6}{3+\mu}, 2)}.
\end{aligned}
\end{equation}
By  $\mu\geq3$, as in \eqref{e:q50} and \eqref{e:q2}, we have that for $k=1, 2, 3$,
\begin{equation}\label{e:q55}
\left\||\xi_k|\hat{v}(t, \cdot)\right\|_{L^2(A_1)}\leq\left\|\hat{v}(t, \cdot)\right\|_{L^2(A_1)}
\lesssim  t^{\f32-\f{3}{1+\varepsilon_1}}\|v_0+v_1\|_{(1+\varepsilon_1, 2)}
\end{equation}
and
\begin{equation}\label{e:q56}
\begin{aligned}
\left\|\p_t\hat{v}(t, \cdot)\right\|_{L^2(A_1)} & \lesssim \big\|(1+t|\xi|)^{-\f32}(\hat{v}_{0}+\hat{v}_{1})\big\|_{L^{2}} \lesssim t^{\f32-\f{3}{1+\varepsilon_1}}\|v_0+v_1\|_{(1+\varepsilon_1, 2)}.
\end{aligned}
\end{equation}

For the zone $A_2$,  one has from \eqref{equ:q9-4} and Lemma \ref{lem1} (i) that for $j=0, 1$,
\begin{equation}\label{e:q58'}
\begin{aligned}
|\p_t\Psi_j(t,\tau,\xi)|
\lesssim |\xi|^{2-j} t^{\rho}\left[|\xi|^{1-\rho-j}(t|\xi|)^{\rho-1}+(t|\xi|)^{1-\rho}|\xi|^{\rho-1+j}\right]
\leq t^{-1}.
\end{aligned}
\end{equation}
As in \eqref{e:q11} and \eqref{e:q52}, we obtain that for $\mu\geq3$ and $k=1, 2, 3$,
\begin{equation}\label{e:q58}
\begin{aligned}
\left\|\p_t\hat{v}\right\|_{L^2(A_2)} \lesssim t^{-1}\left\|(1+t|\xi|)^{-\f32}(\hat{v}_{0}+\hat{v}_{1})\right\|_{L^{2}} \lesssim t^{\f12-\f{3}{1+\varepsilon_1}}\|v_0+v_1\|_{(1+\varepsilon_1, 2)}
\end{aligned}
\end{equation}
and
\begin{equation}\label{e:q57}
\left\||\xi_k|\hat{v}(t, \cdot)\right\|_{L^2(A_2)}\leq t^{-1}\left\|\hat{v}(t, \cdot)\right\|_{L^2(A_2)} \lesssim t^{\f12-\f{3}{1+\varepsilon_1}}\|v_0+v_1\|_{(1+\varepsilon_1, 2)}.
\end{equation}

For $\f{14}{5}\leq\mu<3$ and $k=1, 2, 3$, as in \eqref{e:q2'}, one arrives at
\begin{equation}\label{e:q58'}
\begin{aligned}
\left\|\p_t\hat{v}\right\|_{L^2(A_2)} \lesssim t^{-1}\left\|(1+t|\xi|)^{-\f{\mu}{2}}(\hat{v}_{0}+\hat{v}_{1})\right\|_{L^{2}} \lesssim t^{-\f{\mu}{2}-1}\|v_0+v_1\|_{(\f{6}{3+\mu}, 2)}
\end{aligned}
\end{equation}
and
\begin{equation}\label{e:q57'}
\left\||\xi_k|\hat{v}(t, \cdot)\right\|_{L^2(A_2)}\leq t^{-1}\left\|\hat{v}(t, \cdot)\right\|_{L^2(A_2)} \lesssim t^{-\f{\mu}{2}-1}\|v_0+v_1\|_{(\f{6}{3+\mu}, 2)}.
\end{equation}
Therefore, combining  \eqref{e:q55}-\eqref{e:q57'} with \eqref{e:q20} yields \eqref{e:q53'}-\eqref{e:q53}.

\vskip 0.2 true cm

{\bf Part 3.  The treatment  of $\|L_0v(t, \cdot)\|_{L^2(\R^3)}$ }

\vskip 0.2 true cm

We start to show
\begin{equation}\label{e:q59'}
\|L_0v(t, \cdot)\|_{L^2(\R^3)} \lesssim t^{-\f{\mu}{2}}\left\|v_0\right\|_{Z, 1, 2}+t^{-\f{\mu}{2}}\left\|v_1\right\|_{Z, 1,(\f65, 2)}+t^{-\f{\mu}{2}}\left\|v_0+v_1\right\|_{Z, 1,(\f{6}{3+\mu}, 2)}\quad \text{for}\, \f{14}{5}\leq\mu<3
\end{equation}
and
\begin{equation}\label{e:q59}
\|L_0v(t, \cdot)\|_{L^2(\R^3)} \lesssim t^{-\f{\mu}{2}}\left\|v_0\right\|_{Z, 1, 2}+t^{-\f{\mu}{2}}\left\|v_1\right\|_{Z, 1,(\f65, 2)}+t^{\f32-\f{3}{1+\varepsilon_1}}\left\|v_0+v_1\right\|_{Z, 1,(1+\varepsilon_1, 2)}\quad \text{for}\, \mu\geq3.
\end{equation}

As in \eqref{e:q26}, in order to prove \eqref{e:q59'}-\eqref{e:q59}, it suffices to treat
\[
\|t\partial_t\hat v\|_{L^2(\R^3)}
\quad\text{and}\quad
\|\sum_{k=1}^3\xi_k\p_{\xi_k}\hat{v}\|_{L^2(\R^3)}.
\]
We next treat these two terms by the same decomposition as in \eqref{e:q47}.

For $\|t\p_t\hat{v}\|_{L^2(\R^3)}$, as in \eqref{e:q27}, it follows from \eqref{e:q54} and \eqref{e:q58} that for $\mu\geq3$,
\begin{equation}\label{e:q60}
\begin{aligned}
\left\|t\p_t\hat{v}(t, \cdot)\right\|_{L^2(A_1)}  &\lesssim\left\|(1+t|\xi|)^{-\f32}t|\xi|(\hat{v}_{0}+\hat{v}_{1})\right\|_{L^{2}}
\lesssim t^{\f32-\f{3}{1+\varepsilon_1}}\|v_0+v_1\|_{Z,1, (1+\varepsilon_1, 2)}
\end{aligned}
\end{equation}
and
$$
\left\|t\p_t\hat{v}(t, \cdot)\right\|_{L^2(A_2)}=t\left\|\p_t\hat{v}(t, \cdot)\right\|_{L^2(A_2)} \lesssim t^{\f32-\f{3}{1+\varepsilon_1}}\|v_0+v_1\|_{(1+\varepsilon_1, 2)}.
$$
This, together with \eqref{e:q60} and \eqref{e:q28}, derives that for $\mu\geq3$,
\begin{equation}\label{e:q61}
\|t\p_tv(t, \cdot)\|_{L^2(\R^3)} \lesssim t^{-\f{\mu}{2}}\left\|v_0\right\|_{Z, 1, 2}+t^{-\f{\mu}{2}}\left\|v_1\right\|_{Z, 1,(\f65, 2)}+t^{\f32-\f{3}{1+\varepsilon_1}}\left\|v_0+v_1\right\|_{Z, 1,(1+\varepsilon_1, 2)}.
\end{equation}
Similarly, as in \eqref{e:q56} and \eqref{e:q58'}, one has that for $\f{14}{5}\leq\mu<3$,
\begin{equation}\label{e:q61'}
\|t\p_tv(t, \cdot)\|_{L^2(A_1\bigcup A_2)} \lesssim\left\|(1+t|\xi|)^{-\f{\mu}{2}}t|\xi|(\hat{v}_{0}+\hat{v}_{1})\right\|_{L^{2}} \lesssim t^{-\f{\mu}{2}}\left\|v_0+v_1\right\|_{Z, 1,(\f{6}{3+\mu}, 2)}.
\end{equation}
Combining \eqref{e:q61'} and \eqref{e:q28} yields
\begin{equation}\label{e:q61''}
\|t\p_tv(t, \cdot)\|_{L^2(\R^3)} \lesssim t^{-\f{\mu}{2}}\left\|v_0\right\|_{Z, 1, 2}+t^{-\f{\mu}{2}}\left\|v_1\right\|_{Z, 1,(\f65, 2)}+t^{-\f{\mu}{2}}\left\|v_0+v_1\right\|_{Z, 1,(\f{6}{3+\mu}, 2)}.
\end{equation}

Next, we turn to deal with $\|\ds\sum_{k=1}^3\xi_k\p_{\xi_k}\hat{v}\|_{L^2(\R^3)}$.
It follows from \eqref{equ:q9-5'} and Lemma \ref{lem1-1}  that for $k=1,2,3$,
\begin{equation}\label{e:q62}
\begin{aligned}
& |\p_{\xi_k}\Psi_0(t, 1, \xi)|
\lesssim \frac{|\xi_k|}{|\xi|} t^\rho J_{-\rho+1}(|\xi|) J_\rho(t|\xi|)+\frac{|\xi_k|}{|\xi|} t^\rho J_{\rho-1}(|\xi|) J_{-\rho}(t|\xi|)
+|\xi_k| t^\rho J_{-\rho}(|\xi|) J_\rho(t|\xi|)\\
&\quad +t^{\rho}\f{|\xi_k|}{|\xi|} J_{-\rho+1}(|\xi|) J_{\rho}(t|\xi|) +t^{\rho+1}|\xi_k| J_{\rho-1}(t|\xi|) J_{-\rho+1}(|\xi|)
+t^\rho|\xi_k| J_{\rho-2}(|\xi|) J_{-\rho}(t|\xi|)\\
&\quad +t^{\rho+1}|\xi_k| J_{-\rho-1}(t|\xi|) J_{\rho-1}(|\xi|)+t^{\rho}\f{|\xi_k|}{|\xi|} J_{-\rho}(t|\xi|) J_{\rho-1}(|\xi|)
\end{aligned}
\end{equation}
and
\begin{equation}\label{e:q63}
\begin{aligned}
&|\p_{\xi_k}\Psi_1(t, 1, \xi)|
\lesssim \frac{|\xi_k|}{|\xi|} t^\rho J_{-\rho-1}(|\xi|) J_\rho(t|\xi|)+\frac{|\xi_k|}{|\xi|} t^{\rho+1} J_{\rho-1}(t|\xi|) J_{-\rho}(|\xi|)+\f{|\xi_k|}{|\xi|} t^\rho J_{\rho-1}(|\xi|) J_{-\rho}(t|\xi|)\\
&\quad +t^{\rho+1}\f{|\xi_k|}{|\xi|} J_{-\rho-1}(t|\xi|) J_{\rho}(|\xi|) +t^{\rho}\f{|\xi_k|}{|\xi|^2} J_{\rho}(t|\xi|) J_{-\rho}(|\xi|)+t^\rho\f{|\xi_k|}{|\xi|^2} J_{-\rho}(|\xi|) J_{\rho}(t|\xi|)\\
&\quad +t^{\rho}\f{|\xi_k|}{|\xi|^2} J_{-\rho}(t|\xi|) J_{\rho}(|\xi|)+t^{\rho}\f{|\xi_k|}{|\xi|^2} J_{-\rho}(t|\xi|) J_{\rho}(|\xi|).
\end{aligned}
\end{equation}
As in  \eqref{e:q2'}, by using \eqref{equ:q4}, \eqref{e:q62}-\eqref{e:q63} and  Lemma \ref{lem1}, we obtain that for $\f{14}{5}\leq\mu<3$,
\begin{equation}\label{e:q64'}
\begin{aligned}
\big\|\sum_{k=1}^3\xi_k\p_{\xi_k}\hat{v}\big\|_{L^2(A_1)}&=\big\|  \sum_{k=1}^{3}\left( \xi_k (\partial_{\xi_k} \Psi_0) \hat{v}_0
+ \xi_k (\partial_{\xi_k} \Psi_1) \hat{v}_1
+ \xi_k \Psi_0 \partial_{\xi_k} \hat{v}_0
+ \xi_k \Psi_1 \partial_{\xi_k} \hat{v}_1\right)\big\|_{L^2(A_1)}\\
& \lesssim \sum_{k=1}^3 \big\|(1+t|\xi|)^{-\f{\mu}{2}}t|\xi_k|(\hat{v}_{0}+\hat{v}_{1})\big\|_{L^{2}}+
\big\|(1+t|\xi|)^{-\f{\mu}{2}}\sum_{k=1}^3\widehat{(x_k\p_k)(v_0+v_1)}\big\|_{L^{2}}\\
&\quad+ \big\|(1+t|\xi|)^{-\f{\mu}{2}}\widehat{3(v_0+v_1)}\big\|_{L^{2}}\\
&\lesssim  t^{-\f{\mu}{2}}\left\|v_0+v_1\right\|_{Z, 1,(\f{6}{3+\mu}, 2)}.
\end{aligned}
\end{equation}
Similarly,
\begin{equation}\label{e:q65'}
\begin{aligned}
\big\|\sum_{k=1}^3\xi_k\p_{\xi_k}\hat{v}\big\|_{L^2(A_2)}
&\lesssim \big\|(1+t|\xi|)^{-\f{\mu}{2}}(\hat{v}_{0}+\hat{v}_{1})\big\|_{L^{2}}+
\big\|(1+t|\xi|)^{-\f{\mu}{2}}\sum_{k=1}^3\widehat{(x_k\p_k)(v_0+v_1)}\big\|_{L^{2}}\\
&\lesssim  t^{-\f{\mu}{2}}\left\|v_0+v_1\right\|_{Z, 1,(\f{6}{3+\mu}, 2)}.
\end{aligned}
\end{equation}
When $\mu\geq3$, we proceed as in \eqref{e:q34}-\eqref{e:q35}. By $(1+t|\xi|)^{-\frac{\mu}{2}} \le (1+t|\xi|)^{-\frac{3}{2}}$,
one then has
\begin{equation}\label{e:q64}
\begin{aligned}
\big\|\sum_{k=1}^3\xi_k\p_{\xi_k}\hat{v}\big\|_{L^2(A_1)}& \lesssim \sum_{k=1}^3 \big\|(1+t|\xi|)^{-\f{3}{2}}t|\xi_k|(\hat{v}_{0}+\hat{v}_{1})\big\|_{L^{2}}+
\big\|(1+t|\xi|)^{-\f{3}{2}}\sum_{k=1}^3\widehat{(x_k\p_k)(v_0+v_1)}\big\|_{L^{2}}\\
&\quad+ \big\|(1+t|\xi|)^{-\f{3}{2}}\widehat{3(v_0+v_1)}\big\|_{L^{2}}\\
&\lesssim  t^{\f32-\f{3}{1+\varepsilon_1}}\left\|v_0+v_1\right\|_{Z, 1,(1+\varepsilon_1, 2)}
\end{aligned}
\end{equation}
and
\begin{equation}\label{e:q65}
\begin{aligned}
\big\|\sum_{k=1}^3\xi_k\p_{\xi_k}\hat{v}\big\|_{L^2(A_2)}
&\lesssim \big\|(1+t|\xi|)^{-\f{3}{2}}(\hat{v}_{0}+\hat{v}_{1})\big\|_{L^{2}}+
\big\|(1+t|\xi|)^{-\f{3}{2}}\sum_{k=1}^3\widehat{(x_k\p_k)(v_0+v_1)}\big\|_{L^{2}}\\
&\lesssim  t^{\f32-\f{3}{1+\varepsilon_1}}\left\|v_0+v_1\right\|_{Z, 1,(1+\varepsilon_1, 2)}.
\end{aligned}
\end{equation}
Collecting \eqref{e:q64'}-\eqref{e:q65},  \eqref{e:q38} and \eqref{e:q61},
\eqref{e:q59'}-\eqref{e:q59} can be obtained.

\vskip 0.2 true cm

{\bf Part 4.  The treatment  of $\|L_jv(t, \cdot)\|_{L^2(\R^3)}(j=1, 2, 3)$ }

\vskip 0.2 true cm

We now claim
\begin{equation}\label{e:q66'}
\|L_jv(t, \cdot)\|_{L^2(\R^3)} \lesssim t^{-\f{\mu}{2}}\left\|v_0\right\|_{Z, 1, 2}+t^{-\f{\mu}{2}}\left\|v_1\right\|_{Z, 1,(\f65, 2)}+t^{-\f{\mu}{2}}\left\|v_0+v_1\right\|_{Z, 1,(\f{6}{3+\mu}, 2)}\quad \text{for}\, \f{14}{5}\leq\mu<3,
\end{equation}
and
\begin{equation}\label{e:q66}
\|L_jv(t, \cdot)\|_{L^2(\R^3)} \lesssim t^{-\f{\mu}{2}}\left\|v_0\right\|_{Z, 1, 2}+t^{-\f{\mu}{2}}\left\|v_1\right\|_{Z, 1,(\f65, 2)}+t^{\f32-\f{3}{1+\varepsilon_1}}\left\|v_0+v_1\right\|_{Z, 1,(1+\varepsilon_1, 2)}\quad \text{for}\, \mu\geq3.
\end{equation}

Following the same procedure as in \eqref{e:q40} and \eqref{e:q47}, the proof of \eqref{e:q66'}-\eqref{e:q66} can be reduced to estimate  $\|t|\xi_3|\hat{v}\|_{L^2(A_1\bigcup A_2)}$ since the decay estimates for $\|t\p_t\hat{v}\|_{L^2(\R^3)}$ have already been
derived in \eqref{e:q61} and \eqref{e:q61''}. For $\f{14}{5}\leq\mu<3$,
   \eqref{e:q49} and  \eqref{e:q51} imply
\begin{equation}\label{e:q67'}
\begin{aligned}
\left\|t|\xi_3|\hat{v}(t, \cdot)\right\|_{L^2(A_1)}\lesssim\left\|(1+t|\xi|)^{-\f{\mu}{2}}t|\xi|(\hat{v}_{0}+\hat{v}_{1})\right\|_{L^{2}(A_1)}
\lesssim t^{-\f{\mu}{2}}\left\|v_0+v_1\right\|_{Z, 1,(\f{6}{3+\mu}, 2)}
\end{aligned}
\end{equation}
and
\begin{equation}\label{e:q68'}
\begin{aligned}
\left\|t|\xi_3|\hat{v}(t, \cdot)\right\|_{L^2(A_2)} & \leq\left\|(1+t|\xi|)^{\f{\mu}{2}}(1+t|\xi|)^{-\f{\mu}{2}}t|\xi|(\hat{v}_{0}+\hat{v}_{1})\right\|_{L^{2}} \lesssim t^{-\f{\mu}{2}}\|v_0+v_1\|_{Z, 1, (\f{6}{3+\mu}, 2)}.
\end{aligned}
\end{equation}
Similarly, we have that for $\mu\geq3$,
\begin{equation}\label{e:q67}
\begin{aligned}
\left\|t|\xi_3|\hat{v}(t, \cdot)\right\|_{L^2(A_1)}  \lesssim\left\|(1+t|\xi|)^{-\f32}t|\xi|(\hat{v}_{0}+\hat{v}_{1})\right\|_{L^{2}(A_1)}
\lesssim t^{\f32-\f{3}{1+\varepsilon_1}}\left\|v_0+v_1\right\|_{Z, 1,(1+\varepsilon_1, 2)}
\end{aligned}
\end{equation}
and
\begin{equation}\label{e:q68}
\begin{aligned}
\left\|t|\xi_3|\hat{v}(t, \cdot)\right\|_{L^2(A_2)} & \leq\left\|(1+t|\xi|)^{\f32}(1+t|\xi|)^{-\f32}t|\xi|(\hat{v}_{0}+\hat{v}_{1})\right\|_{L^{2}} \lesssim t^{\f32-\f{3}{1+\varepsilon_1}}\|v_0+v_1\|_{Z, 1, (1+\varepsilon_1, 2)}.
\end{aligned}
\end{equation}

Therefore, combining \eqref{e:q67'}-\eqref{e:q68} and \eqref{e:q42} gives
\begin{equation}\label{e:q69}
\|t|\xi_3|\hat{v}\|_{L^2(\R^3)} \lesssim\begin{cases} t^{-\f{\mu}{2}}\left\|v_0\right\|_{Z, 1, 2}+t^{-\f{\mu}{2}}\left\|v_1\right\|_{Z, 1,(\f65, 2)}+t^{-\f{\mu}{2}}\left\|v_0+v_1\right\|_{Z, 1,(\f{6}{3+\mu}, 2)},& \f{14}{5}\leq\mu<3,\\ t^{-\f{\mu}{2}}\left\|v_0\right\|_{Z, 1, 2}+t^{-\f{\mu}{2}}\left\|v_1\right\|_{Z, 1,(\f65, 2)}+t^{\f32-\f{3}{1+\varepsilon_1}}\left\|v_0+v_1\right\|_{Z, 1,(1+\varepsilon_1, 2)},& \mu\geq3.
\end{cases}
\end{equation}
Together with \eqref{e:q61} and \eqref{e:q61''}, this completes the proof of \eqref{e:q66'}-\eqref{e:q66}.
\vskip 0.2 true cm

{\bf Part 5.  The treatment of $\|\Omega_{kj}v(t, \cdot)\|_{L^2(\R^3)}$$(1\le k<j\le3)$ }

\vskip 0.2 true cm
We now start to prove
\begin{equation}\label{e:q70'}
\|\Omega_{kj}v(t, \cdot)\|_{L^2(\R^3)} \lesssim t^{-\f{\mu}{2}}\left\|v_0\right\|_{Z, 1, 2}+t^{-\f{\mu}{2}}\left\|v_1\right\|_{Z, 1,(\f65, 2)}+t^{-\f{\mu}{2}}\left\|v_0+v_1\right\|_{Z, 1,(\f{6}{3+\mu}, 2)}\quad \text{for}\, \f{14}{5}\leq\mu<3
\end{equation}
and
\begin{equation}\label{e:q70}
\|\Omega_{kj}v(t, \cdot)\|_{L^2(\R^3)} \lesssim t^{-\f{\mu}{2}}\left\|v_0\right\|_{Z, 1, 2}+t^{-\f{\mu}{2}}\left\|v_1\right\|_{Z, 1,(\f65, 2)}+t^{\f32-\f{3}{1+\varepsilon_1}}\left\|v_0+v_1\right\|_{Z, 1,(1+\varepsilon_1, 2)}\quad \text{for}\, \mu\geq3.
\end{equation}

As treated in \eqref{e:q46}  and \eqref{e:q47}, it suffices to deal with the term $\|t|\xi_j|\hat{v}(t, \cdot)\|_{L^2(A_1\bigcup A_2)}$$(1\leq j\leq3)$.
Note that by \eqref{e:q67'}-\eqref{e:q68}, it holds that for $j=1,2,3$ and $\f{14}{5}\leq\mu<3$,
$$
\|t|\xi_j|\hat{v}(t, \cdot)\|_{L^2(A_1\bigcup A_2)} \lesssim t^{-\f{\mu}{2}}\left\|v_0+v_1\right\|_{Z, 1,(\f{6}{3+\mu}, 2)},
$$
and for $\mu\geq3$,
$$
\|t|\xi_j|\hat{v}(t, \cdot)\|_{L^2(A_1\bigcup A_2)} \lesssim t^{\f32-\f{3}{1+\varepsilon_1}}\left\|v_0+v_1\right\|_{Z, 1,(1+\varepsilon_1, 2)}.
$$
This, together with \eqref{e:q42}, yields \eqref{e:q70'} and \eqref{e:q70} immediately.
Therefore, by collecting the results of  Parts 1-5, the proof of Lemma \ref{lem2-1} is completed.
\end{proof}

Based on  Lemmas \ref{lem2}--\ref{lem2-1},  we now deal with the first-order derivatives of solution $v$
to \eqref{equ:q16}.
\begin{lemma}\label{lem3}
Let $v$ be the solution to \eqref{equ:q16} and $\varepsilon_1\in(0, 1)$. Then for $\f{14}{5}\leq\mu<3$,
\begin{equation}\label{e:q71'}
\|\p v(t, \cdot)\|_{Z, 1, 2} \lesssim t^{-\f{\mu}{2}}\left\|\nabla v_0\right\|_{Z, 1, 2}+t^{-\f{\mu}{2}}\left\| v_1\right\|_{Z, 1, 2}+t^{-\f{\mu}{2}}\left\|v_0+v_1\right\|_{Z, 1,(\f{6}{3+\mu},2)},
\end{equation}
and for $\mu\geq3$,
\begin{equation}\label{e:q71}
\|\p v(t, \cdot)\|_{Z, 1, 2} \lesssim t^{-\f{\mu}{2}}\left\|\nabla v_0\right\|_{Z, 1, 2}+t^{-\f{\mu}{2}}\left\| v_1\right\|_{Z, 1, 2}+t^{\f32-\f{3}{1+\varepsilon_1}}\left\|v_0+v_1\right\|_{Z, 1,(1+\varepsilon_1,2)}.
\end{equation}
\end{lemma}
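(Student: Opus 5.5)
We treat each component of $\p v=(\p_t v,\p_1 v,\p_2 v,\p_3 v)$ in turn. The spatial derivatives are easy: $\p_k v$ solves \eqref{equ:q16} with data $(\p_k v_0,\p_k v_1)$, because $\p_k$ commutes with $\square+\f{\mu}{t}\p_t$. Lemmas \ref{lem2}--\ref{lem2-1} then bound $\|\p_k v\|_{Z,1,2}$ by
\[
t^{-\f{\mu}{2}}\|\p_k v_0\|_{Z,1,2}+t^{-\f{\mu}{2}}\|\p_k v_1\|_{Z,1,(\f65,2)}+(\text{low-frequency term with }\p_k(v_0+v_1)).
\]
This route needs two adjustments.

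\textbf{Step 1 (the $v_1$ term in the zone $|\xi|\ge1$).} The $\p_k v_1$ contribution only comes from this zone. There the multiplier $\Psi_1$ satisfies $|\Psi_1|\lesssim t^{-\f{\mu}{2}}|\xi|^{-1}$ by \eqref{e:q5}, so $|\xi_k\Psi_1|\lesssim t^{-\f{\mu}{2}}$. Hence I will not pass to the $(\f65,2)$ norm as in \eqref{e:q7}. Instead I estimate this piece directly by $t^{-\f{\mu}{2}}\|v_1\|_{Z,1,2}$, redoing the computations \eqref{e:q18}--\eqref{e:q20}, \eqref{e:q28}, \eqref{e:q38} and \eqref{e:q42}. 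The vector fields $Z$ falling on $\hat v_1$ produce $\xi\p_\xi\hat v_1$ and $t\xi\hat v_1$. These are converted into $\widehat{Z v_1}$ at $t=1$, using that the support of the data lies in $B(0,1)$, exactly as in Lemma \ref{lem2}. The $v_0$ contribution there is $t^{-\f{\mu}{2}}\|\nabla v_0\|_{Z,1,2}$.

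\textbf{Step 2 (the low-frequency zones $D_1,D_3$, resp.\ $A_1,A_2$).} Here the derivative symbol $|\xi_k|$ is bounded by $1$. So I keep the undifferentiated bound $(1+t|\xi|)^{-\f{\mu}{2}}|\hat v_0+\hat v_1|$, or the $-\f32$ version when $\mu\ge3$, exactly as in \eqref{e:q15}, \eqref{e:q55}, \eqref{e:q57}. This yields the last term $t^{-\f{\mu}{2}}\|v_0+v_1\|_{Z,1,(\f6{3+\mu},2)}$, resp.\ $t^{\f32-\f3{1+\ve_1}}\|v_0+v_1\|_{Z,1,(1+\ve_1,2)}$, with no derivatives on the data.

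\textbf{Step 3 (the time derivative).} The component $\p_t v$ does not solve the same equation. So I work directly with the multipliers $\p_t\Psi_j$ from \eqref{equ:q9-4}/\eqref{equ:q9-5'}, for which Lemmas \ref{lem2}--\ref{lem2-1} already provide symbol bounds. For $Z\p_t v$ with $Z\in\{\p_t,L_0,L_j\}$ I also need $\p_t^2\Psi_j$, given by \eqref{equ:q9-5} and \eqref{l-3}. These are handled by the same Bessel asymptotics of Lemma \ref{lem1}. In the high-frequency zone they give $|\p_t^2\Psi_0|\lesssim t^{-\f{\mu}{2}}|\xi|^2$ and $|\p_t^2\Psi_1|\lesssim t^{-\f{\mu}{2}}|\xi|$, using the term $t^{-1}\p_t\Psi_j$ as a harmless lower-order piece. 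In the low zones the symbols are bounded by $(t|\xi|)^{-\f{\mu}{2}}$ or by $t^{-1}$. Alternatively, one can eliminate $\p_t^2 v$ via the equation, $\p_t^2v=\Delta v-\f{\mu}{t}\p_tv$, which reduces $\p_t Z\p_t v$ to already treated quantities.

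For $\xi\p_\xi$ acting on $\p_t\Psi_j$, the recurrence relations of Lemma \ref{lem1-1} give expansions analogous to \eqref{e:q32}--\eqref{e:q37}, with the Bessel orders shifted by one.

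\textbf{Main obstacle.} The delicate part is the low-frequency bookkeeping for $t\p_t(\p_t v)$ and $t\xi_k\p_t\hat v$. One must check that the extra factor $t|\xi|$ is always absorbed, either by $(1+t|\xi|)^{-\f{\mu}{2}}$ or by the bound $t^{-1}$ from \eqref{e:q21}/\eqref{e:q58'}. The resulting factor $t\xi$ is then rewritten through $L_j(v_0+v_1)$ as in \eqref{e:q27}, which keeps only one vector field on the data. I would first verify the symbol bound $|\p_t^2\Psi_j|\lesssim t^{-2}$ in $t|\xi|\le1$ in the odd-integer (Bessel $Y_n$) case. Once these bounds are in hand, summing the zones exactly as in Parts 1--5 gives \eqref{e:q71'} and \eqref{e:q71}.
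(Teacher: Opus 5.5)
Your outline is essentially the paper's own proof. It uses the same frequency zones and the same symbol bounds for $\Psi_j$, $\p_t\Psi_j$, $\p_t^2\Psi_j$ and $\p_\xi\Psi_j$. It also uses $|\xi_k\Psi_1|\lesssim t^{-\mu/2}$ on $|\xi|\ge1$ to replace $\|v_1\|_{Z,1,(\frac65,2)}$ by $\|v_1\|_{Z,1,2}$, and $|\xi|\le1$ in the low zones. However, two steps you take over from Lemmas \ref{lem2}--\ref{lem2-1} do not hold; the paper uses them too, cf.\ \eqref{e:q73}, \eqref{e:q27}--\eqref{e:q28}, \eqref{e:q89}.

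\textbf{First}, in Step 2, separate bounds on $|\Psi_0|$ and $|\Psi_1|$ only give $(1+t|\xi|)^{-\mu/2}(|\hat v_0|+|\hat v_1|)$. They never give $(1+t|\xi|)^{-\mu/2}|\hat v_0+\hat v_1|$, since $\Psi_0\neq\Psi_1$. For large $\mu$ this cannot be repaired, because \eqref{e:q71} itself fails. Indeed, $\Psi_0(t,1,0)=1$ and $\Psi_1(t,1,0)=\frac{1-t^{1-\mu}}{\mu-1}$. Take $v_1=-v_0$ with $\int v_0\,\mathrm{d}x\neq0$. Splitting $\Psi_0-\Psi_1$ into the regular and singular solutions of the Bessel ODE gives, on $|\xi|\le1$,
\[
\hat v(t,\xi)=a(\xi)\Lambda(t|\xi|)+r(t,\xi),\qquad \Lambda(z)=\Gamma(\nu+1)(2/z)^{\nu}J_\nu(z),\quad \nu=\tfrac{\mu-1}{2},
\]
with $a(\xi)\to\frac{\mu-2}{\mu-1}\hat v_0(0)\neq0$ as $\xi\to0$ and $|r|\lesssim t^{-\mu/2}$. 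Hence $\|\nabla v(t)\|_{L^2}\ge c\,t^{-5/2}-Ct^{-\mu/2}$. For $\mu>5$ this contradicts \eqref{e:q71}, whose right-hand side is $O(t^{-\mu/2})$ for these data. What the symbol bounds actually yield in the last term is $\|v_0\|_{Z,1,(1+\varepsilon_1,2)}+\|v_1\|_{Z,1,(1+\varepsilon_1,2)}$, resp.\ the $(\frac{6}{3+\mu},2)$ norms.

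\textbf{Second}, the factor $t$ in $L_0$ and $L_j$ is lost. In Step 1 you convert $t\xi\hat v_1$ into ``$\widehat{Zv_1}$ at $t=1$'', and in your main obstacle you rewrite $t\xi$ through $L_j(v_0+v_1)$ as in \eqref{e:q27}. Neither is legitimate. Once $L_j=t\p_j+x_j\p_t$ is split using $|x|\lesssim t$ (or $L_0$ into $t\p_t$ and $x\cdot\nabla$), you must bound pieces such as $\|t\xi_j\xi_k\Psi_1\hat v_1\|_{L^2(|\xi|\ge1)}$ and $\|t\p_t\p_kv\|_{L^2}$. There $|\Psi_1|\sim t^{-\mu/2}|\xi|^{-1}$ up to oscillation, so these pieces are genuinely of order $t^{1-\mu/2}$ times a norm of $\nabla v_1$. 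The support condition controls $x$, not $t$. The identity $t\p_jf=L_jf$ holds only if $L_j$ is taken at time $t$, i.e.\ with a data norm that grows like $t$.

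The low zone does not absorb the factor either. On $\{\frac12\le|\xi|\le1\}$ one has $|t\p_t\Psi_j|\sim(t|\xi|)^{1-\mu/2}\sim t^{1-\mu/2}$. This exceeds $t^{-\mu/2}$, and for $3\le\mu<5$ and small $\varepsilon_1$ it also exceeds $t^{\frac32-\frac{3}{1+\varepsilon_1}}$.

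The loss is caused by the splitting. To avoid it, keep each vector field as a single multiplier so that the growth in $t$ cancels:
\begin{itemize}
\item $\Omega_{kl}$ commutes with the radial $\Psi_j$.
\item $\widehat{L_0v}=(t\p_t-\xi\cdot\nabla_\xi-3)\hat v$, and $t\p_t-\xi\cdot\nabla_\xi$ annihilates every function of $t|\xi|$.
\item $\widehat{L_jv}=i(t\xi_j+\p_{\xi_j}\p_t)\hat v$ maps $e^{\pm it|\xi|}$ to $\pm i\frac{\xi_j}{|\xi|}e^{\pm it|\xi|}$, so the powers of $t$ cancel on the leading Hankel asymptotics.
\end{itemize}
Without these cancellations your estimates come out a factor $t$ too weak.
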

\begin{proof}
Following the proof of Lemma \ref{lem2}, we decompose the frequency space $\mathbb{R}_{\xi}^{3}$ into three zones: 
$D_1$, $D_2$ and $D_3$.
In view of the definition of $\|\p v\|_{Z, 1, 2}$, we distinguish  the following five parts.
\vskip 0.2 true cm

{\bf Part 1.  The treatment  of $\|\p v(t, \cdot)\|_{L^2(\R^3)}$ }

\vskip 0.2 true cm

We next prove that for $\f{14}{5}\leq\mu<3$,
\begin{equation}\label{e:q72'}
\|\p v(t, \cdot)\|_{L^2} \lesssim t^{-\f{\mu}{2}}\left\|\nabla v_0\right\|_{L^ 2}+t^{-\f{\mu}{2}}\left\| v_1\right\|_{L^ 2}+t^{-\f{\mu}{2}}\left\|v_0+v_1\right\|_{(\f{6}{3+\mu},2)},
\end{equation}
and for $\mu\geq3$,
\begin{equation}\label{e:q72}
\|\p v(t, \cdot)\|_{L^2} \lesssim t^{-\f{\mu}{2}}\left\|\nabla v_0\right\|_{L^ 2}+t^{-\f{\mu}{2}}\left\| v_1\right\|_{L^ 2}+t^{\f32-\f{3}{1+\varepsilon_1}}\left\|v_0+v_1\right\|_{(1+\varepsilon_1,2)}.
\end{equation}
For the zone $D_k$ $(k=1,3)$, by  \eqref{e:q1}, \eqref{e:q49} and \eqref{e:q10} , \eqref{e:q51} respectively, we obtain
\begin{equation}\label{e:q73'}
\begin{aligned}
\left\||\xi|\hat{v}(t, \cdot)\right\|_{L^2(D_k)}\leq\left\|(1+t|\xi|)^{-\f{\mu}{2}}(\hat{v_0}+\hat{v_1})\right\|_{L^2(D_k)}\lesssim t^{-\f{\mu}{2}}\left\|v_0+v_1\right\|_{(\f{6}{3+\mu},2)}\quad \text{for}\, \f{14}{5}\leq\mu<3
\end{aligned}
\end{equation}
and
\begin{equation}\label{e:q73}
\begin{aligned}
\left\||\xi|\hat{v}(t, \cdot)\right\|_{L^2(D_k)}&\leq\left\|(1+t|\xi|)^{-\f{\mu}{2}}(\hat{v_0}+\hat{v_1})\right\|_{L^2(D_k)}\\
&\leq\left\|(1+t|\xi|)^{-\f32}(\hat{v_0}+\hat{v_1})\right\|_{L^2(D_k)}\lesssim t^{\f32-\f{3}{1+\varepsilon_1}}\left\|v_0+v_1\right\|_{(1+\varepsilon_1,2)}\quad \text{for}\, \mu \geq3.
\end{aligned}
\end{equation}
Meanwhile, for the zone $D_2$, it follows directly from \eqref{e:q5} that for $\mu\geq\f{14}{5}$,
$$
\begin{aligned}
\left\||\xi|\hat{v}(t, \cdot)\right\|_{L^2(D_2)}\lesssim t^{-\frac{\mu}{2}}\left\||\xi|\hat{v}_{0}\right\|_{L^{2}}+t^{-\f{\mu}{2}} \|\hat{v}_{1}\|_{L^{2}}\lesssim t^{-\frac{\mu}{2}}\left\|\nabla v_{0}\right\|_{L^{2}}+ t^{-\f{\mu}{2}}\|v_1\|_{L^{2}}.
\end{aligned}
$$
This, together with \eqref{e:q73'}-\eqref{e:q73}, yields
$$
\left\||\xi|\hat{v}(t, \cdot)\right\|_{L^2(\R^3)}\lesssim \begin{cases} t^{-\frac{\mu}{2}}\left\|\nabla v_{0}\right\|_{L^{2}}+ t^{-\f{\mu}{2}}\|v_1\|_{L^{2}} + t^{-\f{\mu}{2}}\left\|v_0+v_1\right\|_{(\f{6}{3+\mu},2)},& \f{14}{5}\leq\mu<3,\\
t^{-\frac{\mu}{2}}\left\|\nabla v_{0}\right\|_{L^{2}}+ t^{-\f{\mu}{2}}\|v_1\|_{L^{2}}+ t^{\f32-\f{3}{1+\varepsilon_1}}\left\|v_0+v_1\right\|_{(1+\varepsilon_1,2)},& \mu \geq3.
\end{cases}
$$
Analogously to the treatment of $\left\||\xi|\hat{v}(t, \cdot)\right\|_{L^2(\R^3)}$,  we have
$$
\left\|\p_t\hat{v}(t, \cdot)\right\|_{L^2(\R^3)}\lesssim \begin{cases} t^{-\frac{\mu}{2}}\left\|\nabla v_{0}\right\|_{L^{2}}+ t^{-\f{\mu}{2}}\|v_1\|_{L^{2}} + t^{-\f{\mu}{2}}\left\|v_0+v_1\right\|_{(\f{6}{3+\mu},2)},&  \f{14}{5}\leq\mu<3,\\
t^{-\frac{\mu}{2}}\left\|\nabla v_{0}\right\|_{L^{2}}+ t^{-\f{\mu}{2}}\|v_1\|_{L^{2}}+ t^{\f32-\f{3}{1+\varepsilon_1}}\left\|v_0+v_1\right\|_{(1+\varepsilon_1,2)},&  \mu \geq3.
\end{cases}
$$
Then \eqref{e:q72'} and \eqref{e:q72} follow from Parseval's theorem.

\vskip 0.2 true cm

{\bf Part 2.  The treatment  of $\|\p(\p v)(t, \cdot)\|_{L^2(\R^3)}$ }

\vskip 0.2 true cm

We now show
\begin{equation}\label{e:q74'}
\|\p(\p v)(t, \cdot)\|_{L^2(\R^3)} \lesssim t^{-\f{\mu}{2}}\left\|\nabla v_0\right\|_{Z, 1, 2}+t^{-\f{\mu}{2}}\left\| v_1\right\|_{Z, 1, 2}+t^{-\f{\mu}{2}}\left\|v_0+v_1\right\|_{Z, 1,(\f{6}{3+\mu},2)}\quad \text{for}\, \f{14}{5}\leq\mu<3
\end{equation}
and
\begin{equation}\label{e:q74}
\|\p(\p v)(t, \cdot)\|_{L^2(\R^3)}\lesssim t^{-\f{\mu}{2}}\left\|\nabla v_0\right\|_{Z, 1, 2}+t^{-\f{\mu}{2}}\left\| v_1\right\|_{Z, 1, 2}+t^{\f32-\f{3}{1+\varepsilon_1}}\left\|v_0+v_1\right\|_{Z, 1,(1+\varepsilon_1,2)} \quad \text{for}\, \mu\geq3.
\end{equation}
Note that
\begin{equation}\label{e:q75}
\begin{aligned}
\|\p(\p v)(t, \cdot)\|_{L^2(\R^3)} \lesssim \|\p_t^2\hat{v}\|_{L^2(\R^3)}+\left\||\xi|\p_t\hat{v}(t, \cdot)\right\|_{L^2(\R^3)}+ \sum_{k,j=1}^3\left\||\xi_j||\xi_k|\hat{v}(t, \cdot)\right\|_{L^2(\R^3)}.
\end{aligned}
\end{equation}
When $\mu \geq \f{14}{5}$, it follows from \eqref{equ:q9-1}, \eqref{equ:q9-5}, \eqref{l-3} and Lemma \ref{lem1} that for the zone $D_1$,
\begin{equation}\label{e:q76}
\begin{aligned}
& \left|\p_t^2\Psi_0(t, 1,\xi)\right| \lesssim t^\rho|\xi|^3(t|\xi|)^{-\f{1}{2}}|\xi|^{\rho-1}\leq (t|\xi|)^{-\frac{\mu}{2}}\cdot|\xi|, \\
& \left|\p_t^2\Psi_1(t, 1, \xi)\right|  \lesssim t^\rho|\xi|^2(t|\xi|)^{-\f{1}{2}}|\xi|^{\rho}\leq (t|\xi|)^{-\frac{\mu}{2}}\cdot|\xi|.
\end{aligned}
\end{equation}
Meanwhile, for $D_2$ and $D_3$, we can obtain, respectively,
\begin{equation}\label{e:q77}
\begin{aligned}
& \left|\p_t^2\Psi_0(t, 1,\xi)\right| \lesssim t^{-\frac{\mu}{2}}|\xi|\cdot|\xi|, \\
& \left|\p_t^2\Psi_1(t, 1, \xi)\right|  \lesssim t^{-\frac{\mu}{2}}\cdot|\xi|
\end{aligned}
\end{equation}
and
\begin{equation}\label{e:q78}
\begin{aligned}
& \left|\p_t^2\Psi_0(t, 1,\xi)\right| \lesssim t^{-1}\cdot|\xi|, \\
& \left|\p_t^2\Psi_1(t, 1, \xi)\right|  \lesssim t^{-1}\cdot|\xi|.
\end{aligned}
\end{equation}
Comparing   \eqref{e:q76}--\eqref{e:q78} with  \eqref{e:q13}, \eqref{e:q17}, \eqref{e:q21}, \eqref{e:q54} and \eqref{e:q58'}, the estimate for $\left\||\xi|\p_t\hat{v}(t, \cdot)\right\|_{L^2(\R^3)}$ can be taken as for $\|\p_t^2\hat{v}\|_{L^2(\R^3)}$. Thus, to prove  \eqref{e:q74'}-\eqref{e:q74}, it suffices  to estimate $\left\|\p_t^2\hat{v}(t, \cdot)\right\|_{L^2(\R^3)}$ and $\left\||\xi_j||\xi_k|\hat{v}(t, \cdot)\right\|_{L^2(\R^3)}$ $(k, j=1, 2, 3)$.

For $D_1$, in view of  \eqref{e:q76}, \eqref{e:q1} and \eqref{e:q49}, it holds that for  $k,j=1,2,3$ and $\f{14}{5}\leq\mu<3$,
$$
\begin{aligned}
\left\|\p_t^2\hat{v}(t, \cdot)\right\|_{L^2(D_1)}
\lesssim \big\|(1+t|\xi|)^{-\f{\mu}{2}}|\xi|(\hat{v}_{0}+\hat{v}_{1})\big\|_{L^{2}}
\lesssim t^{-\f{\mu}{2}}\left\|v_0+v_1\right\|_{(\f{6}{3+\mu},2)}
\end{aligned}
$$
and
$$
\begin{aligned}
\left\||\xi_j||\xi_k|\hat{v}(t, \cdot)\right\|_{L^2(D_1)}
\lesssim \big\|(1+t|\xi|)^{-\f{\mu}{2}}(\hat{v}_{0}+\hat{v}_{1})\big\|_{L^{2}}
\lesssim t^{-\f{\mu}{2}}\left\|v_0+v_1\right\|_{(\f{6}{3+\mu},2)}.
\end{aligned}
$$
Similarly, for $\mu\geq3$, one has
$$
\begin{aligned}
\left\|\p_t^2\hat{v}(t, \cdot)\right\|_{L^2(D_1)}
&\lesssim \big\|(1+t|\xi|)^{-\f{\mu}{2}}|\xi|(\hat{v}_{0}+\hat{v}_{1})\big\|_{L^{2}} \\
&\lesssim \big\|(1+t|\xi|)^{-\f{3}{2}}(\hat{v}_{0}+\hat{v}_{1})\big\|_{L^{2}} \\
&
\lesssim t^{\f32-\f{3}{1+\varepsilon_1}}\left\|v_0+v_1\right\|_{(1+\varepsilon_1,2)}
\end{aligned}
$$
and
$$
\begin{aligned}
\left\||\xi_j||\xi_k|\hat{v}(t, \cdot)\right\|_{L^2(D_1)}
\lesssim \big\|(1+t|\xi|)^{-\f{3}{2}}(\hat{v}_{0}+\hat{v}_{1})\big\|_{L^{2}}
\lesssim t^{\f32-\f{3}{1+\varepsilon_1}}\left\|v_0+v_1\right\|_{(1+\varepsilon_1,2)}.
\end{aligned}
$$
For $D_2$ and $\mu\geq\f{14}{5}$, applying \eqref{e:q77} and \eqref{e:q5} yields
$$
\begin{aligned}
\left\|\p_t^2\hat{v}(t, \cdot)\right\|_{L^2(D_2)}
\lesssim t^{-\frac{\mu}{2}}\left\||\xi||\xi|\hat{v}_{0}\right\|_{L^{2}}+t^{-\f{\mu}{2}} \||\xi|\hat{v}_{1}\|_{L^{2}}\lesssim t^{-\f{\mu}{2}}\left\|\nabla v_0\right\|_{Z, 1, 2}+t^{-\f{\mu}{2}}\left\|v_1\right\|_{Z,1,2}
\end{aligned}
$$
and
$$
\left\||\xi_j||\xi_k|\hat{v}(t, \cdot)\right\|_{L^2(D_2)}\lesssim t^{-\frac{\mu}{2}}\left\||\xi_j||\xi|v_{0}\right\|_{L^{2}}+t^{-\f{\mu}{2}} \||\xi_j|\hat{v}_{1}\|_{L^{2}}\lesssim t^{-\f{\mu}{2}}\left\|\nabla v_0\right\|_{Z, 1, 2}+t^{-\f{\mu}{2}}\left\|v_1\right\|_{Z,1,2}.
$$
Furthermore, in $D_3$, by \eqref{e:q78}, \eqref{e:q10} and \eqref{e:q51}, we have
$$
\left\|\p_t^2\hat{v}\right\|_{L^2(D_3)}\lesssim \begin{cases} t^{-2}\left\|(1+t|\xi|)^{-\f{\mu}{2}}(\hat{v}_{0}+\hat{v}_{1})\right\|_{L^{2}}
\lesssim t^{-\f{\mu}{2}-2}\left\|v_0+v_1\right\|_{(\f{6}{3+\mu},2)},& \f{14}{5}\leq\mu<3,\\ t^{-2}\left\|(1+t|\xi|)^{-\f32}(\hat{v}_{0}+\hat{v}_{1})\right\|_{L^{2}} \lesssim t^{\f32-\f{3}{1+\varepsilon_1}-2}\left\|v_0+v_1\right\|_{(1+\varepsilon_1,2)},& \mu\geq3
\end{cases}
$$
and
$$
\left\||\xi_j||\xi_k|\hat{v}\right\|_{L^2(D_3)}\lesssim\begin{cases} t^{-2}\left\|(1+t|\xi|)^{-\f{\mu}{2}}(\hat{v}_{0}+\hat{v}_{1})\right\|_{L^{2}} \lesssim t^{-\f{\mu}{2}-2}\left\|v_0+v_1\right\|_{(\f{6}{3+\mu},2)},& \f{14}{5}\leq\mu<3,\\ t^{-2}\left\|(1+t|\xi|)^{-\f32}(\hat{v}_{0}+\hat{v}_{1})\right\|_{L^{2}} \lesssim t^{-\f12-\f{3}{1+\varepsilon_1}}\left\|v_0+v_1\right\|_{(1+\varepsilon_1,2)},& \mu\geq3.
\end{cases}
$$
Combining the above estimates yields \eqref{e:q74'}-\eqref{e:q74}.
\vskip 0.2 true cm

{\bf Part 3.  The treatment of $\|L_0(\p v)(t, \cdot)\|_{L^2(\R^3)}$}

\vskip 0.2 true cm

We now establish
\begin{equation}\label{e:q79'}
\|L_0(\p v)(t, \cdot)\|_{L^2(\R^3)}  \lesssim t^{-\f{\mu}{2}}\left\|\nabla v_0\right\|_{Z, 1, 2}+t^{-\f{\mu}{2}}\left\| v_1\right\|_{Z, 1, 2}+t^{-\f{\mu}{2}}\left\|v_0+v_1\right\|_{Z, 1,(\f{6}{3+\mu},2)} \quad \text{for}\, \f{14}{5}\leq\mu<3,
\end{equation}
and
\begin{equation}\label{e:q79}
\|L_0(\p v)(t, \cdot)\|_{L^2(\R^3)} \lesssim t^{-\f{\mu}{2}}\left\|\nabla v_0\right\|_{Z, 1, 2}+t^{-\f{\mu}{2}}\left\| v_1\right\|_{Z, 1, 2}+t^{\f32-\f{3}{1+\varepsilon_1}}\left\|v_0+v_1\right\|_{Z, 1,(1+\varepsilon_1,2)}\quad \text{for}\, \mu\geq3.
\end{equation}
Analogously to \eqref{e:q75} and \eqref{e:q26}, it suffices to estimate $\||\xi|v\|_{L^2(\R^3)}$, $\|t|\xi|\p_t\hat{v}\|_{L^2(\R^3)}$ and $\||\xi|\ds\sum_{k=1}^3\xi_k\p_{\xi_k}\hat{v}\|_{L^2(\R^3)}$.

By \eqref{e:q72'} and \eqref{e:q72}, the first term  $\||\xi|v\|_{L^2(\R^3)}$ satisfies
\begin{equation}\label{e:q80}
\||\xi| v(t, \cdot)\|_{L^2(\R^3)}\leq\|\p v(t, \cdot)\|_{L^2(\R^3)} \lesssim \begin{cases} t^{-\f{\mu}{2}}\left\|\nabla v_0\right\|_{L^ 2}+t^{-\f{\mu}{2}}\left\| v_1\right\|_{L^ 2}+t^{-\f{\mu}{2}}\left\|v_0+v_1\right\|_{(\f{6}{3+\mu},2)},
\\ \qquad\qquad\qquad\qquad\qquad\qquad\qquad\qquad\qquad \f{14}{5}\leq\mu<3,\\ t^{-\f{\mu}{2}}\left\|\nabla v_0\right\|_{L^ 2}+t^{-\f{\mu}{2}}\left\| v_1\right\|_{L^ 2}+t^{\f32-\f{3}{1+\varepsilon_1}}\left\|v_0+v_1\right\|_{(1+\varepsilon_1,2)},\\ \qquad\qquad\qquad\qquad\qquad\qquad\qquad\qquad\qquad \mu\geq3.
\end{cases}
\end{equation}
Next we estimate  $\|t|\xi|\p_t\hat{v}\|_{L^2(\R^3)}$. In  $D_1$,   \eqref{e:q13} and \eqref{e:q54} yield
$$
\left\|t|\xi|\p_t\hat{v}(t, \cdot)\right\|_{L^2(D_1)}
\lesssim \begin{cases}\big\|(1+t|\xi|)^{-\f{\mu}{2}}(\hat{v}_{0}+\hat{v}_{1})\big\|_{L^2}
\lesssim t^{-\f{\mu}{2}}\left\|v_0+v_1\right\|_{(\f{6}{\mu+3},2)}, &  \f{14}{5}\leq\mu<3,  \\ \big\|(1+t|\xi|)^{-\f{3}{2}}(\hat{v}_{0}+\hat{v}_{1})\big\|_{L^2}
\lesssim t^{\f32-\f{3}{1+\varepsilon_1}}\left\|v_0+v_1\right\|_{(1+\varepsilon_1,2)},
&  \mu\geq3.\end{cases}
$$
In $D_2$,  \eqref{e:q17} implies that for $\mu\geq\f{14}{5}$,
$$
\left\|t|\xi|\p_t\hat{v}(t, \cdot)\right\|_{L^2(D_2)}
 \lesssim t^{-\frac{\mu}{2}}\left\|t|\xi||\xi|\hat{v}_{0}\right\|_{L^{2}}+t^{-\f{\mu}{2}} \|t|\xi|\hat{v}_{1}\|_{L^{2}} \lesssim t^{-\f{\mu}{2}}\left\|\nabla v_0\right\|_{Z,1,2}+t^{-\f{\mu}{2}}\left\|v_1\right\|_{Z,1,2}.
$$
In $D_3$, combining \eqref{e:q21} and \eqref{e:q58'} gives
$$
\left\|t|\xi|\p_t\hat{v}(t, \cdot)\right\|_{L^2(D_3)}
\lesssim \begin{cases}t^{-1}\big\|(1+t|\xi|)^{-\f{\mu}{2}}(\hat{v}_{0}+\hat{v}_{1})\big\|_{L^2}
\lesssim t^{-\f{\mu}{2}-1}\left\|v_0+v_1\right\|_{(\f{6}{3+\mu},2)}, &  \f{14}{5}\leq\mu<3,  \\ t^{-1}\big\|(1+t|\xi|)^{-\f{3}{2}}(\hat{v}_{0}+\hat{v}_{1})\big\|_{L^2}
\lesssim t^{\f12-\f{3}{1+\varepsilon_1}}\left\|v_0+v_1\right\|_{(1+\varepsilon_1,2)},
&  \mu\geq3.\end{cases}
$$
Summing the estimates over $D_1$, $D_2$ and $D_3$ leads to
\begin{equation}\label{e:q83}
\|t|\xi|\p_t\hat{v}\|_{L^2(\R^3)}\lesssim \begin{cases}t^{-\f{\mu}{2}}\left\|\nabla v_0\right\|_{Z, 1,  2}+t^{-\f{\mu}{2}}\left\| v_1\right\|_{Z, 1,  2}+t^{-\f{\mu}{2}}\left\|v_0+v_1\right\|_{Z, 1,  (\f{6}{3+\mu},2)}, &  \f{14}{5}\leq\mu<3,  \\ t^{-\f{\mu}{2}}\left\|\nabla v_0\right\|_{Z, 1,  2}+t^{-\f{\mu}{2}}\left\| v_1\right\|_{Z, 1,  2}+t^{\f32-\f{3}{1+\varepsilon_1}}\left\|v_0+v_1\right\|_{Z, 1,  (1+\varepsilon_1,2)},
&  \mu\geq3.\end{cases}
\end{equation}

We now treat $\||\xi|\ds\sum_{k=1}^3\xi_k\p_{\xi_k}\hat{v}\|_{L^2(\R^3)}$.
As treated in \eqref{e:q34}, \eqref{e:q35} and \eqref{e:q38}, one can obtain that
\begin{equation}\label{e:q84}
\begin{aligned}
&\||\xi|\ds\sum_{k=1}^3\xi_k\p_{\xi_k}\hat{v}\|_{L^2(D_1)}
\\
&=\big\||\xi|  \ds\sum_{k=1}^{3}\left( \xi_k (\partial_{\xi_k} \Psi_0) \hat{v}_0
+ \xi_k (\partial_{\xi_k} \Psi_1) \hat{v}_1
+ \xi_k \Psi_0 \partial_{\xi_k} \hat{v}_0
+ \xi_k \Psi_1 \partial_{\xi_k} \hat{v}_1\right)\big\|_{L^2(D_1)}\\
&\lesssim \big\|(t|\xi|)^{-\frac{\mu}{2}}(\ds\sum_{k=1}^3t|\xi_k|)(\hat{v}_{0}+\hat{v}_{1})\big\|_{L^{2}}+ \big\|(t|\xi|)^{-\f{\mu}{2}}(\ds\sum_{k=1}^3\xi_k\p_{\xi_k})(\hat{v}_{0}+\hat{v}_{1})\big\|_{L^{2}}\\
& \lesssim \ds\sum_{k=1}^3 \big\|(1+t|\xi|)^{-\f{3}{2}}t|\xi_k|(\hat{v}_{0}+\hat{v}_{1})\big\|_{L^{2}}+
\big\|(1+t|\xi|)^{-\f{3}{2}}\ds\sum_{k=1}^3\widehat{(x_k\p_k)(v_0+v_1)}\big\|_{L^{2}}\\
&\quad+ \left\|(1+t|\xi|)^{-\f{3}{2}}\widehat{3(v_0+v_1)}\right\|_{L^{2}}\\
&\lesssim  t^{\f32-\f{3}{1+\varepsilon_1}}\left\|v_0+v_1\right\|_{Z, 1,(1+\varepsilon_1, 2)}
\end{aligned}
\end{equation}
and
\begin{equation}\label{e:q85}
\begin{aligned}
&\||\xi|\ds\sum_{k=1}^3\xi_k\p_{\xi_k}\hat{v}\|_{L^2(D_2)}\\
&\lesssim \big\|t^{-\frac{\mu}{2}}|\xi|\cdot|\xi|\hat{v}_{0}\big\|_{L^{2}}+ \big\|t^{-\frac{\mu}{2}}|\xi|\hat{v}_{1}\big\|_{L^{2}}+ \big\|t^{-\f{\mu}{2}}(\ds\sum_{k=1}^3\xi_k\p_{\xi_k})\hat{v}_{1}\big\|_{L^{2}} +\big\|t^{-\f{\mu}{2}}(\ds\sum_{k=1}^3\xi_k\p_{\xi_k})\hat{v}_{0}\big\|_{L^{2}}\\
& \lesssim t^{-\frac{\mu}{2}}\big\|\nabla v_{0}\big\|_{Z, 1, 2}+ t^{-\frac{\mu}{2}}\big\|v_{1}\big\|_{Z, 1, 2}+ t^{-\f{\mu}{2}}
\big\|\ds\sum_{k=1}^3\widehat{(x_k\p_k)v_1}\big\|_{L^{2}}\\
&\quad+ t^{-\f{\mu}{2}} \big\|3|\xi|\widehat{v_1}\big\|_{L^{2}}+t^{-\f{\mu}{2}} \big\|\ds\sum_{k=1}^3\widehat{(x_k\p_k)v_0}\big\|_{L^{2}}+ t^{-\f{\mu}{2}} \big\|3|\xi|\widehat{v_0}\big\|_{L^{2}}\\
&\lesssim  t^{-\f{\mu}{2}}\left\|\nabla v_0\right\|_{Z, 1, 2}+t^{-\f{\mu}{2}}\left\|v_1\right\|_{Z, 1, 2}
\end{aligned}
\end{equation}
and
\begin{equation}\label{e:q86}
\begin{aligned}
&\||\xi|\ds\sum_{k=1}^3\xi_k\p_{\xi_k}\hat{v}\|_{L^2(D_3)}\\
&\lesssim \big\||\xi|(1+t|\xi|)^{-\f32}(\hat{v}_{0}+\hat{v}_{1})\big\|_{L^{2}}+ \big\||\xi|(1+t|\xi|)^{-\f32}(\ds\sum_{k=1}^3\xi_k\p_{\xi_k})(\hat{v}_{0}+\hat{v}_{1})\big\|_{L^{2}}\\
&\lesssim \big\|(1+t|\xi|)^{-\f32}(\hat{v}_{0}+\hat{v}_{1})\big\|_{L^{2}}+ \big\|(1+t|\xi|)^{-\f32}(\ds\sum_{k=1}^3\xi_k\p_{\xi_k})(\hat{v}_{0}+\hat{v}_{1})\big\|_{L^{2}}\\
& \lesssim \big\|(1+t|\xi|)^{-\f{3}{2}}(\hat{v}_{0}+\hat{v}_{1})\big\|_{L^{2}}+
\big\|(1+t|\xi|)^{-\f{3}{2}}\ds\sum_{k=1}^3\widehat{(x_k\p_k)(v_0+v_1)}\big\|_{L^{2}}\\
&\lesssim  t^{\f32-\f{3}{1+\varepsilon_1}}\big\|v_0+v_1\big\|_{Z, 1,(1+\varepsilon_1, 2)}.
\end{aligned}
\end{equation}
While for $\f{14}{5}\leq\mu<3$, analogously treated as in \eqref{e:q64'}-\eqref{e:q65'} and \eqref{e:q38}, we arrive at
\begin{equation}\label{e:q86'}
\begin{aligned}
&\||\xi|\ds\sum_{k=1}^3\xi_k\p_{\xi_k}\hat{v}\|_{L^2(\R^3)}
\lesssim t^{-\f{\mu}{2}}\left\|\nabla v_0\right\|_{Z, 1,  2}+t^{-\f{\mu}{2}}\left\| v_1\right\|_{Z, 1,  2}
+t^{-\f{\mu}{2}}\left\|v_0+v_1\right\|_{Z, 1,  (\f{6}{3+\mu},2)}.
\end{aligned}
\end{equation}
Combining \eqref{e:q80}-\eqref{e:q86'} yields \eqref{e:q79'}-\eqref{e:q79}.
\vskip 0.2 true cm

{\bf Part 4.  The treatment  of $\|L_j(\p v)(t, \cdot)\|_{L^2(\R^3)}$ and $\|\Omega_{kj}(\p v)(t, \cdot)\|_{L^2(\R^3)}$ $(1\le k<j\le3)$}

\vskip 0.2 true cm
We now prove
\begin{equation}\label{e:q87}
\|L_j(\p v)\|_{L^2(\R^3)} \lesssim \begin{cases}t^{-\f{\mu}{2}}\left\|\nabla v_0\right\|_{Z, 1,  2}+t^{-\f{\mu}{2}}\left\| v_1\right\|_{Z, 1,  2}+t^{-\f{\mu}{2}}\left\|v_0+v_1\right\|_{Z, 1,  (\f{6}{3+\mu},2)}, &  \f{14}{5}\leq\mu<3,  \\ t^{-\f{\mu}{2}}\left\|\nabla v_0\right\|_{Z, 1,  2}+t^{-\f{\mu}{2}}\left\| v_1\right\|_{Z, 1,  2}+t^{\f32-\f{3}{1+\varepsilon_1}}\left\|v_0+v_1\right\|_{Z, 1,  (1+\varepsilon_1,2)},
&  \mu\geq3\end{cases}
\end{equation}
and
\begin{equation}\label{e:q88}
\|\Omega_{kj}(\p v)\|_{L^2(\R^3)}  \lesssim \begin{cases}t^{-\f{\mu}{2}}\left\|\nabla v_0\right\|_{Z, 1,  2}+t^{-\f{\mu}{2}}\left\| v_1\right\|_{Z, 1,  2}+t^{-\f{\mu}{2}}\left\|v_0+v_1\right\|_{Z, 1,  (\f{6}{3+\mu},2)}, &  \f{14}{5}\leq\mu<3,  \\ t^{-\f{\mu}{2}}\left\|\nabla v_0\right\|_{Z, 1,  2}+t^{-\f{\mu}{2}}\left\| v_1\right\|_{Z, 1,  2}+t^{\f32-\f{3}{1+\varepsilon_1}}\left\|v_0+v_1\right\|_{Z, 1,  (1+\varepsilon_1,2)},
&  \mu\geq3.\end{cases}
\end{equation}

To prove \eqref{e:q87} and \eqref{e:q88}, it is enough only to estimate $\|L_3(\p v)(t, \cdot)\|_{L^2(\R^3)}$ and  $\|\Omega_{13}(\p v)(t, \cdot)\|_{L^2(\R^3)} $.
Note that $|x|\lesssim t$ holds for $(t, x)\in \operatorname{supp}v$. Then
\begin{equation}\label{e:q89}
\begin{aligned}
\|L_3(\p v)(t, \cdot)\|_{L^2(\R^3)}
& \leq \|t|\xi_1||\xi|\hat{v}\|_{L^2(\R^3)}+\|t|\xi_1|\p_t\hat{v}\|_{L^2(\R^3)}+\|\f{x_1}{t}t\p_t\p v\|_{L^2(\R^3)}\\
& \lesssim \|t|\xi_1||\xi|\hat{v}\|_{L^2(\R^3)}+\|t|\xi_1|\p_t\hat{v}\|_{L^2(\R^3)}+\|t\p_t\p v\|_{L^2(\R^3)}
\end{aligned}
\end{equation}
and
\begin{equation}\label{e:q90}
\begin{aligned}
\|\Omega_{13}(\p v)(t, \cdot)\|_{L^2(\R^3)}
\lesssim \|t|\xi_3||\xi|\hat{v}\|_{L^2(\R^3)}+\|t|\xi_1||\xi|\hat{v}\|_{L^2(\R^3)}+\left\|t|\xi_3|\p_t\hat{v}\right\|_{L^2(\R^3)}
+\left\|t|\xi_1|\p_t\hat{v}\right\|_{L^2(\R^3)}.
\end{aligned}
\end{equation}
By comparing the estimates in \eqref{e:q1}, \eqref{e:q5}, \eqref{e:q10}, \eqref{e:q49} and \eqref{e:q51}
with those in  \eqref{e:q13}, \eqref{e:q17}, \eqref{e:q21}, \eqref{e:q54} and \eqref{e:q58'}, respectively,
then $\|t|\xi_j||\xi|\hat{v}\|_{L^2(\R^3)}$ can be treated
as for $\|t|\xi_j|\p_t\hat{v}\|_{L^2(\R^3)}$.

For the term $\|t\p_t\p v\|_{L^2(\R^3)}$, the following estimate has been obtained in the  proof procedure of
$\|L_0(\p v)(t, \cdot)\|_{L^2(\R^3)}$ in Part 3
\begin{equation}\label{e:q91}
\|t\p_t\p v\|_{L^2(\R^3)} \lesssim \begin{cases}t^{-\f{\mu}{2}}\left\|\nabla v_0\right\|_{Z, 1,  2}+t^{-\f{\mu}{2}}\left\| v_1\right\|_{Z, 1,  2}+t^{-\f{\mu}{2}}\left\|v_0+v_1\right\|_{Z, 1,  (\f{6}{3+\mu},2)}, &  \f{14}{5}\leq\mu<3,  \\ t^{-\f{\mu}{2}}\left\|\nabla v_0\right\|_{Z, 1,  2}+t^{-\f{\mu}{2}}\left\| v_1\right\|_{Z, 1,  2}+t^{\f32-\f{3}{1+\varepsilon_1}}\left\|v_0+v_1\right\|_{Z, 1,  (1+\varepsilon_1,2)},
&  \mu\geq3.\end{cases}
\end{equation}
Finally, we estimate the term $\|t|\xi_j|\p_t\hat{v}\|_{L^2(\R^3)}$. It follows from \eqref{e:q83} that for $\mu\geq3$,
\begin{equation}\label{e:q92}
\begin{aligned}
&\left\|t|\xi_j|\p_t\hat{v}\right\|_{L^2(\R^3)}\\&\leq\left\|t|\xi|\p_t\hat{v}\right\|_{L^2(\R^3)}\lesssim\begin{cases} t^{-\f{\mu}{2}}\left\|\nabla v_0\right\|_{Z, 1,  2}+t^{-\f{\mu}{2}}\left\| v_1\right\|_{Z, 1,  2}+t^{-\f{\mu}{2}}\left\|v_0+v_1\right\|_{Z, 1,  (\f{6}{3+\mu},2)},& \f{14}{5}\leq\mu<3\\
t^{-\f{\mu}{2}}\left\|\nabla v_0\right\|_{Z, 1,  2}+t^{-\f{\mu}{2}}\left\| v_1\right\|_{Z, 1,  2}+t^{\f32-\f{3}{1+\varepsilon_1}}\left\|v_0+v_1\right\|_{Z, 1,  (1+\varepsilon_1,2)},& \mu\geq3.
\end{cases}
\end{aligned}
\end{equation}
Therefore, \eqref{e:q87} and \eqref{e:q88} follow from \eqref{e:q91} and \eqref{e:q92} immediately.
Collecting all the parts above, we complete the proof of Lemma \ref{lem3}.
\end{proof}

\section{Time-decay estimates of solutions to  3-D inhomogeneous equation $\square w+\f{\mu}{t}\,\p_tw=F$}\label{sec4}
In this section, we establish the time-decay estimates for the 3-D linear equation
\begin{equation}\label{l1}
\left\{ \enspace
\begin{aligned}
&\square w+\f{\mu}{t}\,\p_tw=F, &&
t\geq 1,\\
&w(1,x)=0, \quad \partial_{t} w(1,x)=0, &&x\in\R^3,
\end{aligned}
\right.
\end{equation}
where $\mu\geq\f{14}{5}$. To apply  Duhamel's  principle,  we first consider the homogeneous problem starting from $\tau\ge 1$:
\begin{equation}\label{l2}
\left\{ \enspace
\begin{aligned}
&\square v+\f{\mu}{t}\,\p_tv=0, &&
t\geq \tau \geq1,\\
&v(\tau,x)=0, \quad \partial_{t} v(\tau,x)=v_1( x), &&x\in\R^3,
\end{aligned}
\right.
\end{equation}
where $v_1(x)\in C_0^{\infty}(\R^3)$.
\begin{lemma}\label{lem4}
For any $\varepsilon_1\in(0, 1)$ and $\mu\geq\f{14}{5}$,  the solution to \eqref{l2} satisfies the following decay estimates:
\begin{equation}\label{l3}
\|v\|_{Z, 1, 2} \lesssim \begin{cases}
t^{-\f{\mu}{2}}\tau^{\f{\mu}{2}}(\left\|v_1\right\|_{Z, 1,(\f65, 2)}+\left\|v_1\right\|_{Z, 1,(\f{6}{3+\mu}, 2)}),& \f{14}{5}\leq\mu<3,\\
t^{\f{3}{2}-\f{3}{1+\varepsilon_1}}\tau^\f32(\left\|v_1\right\|_{Z, 1,(\f65, 2)}+\left\|v_1\right\|_{Z, 1,(1+\varepsilon_1, 2)}),& \mu\geq3
\end{cases}
\end{equation}
and
\begin{equation}\label{l4}
\|\p v\|_{Z, 1, 2} \lesssim \begin{cases}
t^{-\f{\mu}{2}}\tau^{\f{\mu}{2}}(\left\|v_1\right\|_{Z, 1,2}+\left\|v_1\right\|_{Z, 1,(\f{6}{3+\mu}, 2)}),& \f{14}{5}\leq\mu<3,\\
t^{\f{3}{2}-\f{3}{1+\varepsilon_1}}\tau^\f32(\left\|v_1\right\|_{Z, 1,2}+\left\|v_1\right\|_{Z, 1,(1+\varepsilon_1, 2)}),& \mu\geq3,
\end{cases}.
\end{equation}
\end{lemma}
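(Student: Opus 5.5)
The natural plan is to redo the frequency-zone analysis of Lemmas \ref{lem2}, \ref{lem2-1} and \ref{lem3}, now with general starting time $\tau\ge1$, $v_0=0$, and multiplier $\Psi_1(t,\tau,\xi)$. I will use the zones $D'_1=\{\tau|\xi|\ge1\}$, $D'_2=\{\tau|\xi|\le1\le t|\xi|\}$ and $D'_3=\{t|\xi|\le1\}$ from the Notations. An alternative is to rescale: setting $\tilde v(s,y)=v(\tau s,\tau y)$, the equation $\square+\frac{\mu}{t}\partial_t$ is invariant, and $\tilde v$ solves \eqref{equ:q16} from $s=1$ with data $(0,\tau v_1(\tau y))$. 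The vector fields $\partial$ rescale as $\tau^{-1}$, while $L_0,L_j,\Omega_{kl}$ are scale invariant. I would use this scaling mainly as bookkeeping, and redo the estimates directly to avoid the unit-ball support assumption in Lemmas \ref{lem2}--\ref{lem3}.

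\emph{Pointwise multiplier bounds.} I first bound $\Psi_1$, $\partial_t\Psi_1$ and $\xi\cdot\nabla_\xi\Psi_1$ in each zone, using Lemma \ref{lem1} and \eqref{equ:q9-3}/\eqref{equ:q9-5'} with the $\tau^{1-\rho}$ factor kept.
\begin{itemize}
\item In $D'_1$, both Bessel arguments are large, giving $|\Psi_1|\lesssim t^{\rho}\tau^{1-\rho}(\tau|\xi|)^{-1/2}(t|\xi|)^{-1/2}|\xi|^{-0}\cdots=t^{-\mu/2}\tau^{\mu/2}|\xi|^{-1}$.
\item In $D'_2$, the small argument $\tau|\xi|$ is dominated by the $Y_{\rho}$ or $J_{\rho}$ term of size $(\tau|\xi|)^{\rho}$. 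This yields $|\Psi_1|\lesssim \tau(t|\xi|)^{-\mu/2}\,(\tau|\xi|)^{\ldots}$, i.e. $|\Psi_1|\lesssim \tau^{\mu/2}t^{-\mu/2}|\xi|^{-\mu/2}\tau^{1-\mu/2}\cdots\lesssim \tau (1+t|\xi|)^{-\mu/2}(\tau|\xi|)^{-1+\ldots}$, which I arrange as $\tau^{\mu/2}(1+t|\xi|)^{-\mu/2}$ times a factor at most $|\xi|^{-1}$.
\item In $D'_3$, the small-argument bounds give $|\Psi_1|\lesssim \tau$ (the analogue of \eqref{e:q10} and \eqref{e:q51}), and $|\partial_t\Psi_1|\lesssim \tau/t$.
\end{itemize}
The $\xi\cdot\nabla_\xi$ derivative costs at most a factor $t|\xi|+\tau|\xi|+1$, handled exactly as in \eqref{e:q32}--\eqref{e:q38}.

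\emph{Converting to norms of $v_1$.} Next I convert these bounds to $L^p$-type norms of $v_1$ by the duality argument of \eqref{e:q2}--\eqref{e:q3} and \eqref{e:q2'}--\eqref{e:q3'}. For $\frac{14}{5}\le\mu<3$, the factor $(1+t|\xi|)^{-\mu/2}$ with the rescaling $x=ty$ gives $t^{-\mu/2}\|v_1\|_{(\frac{6}{3+\mu},2)}$, multiplied by the $\tau^{\mu/2}$ prefactor. For $\mu\ge3$, I replace it by $(1+t|\xi|)^{-3/2}$, which gives $t^{\frac32-\frac3{1+\varepsilon_1}}\|v_1\|_{(1+\varepsilon_1,2)}$; here the $\tau$ power is bounded by $\tau^{3/2}$ after using $\tau|\xi|\le1$ to trade the excess $\tau^{\mu/2-3/2}$. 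The high-frequency zone $D'_1$ uses the $H^{-1}$ duality \eqref{e:q7}--\eqref{e:q8}, giving $\|v_1\|_{(\frac65,2)}$ for \eqref{l3}, or plain $L^2$ for \eqref{l4}. The vector fields $L_0,L_j,\Omega_{kl}$ and $\partial$ are handled as in Parts 3--5 of Lemma \ref{lem2}. The key identities are $t\partial_j=L_j-x_j\partial_t$, with $|x|\lesssim t$ on the support, and $x_k\partial_k\leftrightarrow \xi_k\partial_{\xi_k}$. Here $v_1$'s vector-field norms appear, where $L_jv_1$ is interpreted at time $\tau$ as $\tau\partial_jv_1$ (since $v(\tau)=0$).

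\emph{Main obstacle.} The main obstacle is zone $D'_2$ with $\mu\ge3$ (especially odd $\mu$, where $Y_{-\rho}$ appears). There I must verify that the growth in $\tau$ is exactly $\tau^{3/2}$ rather than $\tau^{\mu/2}$. I would first try the bound $|\Psi_1|\lesssim t^{\rho}\tau^{1-\rho}(t|\xi|)^{-1/2}(\tau|\xi|)^{\rho}$. Combining it with $\tau|\xi|\le1$ and $t|\xi|\ge1$, I would try to show that it is $\lesssim\tau^{3/2}(1+t|\xi|)^{-3/2}|\xi|^{-1}\cdot$(harmless) by interpolating the exponents. If this fails, I would fall back on the scaling argument, applying Lemmas \ref{lem2}--\ref{lem3} to $\tilde v$ and tracking the $\tau$ powers produced by the rescaled norms.
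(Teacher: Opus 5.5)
Your overall route is the paper's: the same three frequency zones, the same Bessel/Hankel bounds on $\Psi_1(t,\tau,\xi)$, and the same duality/Sobolev conversion. Your bounds $|\Psi_1|\lesssim t^{-\mu/2}\tau^{\mu/2}|\xi|^{-1}$ on $\{\tau|\xi|\ge1\}$ and $|\Psi_1|\lesssim\tau$, $|\partial_t\Psi_1|\lesssim\tau/t$ on $\{t|\xi|\le1\}$ agree with it.

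The genuine gap is the vector-field step under the reading you fix, $L_jv_1=\tau\partial_jv_1$. Following Parts 3--5 of Lemma \ref{lem2}, $\|L_0v\|_{L^2}$, $\|L_jv\|_{L^2}$ and $\|\Omega_{kl}v\|_{L^2}$ are bounded through $\|t\partial_t\hat v\|_{L^2}$, $\|t\xi_j\hat v\|_{L^2}$ and $\|\xi\cdot\nabla_\xi\hat v\|_{L^2}$. Each of these puts a factor $t|\xi|$ on $\hat v_1$. For example,
\[
\|t\xi_j\hat v\|_{L^2(\tau|\xi|\ge1)}\lesssim t^{1-\frac{\mu}{2}}\tau^{\frac{\mu}{2}}\|\hat v_1\|_{L^2(\tau|\xi|\ge1)}\lesssim t^{-\frac{\mu}{2}}\tau^{\frac{\mu}{2}}\,\frac{t}{\tau}\,\|\tau\nabla v_1\|_{(\frac65,2)},
\]
and the middle zone loses in the same way. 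So with your reading every such estimate is off by the unbounded factor $t/\tau$.

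Your identity $t\partial_j=L_j-x_j\partial_t$ turns $t\partial_jv_1$ into $L_jv_1$ only if $L_j$ carries the current time $t$. That is exactly what the paper does: it identifies $\|t\partial_jv_1\|_{(\frac65,2)}$ with $\|L_jv_1\|_{(\frac65,2)}$ in \eqref{e:q28}, and does the same in the proof of Lemma \ref{lem4} for $\|t\partial_t\hat v\|$ and $\|t|\xi|\partial_t\hat v\|$. Your reading is in fact the one matching how $\||u(\tau,\cdot)|^p\|_{Z,1,(\cdot,2)}$ is estimated in Section 5, so this is not cosmetic.

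Under your reading you need an argument the paper does not give. One option is to commute the fields through $P=\square+\frac{\mu}{t}\partial_t$, using $[P,\Omega_{kl}]=0$, $[P,L_0]=2P$, and $P(L_jv)=\frac{\mu}{t^2}L_jv$ when $Pv=0$, instead of the triangle inequality $\|L_jv\|\le\|t\partial_jv\|+\|x_j\partial_tv\|$. Your scaling fallback does not rescue this. The identity $\Psi_1(t,\tau,\xi)=\tau\Psi_1(t/\tau,1,\tau\xi)$ is correct, but Lemmas \ref{lem2}--\ref{lem3} are proved with the same current-time convention, so rescaling them reproduces the paper's reading, not yours.

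Separately, the obstacle you single out is misplaced, and the proposed mechanism is wrong. On $\{\tau|\xi|\le1\le t|\xi|\}$ your own first bound is already exact: $t^{\rho}\tau^{1-\rho}(t|\xi|)^{-\frac12}(\tau|\xi|)^{\rho}=\tau(t|\xi|)^{-\frac{\mu}{2}}$. So the growth in $\tau$ there is only linear, and $(t|\xi|)^{-\frac{\mu}{2}}\le(t|\xi|)^{-\frac32}$ settles $\mu\ge3$. The extra factor $|\xi|^{-1}$ in your ``arrangement'' must be dropped, since it is as large as $t$ in that zone. The factor $\tau^{\mu/2}$ occurs only on $\{\tau|\xi|\ge1\}$, and for $\mu\ge3$ it is absorbed using nothing but $t\ge\tau$:
\[
t^{-\frac{\mu}{2}}\tau^{\frac{\mu}{2}}\le t^{\frac32-\frac{3}{1+\varepsilon_1}}\tau^{\frac{3}{1+\varepsilon_1}-\frac32}\le t^{\frac32-\frac{3}{1+\varepsilon_1}}\tau^{\frac32}.
\]
This is exactly what the paper does. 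Trading $\tau^{\frac{\mu}{2}-\frac32}$ for a negative power of $|\xi|$ via $\tau|\xi|\le1$ is unnecessary, and it would only convert the excess into a growing power of $t$.
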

\begin{proof}
Although the proof follows the same procedure as in  Lemmas \ref{lem2}-\ref{lem3},
we still provide the main steps  for \eqref{l3} and \eqref{l4} due to  the presence of the initial time parameter $\tau$.

We first prove
$$
\|v\|_{Z, 1, 2} \lesssim \begin{cases}
t^{-\f{\mu}{2}}\tau^{\f{\mu}{2}}(\left\|v_1\right\|_{Z, 1,(\f65, 2)}+\left\|v_1\right\|_{Z, 1,(\f{6}{3+\mu}, 2)}),& \f{14}{5}\leq\mu<3,\\
t^{\f{3}{2}-\f{3}{1+\varepsilon_1}}\tau^\f32(\left\|v_1\right\|_{Z, 1,(\f65, 2)}+\left\|v_1\right\|_{Z, 1,(1+\varepsilon_1, 2)}),& \mu\geq3.
\end{cases}
$$
To this end,  we decompose the frequency space $\R^3_{\xi}$ into three regions:
\begin{equation}\label{l5}
D'_1=\{\xi: \tau|\xi| \leq 1 \leq t|\xi|\}, \quad D'_2=\{\xi: t|\xi|\geq\tau|\xi|\ge 1\}, \quad D'_3=\{\xi: \tau|\xi|\leq t|\xi| \leq 1\}.
\end{equation}

For the regions $D'_1$ and $D'_3$, by \eqref{equ:q9-3}, \eqref{equ:q9-5'}, Lemma \ref{lem1} (i) and (iii), it holds
that
\begin{equation}\label{l6}
\left|\Psi_1(t, \tau, \xi)\right|  \lesssim t^{-\frac{\mu}{2}}|\xi|^{-\frac{\mu}{2}}\tau
\quad\text{and}\quad
\begin{aligned}
\left|\Psi_1(t, \tau, \xi)\right|  \lesssim t^{2\rho}\tau^{1-2\rho}+\tau\lesssim \tau,
\end{aligned}
\end{equation}
respectively. For the region $D'_2$, combining \eqref{equ:q6} with Lemma \ref{lem1} (ii) yields
\begin{equation}\label{l8}
 \left|\Psi_1(t, \tau, \xi)\right|  \lesssim t^{-\frac{\mu}{2}}\tau^{\frac{\mu}{2}}|\xi|^{-1}.
\end{equation}
From \eqref{l6}-\eqref{l8} and \eqref{equ:q4} with $\hat{v}_0 = 0$, we deduce that for  $t\geq \tau\geq1$,
\begin{equation}\label{l9'}
\begin{aligned}
\|\hat{v}\|_{L^2(\R^3)} & \lesssim\big\|t^{-\frac{\mu}{2}} |\xi|^{-\frac{\mu}{2}}\tau \hat{v}_1(\xi)\big\|_{L^2(D'_1)}+\left\|\tau \hat{v}_1(\xi)\right\|_{L^2(D'_3)} +\big\|t^{-\frac{\mu}{2}}\tau^{\frac{\mu}{2}} |\xi|^{-1} \hat{v}_1(\xi)\big\|_{L^2(D'_2)} \\
& \lesssim \tau\left\|(1+t|\xi|)^{-\f{\mu}{2}} \hat{v}_1\right\|_{L^2}+\tau\left\|(1+t|\xi|)^{-\f{\mu}{2}} \hat{v}_1\right\|_{L^2}+t^{-\frac{\mu}{2}} \tau^{\frac{\mu}{2}} \cdot \tau\left\|(1+\tau|\xi|)^{-1} \hat{v}_1\right\|_{L^2} \\
& \lesssim \tau t^{-\f{\mu}{2}}\left\|v_1\right\|_{\left(1+\varepsilon_1, 2\right)}+t^{-\f{\mu}{2}} \tau^{\f{\mu}{2}}\left\|v_1\right\|_{\left(\f65, 2\right)} \\
& \lesssim  t^{-\f{\mu}{2}} \tau^{\f{\mu}{2}}(\left\|v_1\right\|_{\left(1+\varepsilon_1, 2\right)}+\left\|v_1\right\|_{\left(\f65, 2\right)})\quad\text{for}\,\f{14}{5}\leq\mu<3
\end{aligned}
\end{equation}
and
\begin{equation}\label{l9}
\begin{aligned}
\|\hat{v}\|_{L^2(\R^3)} & \lesssim\big\|t^{-\frac{\mu}{2}} |\xi|^{-\frac{\mu}{2}}\tau \hat{v}_1(\xi)\big\|_{L^2(D'_1)}+\left\|\tau \hat{v}_1(\xi)\right\|_{L^2(D'_3)} +\big\|t^{-\frac{\mu}{2}}\tau^{\frac{\mu}{2}} |\xi|^{-1} \hat{v}_1(\xi)\big\|_{L^2(D'_2)} \\
& \lesssim \tau\left\|(1+t|\xi|)^{-\f32} \hat{v}_1\right\|_{L^2}+\tau\left\|(1+t|\xi|)^{-\f32} \hat{v}_1\right\|_{L^2}+t^{-\frac{\mu}{2}} \tau^{\frac{\mu}{2}} \cdot \tau\left\|(1+\tau|\xi|)^{-1} \hat{v}_1\right\|_{L^2} \\
& \lesssim \tau t^{\f32-\frac{3}{1+\varepsilon_1}}\left\|v_1\right\|_{\left(1+\varepsilon_1, 2\right)}+t^{-\f{\mu}{2}} \tau^{\f{\mu}{2}}\left\|v_1\right\|_{\left(\f65, 2\right)} \\
& \lesssim t^{\f32-\frac{3}{1+\varepsilon_1}}t^{-\f{\mu}{2}-\f32+\frac{3}{1+\varepsilon_1}} \tau^{\f{\mu}{2}}\left\|v_1\right\|_{\left(\f65, 2\right)}+ t^{\f32-\frac{3}{1+\varepsilon_1}} \tau\left\|v_1\right\|_{\left(1+\varepsilon_1, 2\right)}\\
& \lesssim t^{\f32-\frac{3}{1+\varepsilon_1}} \tau^{\f32}(\left\|v_1\right\|_{\left(\f65, 2\right)}+ \left\|v_1\right\|_{\left(1+\varepsilon_1, 2\right)})\quad\text{for}\,\mu\geq3.
\end{aligned}
\end{equation}
Similarly,
\begin{equation}\label{l10'}
\begin{aligned}
\|t|\xi_j|\hat{v}\|_{L^2} & \lesssim\big\|t^{-\frac{\mu}{2}} |\xi|^{-\frac{\mu}{2}}\tau t|\xi_j| \hat{v}_1(\xi)\big\|_{L^2(D'_1)}+\left\|\tau t|\xi_j| \hat{v}_1(\xi)\right\|_{L^2(D'_3)} +\big\|t^{-\frac{\mu}{2}}\tau^{\frac{\mu}{2}} |\xi|^{-1}t|\xi_j| \hat{v}_1(\xi)\big\|_{L^2(D'_2)} \\
& \lesssim t^{-\f{\mu}{2}} \tau\left\|v_1\right\|_{Z, 1, \left(\f{6}{3+\mu}, 2\right)}+t^{-\f{\mu}{2}} \tau^{\f{\mu}{2}}\left\|v_1\right\|_{Z, 1, \left(\f65, 2\right)} \\
& \lesssim t^{-\f{\mu}{2}} \tau^{\f{\mu}{2}}(\left\|v_1\right\|_{Z, 1, \left(\f65, 2\right)}+\left\|v_1\right\|_{Z, 1, \left(\f{6}{3+\mu}, 2\right)})\quad\text{for}\,\f{14}{5}\leq\mu<3
\end{aligned}
\end{equation}
and for $\mu\geq3$,
\begin{equation}\label{l10}
\begin{aligned}
\|t|\xi_j|\hat{v}\|_{L^2} & \lesssim\big\|t^{-\frac{\mu}{2}} |\xi|^{-\frac{\mu}{2}}\tau t|\xi_j| \hat{v}_1(\xi)\big\|_{L^2(D'_1)}+\left\|\tau t|\xi_j| \hat{v}_1(\xi)\right\|_{L^2(D'_3)} +\big\|t^{-\frac{\mu}{2}}\tau^{\frac{\mu}{2}} |\xi|^{-1}t|\xi_j| \hat{v}_1(\xi)\big\|_{L^2(D'_2)} \\
& \lesssim t^{\f32-\frac{3}{1+\varepsilon_1}} \tau\left\|v_1\right\|_{Z, 1, \left(1+\varepsilon_1, 2\right)}+\tau t^{\f32-\frac{3}{1+\varepsilon_1}}\left\|v_1\right\|_{Z, 1, \left(1+\varepsilon_1, 2\right)}+t^{-\f{\mu}{2}} \tau^{\f{\mu}{2}}\left\|v_1\right\|_{Z, 1, \left(\f65, 2\right)} \\
&  \lesssim t^{\f32-\frac{3}{1+\varepsilon_1}} \tau^{-\f32+\frac{3}{1+\varepsilon_1}}\left\|v_1\right\|_{Z 1, \left(\f65, 2\right)}+ t^{\f32-\frac{3}{1+\varepsilon_1}} \tau\left\|v_1\right\|_{Z, 1, \left(1+\varepsilon_1, 2\right)}\\
& \lesssim t^{\f32-\frac{3}{1+\varepsilon_1}} \tau^{\f32}(\left\|v_1\right\|_{Z, 1, \left(\f65, 2\right)}+\left\|v_1\right\|_{Z, 1, \left(1+\varepsilon_1, 2\right)}).
\end{aligned}
\end{equation}
Hence we arrive at
\begin{equation}\label{l11}
\||\xi_j|\hat{v}\|_{L^2(\R^3)}=t^{-1}\|t|\xi_j|\p_t\hat{v}\|_{L^2}
 \lesssim\begin{cases}
 t^{-\f{\mu}{2}-1} \tau^{\f{\mu}{2}}(\left\|v_1\right\|_{Z, 1, \left(\f65, 2\right)}+\left\|v_1\right\|_{Z, 1, \left(\f{6}{3+\mu}, 2\right)}),&\f{14}{5}\leq\mu<3,\\
 t^{\f12-\frac{3}{1+\varepsilon_1}} \tau^{\f32}(\left\|v_1\right\|_{Z, 1, \left(\f65, 2\right)}+\left\|v_1\right\|_{Z, 1, \left(1+\varepsilon_1, 2\right)}),&\mu\geq3.
 \end{cases}
\end{equation}
In addition, for the regions $D'_1$ and $D'_3$,  combining \eqref{equ:q9-4}, \eqref{l-3} with Lemma \ref{lem1} (i) and (iii), one can
obtain that for $\mu\geq\f{14}{5}$,
\begin{equation}\label{l12}
\left|\p_t\Psi_1(t, \tau, \xi)\right|  \lesssim t^{-\frac{\mu}{2}}|\xi|^{\frac{\mu}{2}}\tau^{\mu}
\end{equation}
and
\begin{equation}\label{l13}
\begin{aligned}
\left|\p_t\Psi_1(t, \tau, \xi)\right|  \lesssim t^{-1}\tau.
\end{aligned}
\end{equation}
For the region $D'_2$,  we have from \eqref{equ:q9} and Lemma \ref{lem1} (ii) that
\begin{equation}\label{l14}
 \left|\p_t\Psi_1(t, \tau, \xi)\right|  \lesssim t^{-\frac{\mu}{2}}\tau^{\frac{\mu}{2}}.
\end{equation}
From \eqref{l12}--\eqref{l14}, it follows that for $\f{14}{5}\leq\mu<3$,
\begin{equation}\label{l15'}
\begin{aligned}
\|t\p_t\hat{v}\|_{L^2(\R^3)} & \lesssim\big\|t^{1-\frac{\mu}{2}} |\xi|^{\frac{\mu}{2}}\tau^{\mu} \hat{v}_1(\xi)\big\|_{L^2(D'_1)}+\left\| \tau \hat{v}_1(\xi)\right\|_{L^2(D'_3)} +\big\|t^{-\frac{\mu}{2}}\tau^{\frac{\mu}{2}} t \hat{v}_1(\xi)\big\|_{L^2(D'_2)} \\
& \lesssim t^{-\frac{\mu}{2}}\tau^{\frac{\mu}{2}}\big\| (1+|\xi|^2)^{-\f12}t|\xi|  \hat{v}_1(\xi)\big\|_{L^2}+t^{-\frac{\mu}{2}}\tau^{\frac{\mu}{2}}\cdot\tau\big\| (1+\tau|\xi|)^{-1}t|\xi|  \hat{v}_1(\xi)\big\|_{L^2}\\ &\quad +\tau\left\| (1+t|\xi|)^{-\f{\mu}{2}} \hat{v}_1(\xi)\right\|_{L^2}\\
& \lesssim t^{-\f{\mu}{2}} \tau^{\f{\mu}{2}} \left\|t\p_1v_1\right\|_{\left(\f65, 2\right)}+t^{-\f{\mu}{2}} \tau^{\f{\mu}{2}} \left\|t\p_2v_1\right\|_{\left(\f65, 2\right)}+t^{-\f{\mu}{2}} \tau^{\f{\mu}{2}} \left\|t\p_3v_1\right\|_{\left(\f65, 2\right)}\\
&\quad+\tau t^{-\f{\mu}{2}}\left\|v_1\right\|_{\left(\f{6}{3+\mu}, 2\right)} \\
& \lesssim t^{-\f{\mu}{2}} \tau^{\f{\mu}{2}}(\left\|v_1\right\|_{Z, 1, \left(\f65, 2\right)}+\left\|v_1\right\|_{Z, 1, \left(\f{6}{3+\mu}, 2\right)}),
\end{aligned}
\end{equation}
and for $\mu\geq3$,
\begin{equation*}\label{l15}
\begin{aligned}
\|t\p_t\hat{v}\|_{L^2(\R^3)} & \lesssim\big\|t^{1-\frac{\mu}{2}} |\xi|^{\frac{\mu}{2}}\tau^{\mu} \hat{v}_1(\xi)\big\|_{L^2(D'_1)}+\left\| \tau \hat{v}_1(\xi)\right\|_{L^2(D'_3)} +\big\|t^{-\frac{\mu}{2}}\tau^{\frac{\mu}{2}} t \hat{v}_1(\xi)\big\|_{L^2(D'_2)} \\
& \lesssim t^{-\frac{\mu}{2}}\tau^{\frac{\mu}{2}}\big\| (1+|\xi|^2)^{-\f12}t|\xi|  \hat{v}_1(\xi)\big\|_{L^2}+\tau\left\| (1+t|\xi|)^{-\f32} \hat{v}_1(\xi)\right\|_{L^2}\\ &\quad+t^{-\frac{\mu}{2}}\tau^{\frac{\mu}{2}}\cdot\tau\big\| (1+\tau|\xi|)^{-1}t|\xi|  \hat{v}_1(\xi)\big\|_{L^2} \\
& \lesssim t^{-\f{\mu}{2}} \tau^{\f{\mu}{2}} \left\|t\p_1v_1\right\|_{\left(\f65, 2\right)}+t^{-\f{\mu}{2}} \tau^{\f{\mu}{2}} \left\|t\p_2v_1\right\|_{\left(\f65, 2\right)}+t^{-\f{\mu}{2}} \tau^{\f{\mu}{2}} \left\|t\p_3v_1\right\|_{\left(\f65, 2\right)}\\
&\quad+\tau t^{\f32-\frac{3}{1+\varepsilon_1}}\left\|v_1\right\|_{\left(1+\varepsilon_1, 2\right)} \\
\end{aligned}
\end{equation*}

\begin{equation}\label{l15}
\begin{aligned}
& \lesssim t^{\f32-\frac{3}{1+\varepsilon_1}} \tau^{-\f32+\frac{3}{1+\varepsilon_1}}\left\|v_1\right\|_{Z, 1, \left(\f65, 2\right)}+ t^{\f32-\frac{3}{1+\varepsilon_1}} \tau\left\|v_1\right\|_{\left(1+\varepsilon_1, 2\right)}\\
& \lesssim t^{\f32-\frac{3}{1+\varepsilon_1}} \tau^{\f32}(\left\|v_1\right\|_{Z, 1, \left(\f65, 2\right)}+\left\|v_1\right\|_{Z, 1, \left(1+\varepsilon_1, 2\right)}).
\end{aligned}
\end{equation}
Then
\begin{equation}\label{l16}
\|\p_t\hat{v}\|_{L^2(\R^3)}=t^{-1}\|t\p_t\hat{v}\|_{L^2(\R^3)}
 \lesssim\begin{cases}
 t^{-\f{\mu}{2}-1} \tau^{\f{\mu}{2}}(\left\|v_1\right\|_{Z, 1, \left(\f65, 2\right)}+\left\|v_1\right\|_{Z, 1, \left(\f{6}{3+\mu}, 2\right)}),&\f{14}{5}\leq\mu<3,\\
 t^{\f12-\frac{3}{1+\varepsilon_1}} \tau^{\f32}(\left\|v_1\right\|_{Z, 1, \left(\f65, 2\right)}+\left\|v_1\right\|_{Z, 1, \left(1+\varepsilon_1, 2\right)}),&\mu\geq3.
 \end{cases}
\end{equation}
Analogously to the proof of \eqref{e:q33}, \eqref{e:q37} and \eqref{e:q63}, we have that
for $j=1,2,3$ and $\tau|\xi|\leq 1$,
\begin{equation}\label{l17}
\begin{aligned}
|\partial_{\xi_j}& \Psi_1(t, \tau, \xi)|
 \lesssim  \frac{|\xi_j|}{|\xi|} t^\rho\tau^{2-\rho} \left| J_{1-\rho}(\tau|\xi|) Y_{-\rho}(t|\xi|) \right|
+ \frac{|\xi_j|}{|\xi|^2} t^\rho\tau^{1-\rho} \left| J_{-\rho}(\tau|\xi|) Y_{-\rho}(t|\xi|) \right| \\
& \quad + t^{\rho+1}\tau^{1-\rho}\f{|\xi_j|}{|\xi|} \left| Y_{1-\rho}(t|\xi|)J_{-\rho}(\tau|\xi|) \right|
+ t^\rho\tau^{1-\rho} \frac{|\xi_j|}{|\xi|^2} \left| Y_{-\rho}(t|\xi|)J_{-\rho}(\tau|\xi|) \right|\\
& \quad + t^{\rho+1}\tau^{1-\rho}\f{|\xi_j|}{|\xi|} \left| J_{1-\rho}(t|\xi|) Y_{-\rho}(\tau|\xi|) \right|
+ t^\rho\tau^{1-\rho} \frac{|\xi_j|}{|\xi|^2} \left| J_{-\rho}(t|\xi|) Y_{-\rho}(\tau|\xi|) \right| \\
& \quad + t^\rho\tau^{2-\rho} \f{|\xi_j|}{|\xi|} \left| Y_{1-\rho}(\tau|\xi|) J_{-\rho}(t|\xi|) \right|
+ t^\rho\tau^{1-\rho} \frac{|\xi_j|}{|\xi|^2} \left| Y_{-\rho}(\tau|\xi|) J_{-\rho}(t|\xi|) \right|, \\&\qquad\qquad\qquad\qquad\qquad\qquad\qquad\qquad\qquad\qquad\qquad\qquad\qquad\text{if $\rho$ is an integer},
\end{aligned}
\end{equation}
\begin{equation}\label{l18}
\begin{aligned}
|\partial_{\xi_j}&\Psi_1(t, \tau, \xi)|
\lesssim t^\rho \tau^{2-\rho}\frac{\xi_j}{|\xi|} |J_{-\rho-1}(\tau|\xi|) J_\rho(t|\xi|)|+t^{\rho+1} \tau^{1-\rho}\frac{\xi_j}{|\xi|} |J_{\rho-1}(t|\xi|) J_{-\rho}(\tau|\xi|)| \\
&\quad -t^\rho \tau^{2-\rho}\frac{\xi_j}{|\xi|} |J_{\rho-1}(\tau|\xi|) J_{-\rho}(t|\xi|)|
-t^{\rho+1} \tau^{1-\rho}\frac{\xi_j}{|\xi|} |J_{-\rho-1}(t|\xi|) J_\rho(\tau|\xi|)|\\
& \quad+t^\rho \tau^{1-\rho}\frac{\xi_j}{|\xi|^2} |J_{-\rho}(\tau|\xi|) J_\rho(t|\xi|)| +t^\rho \tau^{1-\rho}\frac{\xi_j\rho}{|\xi|^2} |J_\rho(t|\xi|) J_{-\rho}(\tau|\xi|)|\\
&\quad +t^\rho \tau^{1-\rho}\frac{\xi_j\rho}{|\xi|^2} |J_\rho(\tau|\xi|) J_{-\rho}(t|\xi|)|+t^\rho \tau^{1-\rho}\frac{\xi_j\rho}{|\xi|^2} |J_{-\rho}(t|\xi|) J_\rho(\tau|\xi|)|,\\&\qquad\qquad\qquad\qquad\qquad\qquad\qquad\qquad\qquad\qquad\qquad\qquad\quad\text{if $\rho$ is not an integer.}
\end{aligned}
\end{equation}
Meanwhile, for $\tau|\xi|\geq 1$,
\begin{equation}\label{l19}
\begin{aligned}
|\partial_{\xi_j}&\Psi_1(t, \tau, \xi)|
\lesssim  t^{\rho+1}\tau^{1-\rho} \f{|\xi_j|}{|\xi|} \left| H_{\rho-1}^+(t|\xi|) H_{\rho}^-(\tau|\xi|) \right|
+ t^\rho\tau^{1-\rho} \frac{|\xi_j|}{|\xi|^2} \left| H_\rho^+(t|\xi|) H_{\rho}^-(\tau|\xi|) \right| \\
&\quad + t^\rho\tau^{2-\rho} \f{|\xi_j|}{|\xi|} \left| H_{\rho-1}^-(\tau|\xi|) H_\rho^+(t|\xi|) \right|
+ t^\rho\tau^{1-\rho} \frac{|\xi_j|}{|\xi|^2} \left| H_{\rho}^-(\tau|\xi|) H_\rho^+(t|\xi|) \right| \\
&\quad + t^\rho\tau^{2-\rho} \f{|\xi_j|}{|\xi|} \left| H_{\rho-1}^+(\tau|\xi|) H_\rho^-(t|\xi|) \right|
+ t^\rho\tau^{1-\rho} \frac{|\xi_j|}{|\xi|^2} \left| H_{\rho}^+(\tau|\xi|) H_\rho^-(t|\xi|) \right| \\
&\quad + t^{\rho+1}\tau^{1-\rho} \f{|\xi_j|}{|\xi|} \left| H_{\rho-1}^-(t|\xi|) H_{\rho}^+(\tau|\xi|) \right|
+ t^\rho\tau^{1-\rho} \frac{|\xi_j|}{|\xi|^2} \left| H_\rho^-(t|\xi|) H_{\rho}^+(\tau|\xi|) \right|.
\end{aligned}
\end{equation}
Proceeding as in \eqref{e:q38} with $\hat{v}_0=0$, we deduce from \eqref{equ:q6}, \eqref{l19} and Lemma \ref{lem1}(ii) that for $\f{14}{5}\leq\mu<3$,
\begin{equation}\label{l20'}
\begin{aligned}
&\big\|\xi_1\p_{\xi_1}\hat{v}+\xi_2\p_{\xi_2}\hat{v}+\xi_3\p_{\xi_3}\hat{v}\big\|_{L^2(D'_2)}
\\
&\lesssim \big\|t^{1-\frac{\mu}{2}}\tau^{\f{\mu}{2}}\hat{v}_{1}\big\|_{L^{2}(\tau|\xi|\geq1)}+ \big\|t^{-\f{\mu}{2}}\tau^{\f{\mu}{2}}|\xi|^{-1}(\ds\sum_{k=1}^3\xi_k\p_{\xi_k})\hat{v}_{1}\big\|_{L^{2}(\tau|\xi|\geq1)}\\
& \lesssim t^{-\frac{\mu}{2}}\tau^{\frac{\mu}{2}}\cdot\tau\big\| (1+\tau|\xi|)^{-1}t|\xi|  \hat{v}_1(\xi)\big\|_{L^2}+t^{-\frac{\mu}{2}}\tau^{\frac{\mu}{2}}\cdot\tau\big\| (1+\tau|\xi|)^{-1}(\ds\sum_{k=1}^3\xi_k\p_{\xi_k})  \hat{v}_1(\xi)\big\|_{L^2}\\
&\lesssim  t^{-\f{\mu}{2}}\tau^{\f{\mu}{2}}\left\|v_1\right\|_{Z, 1,(\f65, 2)},
\end{aligned}
\end{equation}
and for $\mu\geq3$,
\begin{equation}\label{l20}
\begin{aligned}
&\big\|\xi_1\p_{\xi_1}\hat{v}+\xi_2\p_{\xi_2}\hat{v}+\xi_3\p_{\xi_3}\hat{v}\big\|_{L^2(D'_2)}
\\
&\lesssim \big\|t^{1-\frac{\mu}{2}}\tau^{\f{\mu}{2}}\hat{v}_{1}\big\|_{L^{2}(\tau|\xi|\geq1)}+ \big\|t^{-\f{\mu}{2}}\tau^{\f{\mu}{2}}|\xi|^{-1}(\sum_{k=1}^3\xi_k\p_{\xi_k})\hat{v}_{1}\big\|_{L^{2}(\tau|\xi|\geq1)}\\
&\lesssim  t^{-\f{\mu}{2}}\tau^{\f{\mu}{2}}\left\|v_1\right\|_{Z, 1,(\f65, 2)}\\
&\lesssim t^{\f32-\frac{3}{1+\varepsilon_1}}t^{-\f{\mu}{2}-\f32+\frac{3}{1+\varepsilon_1}} \tau^{\f{\mu}{2}}\left\|v_1\right\|_{Z, 1,(\f65, 2)}\\
&\lesssim  t^{\f32-\frac{3}{1+\varepsilon_1}} \tau^{\f32}\left\|v_1\right\|_{Z, 1,(\f65, 2)}.
\end{aligned}
\end{equation}
Similarly, from \eqref{l17}-\eqref{l18}, \eqref{equ:q9-3}, \eqref{equ:q9-5'}, Lemma \ref{lem1} (i) and (iii), it follows that
\begin{equation}\label{l21}
\big\|\xi_1\p_{\xi_1}\hat{v}+\xi_2\p_{\xi_2}\hat{v}+\xi_3\p_{\xi_3}\hat{v}\big\|_{L^2(D'_1\bigcup D'_3)}
\lesssim \begin{cases}
t^{-\f{\mu}{2}}\tau^{\f{\mu}{2}}\left\|v_1\right\|_{Z, 1,(\f{6}{3+\mu}, 2)},&\f{14}{5}\leq\mu<3,\\
t^{\f{3}{2}-\f{3}{1+\varepsilon_1}}\tau^\f32\left\|v_1\right\|_{Z, 1,(1+\varepsilon_1, 2)},&\mu\geq3.
\end{cases}
\end{equation}
Then \eqref{l9'}-\eqref{l9}, \eqref{l15'}-\eqref{l15} and \eqref{l20'}-\eqref{l21} yield that for $\f{14}{5}\leq\mu<3$,
$$
\begin{aligned}
\|\widehat{L_0v}\|_{L^2}&\lesssim \|\hat{v}\|_{L^2(\R^3)}+\|t\p_t\hat{v}\|_{L^2(\R^3)}+\big\|\sum_{k=1}^3\xi_k\p_{\xi_k}\hat{v}\big\|_{L^2(\R^3)}\\
&\lesssim t^{-\f{\mu}{2}}\tau^{\f{\mu}{2}}(\left\|v_1\right\|_{Z, 1,(\f65, 2)}+\left\|v_1\right\|_{Z, 1,(\f{6}{3+\mu}, 2)}),
\end{aligned}
$$
and for $\mu\geq3$,
$$
\begin{aligned}
\|\widehat{L_0v}\|_{L^2}
&\lesssim t^{\f{3}{2}-\f{3}{1+\varepsilon_1}}\tau^\f32(\left\|v_1\right\|_{Z, 1,(\f65, 2)}+\left\|v_1\right\|_{Z, 1,(1+\varepsilon_1, 2)}).
\end{aligned}
$$
Meanwhile, it follows from \eqref{l11} and \eqref{l16} that
$$
\|\widehat{\p v}\|_{L^2}\lesssim \sum_{k=1}^3\||\xi_k|\hat{v}\|_{L^2}+\|\p_t\hat{v}\|_{L^2}\lesssim\begin{cases} t^{-\f{\mu}{2}-1}\tau^{\f{\mu}{2}}(\left\|v_1\right\|_{Z, 1,(\f65, 2)}+\left\|v_1\right\|_{Z, 1,(\f{6}{3+\mu}, 2)}),&\f{14}{5}\leq\mu<3,\\
t^{\f{1}{2}-\f{3}{1+\varepsilon_1}}\tau^\f32(\left\|v_1\right\|_{Z, 1,(\f65, 2)}+\left\|v_1\right\|_{Z, 1,(1+\varepsilon_1, 2)}),&\mu\geq3.
\end{cases}
$$
In addition, as in \eqref{e:q40} and \eqref{e:q46''}, from \eqref{l10'}-\eqref{l10} and \eqref{l15'}-\eqref{l15}, we deduce that
$$
\begin{aligned}
\sum_{j=1}^3\|\widehat{L_jv}\|_{L^2(\R^3)}&\lesssim \sum_{j=1}^3\|t|\xi_j|\hat{v}\|_{L^2(\R^3)}+\|t\p_t\hat{v}\|_{L^2(\R^3)}\\
&\lesssim \begin{cases} t^{-\f{\mu}{2}}\tau^{\f{\mu}{2}}(\left\|v_1\right\|_{Z, 1,(\f65, 2)}+\left\|v_1\right\|_{Z, 1,(\f{6}{3+\mu}, 2)}),&\f{14}{5}\leq\mu<3,\\
t^{\f{3}{2}-\f{3}{1+\varepsilon_1}}\tau^\f32(\left\|v_1\right\|_{Z, 1,(\f65, 2)}+\left\|v_1\right\|_{Z, 1,(1+\varepsilon_1, 2)}),&\mu\geq3
\end{cases}
\end{aligned}
$$
and
$$
\begin{aligned}
\sum_{1\leq k<j\leq3}\|\widehat{\Omega_{kj}v}\|_{L^2(\R^3)}&\lesssim \sum_{j=1}^3\left\|t|\xi_j|\hat{v}\right\|_{L^2(\R^3)}\\
&\lesssim \begin{cases} t^{-\f{\mu}{2}}\tau^{\f{\mu}{2}}(\left\|v_1\right\|_{Z, 1,(\f65, 2)}+\left\|v_1\right\|_{Z, 1,(\f{6}{3+\mu}, 2)}),&\f{14}{5}\leq\mu<3,\\
t^{\f{3}{2}-\f{3}{1+\varepsilon_1}}\tau^\f32(\left\|v_1\right\|_{Z, 1,(\f65, 2)}+\left\|v_1\right\|_{Z, 1,(1+\varepsilon_1, 2)}),&\mu\geq3.
\end{cases}
\end{aligned}
$$
Collecting the above estimates gives \eqref{l3}.

Next we turn to  $\|\p v\|_{Z, 1, 2}$. As in the proof
of \eqref{e:q71}, it remains to consider $\|t|\xi|\p_t\hat{v}\|_{L^2}$, $\||\xi_j||\xi|\hat{v}\|_{L^2}(j=1, 2, 3)$ and $\||\xi|\ds\sum_{k=1}^3\xi_k\p_{\xi_k}\hat{v}\|_{L^2}$.

By \eqref{l12}-\eqref{l14}, it follows that for $\f{14}{5}\leq\mu<3$,
\begin{equation}\label{l22'}
\begin{aligned}
\|t|\xi|\p_t\hat{v}\|_{L^2(\R^3)} & \lesssim\big\|t^{1-\frac{\mu}{2}} |\xi|^{\frac{\mu}{2}}\tau^{\mu}|\xi| \hat{v}_1(\xi)\big\|_{L^2(D'_1)}+\left\| \tau|\xi| \hat{v}_1(\xi)\right\|_{L^2(D'_3)} +\big\|t^{-\frac{\mu}{2}}\tau^{\frac{\mu}{2}} t|\xi| \hat{v}_1(\xi)\big\|_{L^2(D'_2)} \\
& \lesssim t^{-\frac{\mu}{2}}\tau^{\frac{\mu}{2}}\big\| t|\xi|  \hat{v}_1(\xi)\big\|_{L^2}+\tau t^{-1}\big\| (1+t|\xi|)^{-\f{\mu}{2}} \hat{v}_1(\xi)\big\|_{L^2}\\
& \lesssim t^{-\frac{\mu}{2}}\tau^{\frac{\mu}{2}}\left\|v_1\right\|_{Z, 1, 2}+ t^{-\f{\mu}{2}-1} \tau \left\|v_1\right\|_{(\f{6}{3+\mu}, 2)}\\
& \lesssim t^{-\f{\mu}{2}}\tau^{\f{\mu}{2}}(\left\|v_1\right\|_{Z, 1,2}+\left\|v_1\right\|_{Z, 1,(\f{6}{3+\mu}, 2)}),
\end{aligned}
\end{equation}
and for $\mu\geq3$,
\begin{equation}\label{l22}
\begin{aligned}
\|t|\xi|\p_t\hat{v}\|_{L^2(\R^3)} & \lesssim\big\|t^{1-\frac{\mu}{2}} |\xi|^{\frac{\mu}{2}}\tau^{\mu}|\xi| \hat{v}_1(\xi)\big\|_{L^2(D'_1)}+\left\| \tau|\xi| \hat{v}_1(\xi)\right\|_{L^2(D'_3)} +\big\|t^{-\frac{\mu}{2}}\tau^{\frac{\mu}{2}} t|\xi| \hat{v}_1(\xi)\big\|_{L^2(D'_2)} \\
& \lesssim t^{-\frac{\mu}{2}}\tau^{\frac{\mu}{2}}\big\| t|\xi|  \hat{v}_1(\xi)\big\|_{L^2}+\tau t^{-1}\big\| (1+t|\xi|)^{-\f32} \hat{v}_1(\xi)\big\|_{L^2}\\
& \lesssim t^{\f32-\frac{3}{1+\varepsilon_1}} \tau^{-\f32+\frac{3}{1+\varepsilon_1}}\left\|v_1\right\|_{Z, 1, 2}+ t^{\f12-\frac{3}{1+\varepsilon_1}} \tau\left\|v_1\right\|_{(1+\varepsilon_1, 2)}\\
& \lesssim t^{\f32-\frac{3}{1+\varepsilon_1}} \tau^{\f32}(\left\|v_1\right\|_{Z, 1, 2}+\left\|v_1\right\|_{Z, 1, \left(1+\varepsilon_1, 2\right)}).
\end{aligned}
\end{equation}
It follows from \eqref{l6}-\eqref{l8} that for $j=1, 2, 3$ and $\f{14}{5}\leq\mu<3$,
\begin{equation}\label{l23'}
\begin{aligned}
\||\xi||\xi_j|\hat{v}\|_{L^2(\R^3)}& \lesssim\big\|t^{-\frac{\mu}{2}} |\xi|^{-\frac{\mu}{2}}\tau|\xi||\xi_j| \hat{v}_1(\xi)\big\|_{L^2(D'_1)}+\left\|\tau|\xi||\xi_j| \hat{v}_1(\xi)\right\|_{L^2(D'_3)}\\
&\quad +\big\|t^{-\frac{\mu}{2}}\tau^{\frac{\mu}{2}} |\xi|^{-1}|\xi||\xi_j| \hat{v}_1(\xi)\big\|_{L^2(D'_2)} \\
& \lesssim \left\|(1+t|\xi|)^{-\f{\mu}{2}}|\xi_j| \hat{v}_1\right\|_{L^2}+t^{-1}\left\|(1+t|\xi|)^{-\f{\mu}{2}} \hat{v}_1\right\|_{L^2}\\
&\quad +t^{-\frac{\mu}{2}} \tau^{\frac{\mu}{2}} \left\||\xi_j|\hat{v}_1\right\|_{L^2} \\
& \lesssim t^{-\f{\mu}{2}} \left\|v_1\right\|_{Z, 1, (\f{6}{3+\mu})}+ t^{-\f{\mu}{2}-1}\left\|v_1\right\|_{(\f{6}{3+\mu}, 2)}+t^{-\f{\mu}{2}} \tau^{\f{\mu}{2}}\left\|v_1\right\|_{Z, 1, 2} \\
& \lesssim t^{-\f{\mu}{2}}\tau^{\f{\mu}{2}}(\left\|v_1\right\|_{Z, 1,2}+\left\|v_1\right\|_{Z, 1,(\f{6}{3+\mu}, 2)}),
\end{aligned}
\end{equation}
and for $\mu\geq3$,
\begin{equation*}\label{l23}
\begin{aligned}
\||\xi||\xi_j|\hat{v}\|_{L^2(\R^3)}& \lesssim\big\|t^{-\frac{\mu}{2}} |\xi|^{-\frac{\mu}{2}}\tau|\xi||\xi_j| \hat{v}_1(\xi)\big\|_{L^2(D'_1)}+\left\|\tau|\xi||\xi_j| \hat{v}_1(\xi)\right\|_{L^2(D'_3)}\\
&\quad +\big\|t^{-\frac{\mu}{2}}\tau^{\frac{\mu}{2}} |\xi|^{-1}|\xi||\xi_j| \hat{v}_1(\xi)\big\|_{L^2(D'_2)} \\
\end{aligned}
\end{equation*}

\begin{equation}\label{l23}
\begin{aligned}
& \lesssim \big\|(1+t|\xi|)^{-\f32}|\xi_j| \hat{v}_1\big\|_{L^2}+t^{-1}\big\|(1+t|\xi|)^{-\f32} \hat{v}_1\big\|_{L^2}\\
&\quad +t^{-\frac{\mu}{2}} \tau^{\frac{\mu}{2}} \left\||\xi_j|\hat{v}_1\right\|_{L^2} \\
& \lesssim t^{\f32-\frac{3}{1+\varepsilon_1}} \left\|v_1\right\|_{Z, 1, 2}+ t^{\f12-\frac{3}{1+\varepsilon_1}}\left\|v_1\right\|_{\left(1+\varepsilon_1, 2\right)}+t^{-\f{\mu}{2}} \tau^{\f{\mu}{2}}\left\|v_1\right\|_{Z, 1, 2} \\
& \lesssim t^{\f32-\frac{3}{1+\varepsilon_1}} \tau^{\f32}(\left\|v_1\right\|_{Z, 1, 2}+\left\|v_1\right\|_{Z, 1, \left(1+\varepsilon_1, 2\right)}).
\end{aligned}
\end{equation}
Analogously treated as in \eqref{l20'} and \eqref{l20}, by \eqref{l17}-\eqref{l19}, we have that for $\f{14}{5}\leq\mu<3$,
\begin{equation}\label{l24'}
\begin{aligned}
&\big\||\xi|(\xi_1\p_{\xi_1}\hat{v}+\xi_2\p_{\xi_2}\hat{v}+\xi_3\p_{\xi_3})\hat{v}\big\|_{L^2(\R^3)}
\\
&=\big\||\xi|  \sum_{k=1}^{3}\left(
 \xi_k (\partial_{\xi_k} \Psi_1) \hat{v}_1
+ \xi_k \Psi_1 \partial_{\xi_k} \hat{v}_1\right)\big\|_{L^2(\R^3)}\\
&\lesssim \big\|t^{1-\frac{\mu}{2}}\tau^{\f{\mu}{2}}|\xi|\hat{v}_{1}\big\|_{L^{2}(D'_2)}+ \big\|t^{-\f{\mu}{2}}\tau^{\f{\mu}{2}}(\sum_{k=1}^3\xi_k\p_{\xi_k})\hat{v}_{1}\big\|_{L^{2}(D'_2)}\\
&\quad+ \big\|t^{1-\frac{\mu}{2}}\tau|\xi|^{-\f{\mu}{2}+2}\hat{v}_{1}\big\|_{L^{2}(D'_1)}+ \big\|t^{-\frac{\mu}{2}}|\xi|^{-\frac{\mu}{2}}\tau|\xi|(\sum_{k=1}^3\xi_k\p_{\xi_k})\hat{v}_{1}\big\|_{L^{2}(D'_1)}\\
&\quad+ \big\|\tau|\xi|\hat{v}_{1}\big\|_{L^{2}(D'_3)}+ \big\|\tau|\xi|(\sum_{k=1}^3\xi_k\p_{\xi_k})\hat{v}_{1}\big\|_{L^{2}(D'_3)}\\
& \lesssim t^{-\frac{\mu}{2}}\tau^{\frac{\mu}{2}}\big\| t|\xi|  \hat{v}_1(\xi)\big\|_{L^2}+t^{-\frac{\mu}{2}}\tau^{\frac{\mu}{2}}\big\| (\sum_{k=1}^3\xi_k\p_{\xi_k})  \hat{v}_1(\xi)\big\|_{L^2}\\
&\quad+ \big\|(1+t|\xi|)^{-\frac{\mu}{2}}t|\xi|\hat{v}_{1}\big\|_{L^{2}(D'_1)}+ \big\|(1+t|\xi|)^{-\frac{\mu}{2}}(\sum_{k=1}^3\xi_k\p_{\xi_k})\hat{v}_{1}\big\|_{L^{2}(D'_1)}\\
&\quad+ t^{-1}\tau\big\|(1+t|\xi|)^{-\frac{\mu}{2}}\hat{v}_{1}\big\|_{L^{2}(D'_3)}+t^{-1}\tau \big\|(1+t|\xi|)^{-\frac{\mu}{2}}(\sum_{k=1}^3\xi_k\p_{\xi_k})\hat{v}_{1}\big\|_{L^{2}(D'_3)}\\
&\lesssim t^{-\f{\mu}{2}}\tau^{\f{\mu}{2}}(\left\|v_1\right\|_{Z, 1,2}+\left\|v_1\right\|_{Z, 1,(\f{6}{3+\mu}, 2)}),
\end{aligned}
\end{equation}
and for $\mu\geq3$,
\begin{equation}\label{l24}
\begin{aligned}
&\big\||\xi|(\xi_1\p_{\xi_1}\hat{v}+\xi_2\p_{\xi_2}\hat{v}+\xi_3\p_{\xi_3})\hat{v}\big\|_{L^2(\R^3)}
\\
& \lesssim t^{-\frac{\mu}{2}}\tau^{\frac{\mu}{2}}\big\| t|\xi|  \hat{v}_1(\xi)\big\|_{L^2}+t^{-\frac{\mu}{2}}\tau^{\frac{\mu}{2}}\big\| (\sum_{k=1}^3\xi_k\p_{\xi_k})  \hat{v}_1(\xi)\big\|_{L^2}\\
&\quad+ \big\|(1+t|\xi|)^{-\frac{3}{2}}t|\xi|\hat{v}_{1}\big\|_{L^{2}(D'_1)}+ \big\|(1+t|\xi|)^{-\frac{3}{2}}(\sum_{k=1}^3\xi_k\p_{\xi_k})\hat{v}_{1}\big\|_{L^{2}(D'_1)}\\
&\quad+ t^{-1}\tau\big\|(1+t|\xi|)^{-\frac{3}{2}}\hat{v}_{1}\big\|_{L^{2}(D'_3)}+t^{-1}\tau \big\|(1+t|\xi|)^{-\frac{3}{2}}(\sum_{k=1}^3\xi_k\p_{\xi_k})\hat{v}_{1}\big\|_{L^{2}(D'_3)}\\
&\lesssim t^{\f32-\frac{3}{1+\varepsilon_1}} \tau^{\f32}(\left\|v_1\right\|_{Z, 1, 2}+\left\|v_1\right\|_{Z, 1, \left(1+\varepsilon_1, 2\right)}).
\end{aligned}
\end{equation}
Therefore,  \eqref{l22'}-\eqref{l24} imply \eqref{l4}, and Lemma \ref{lem4} follows.
\end{proof}

\section{Proof of Theorem~\ref{YH-1}}
Motivated by \cite{LX} or \cite[Section 5]{LY}, based on Lemmas \ref{lem2}-\ref{lem3} and Lemma \ref{lem4},
for any $T>1$ and $\varepsilon_1\in(0, 1)$, we introduce the function space $X(T)$
$$
X(T)=\{u(t,x): \|u\|_{X(T)}<\infty, \operatorname{supp}_x u(x, t)\subset\{x: |x|\lesssim t\}\},
$$
which is equipped with a norm
\begin{equation}\label{Q1}
\|u\|_{X(T)}:=\begin{cases}
\sup _{t \in[1, T]}\big(t^{\f{\mu}{2}}\|u\|_{Z, 1, 2}+t^{\f{\mu}{2}}\|\p u\|_{Z, 1, 2}\big),&\f{14}{5}\leq\mu<3,\\
\sup _{t \in[1, T]}\big(t^{\f32-\f{3}{1+\varepsilon_1}}\|u\|_{Z, 1, 2}+t^{\f32-\f{3}{1+\varepsilon_1}}\|\p u\|_{Z, 1, 2}\big),&\mu\geq3.
\end{cases}
\end{equation}

Based on Duhamel's principle and the  expression \eqref{equ:q4}, we define the nonlinear mapping
\begin{equation}\label{Q2}
\begin{aligned}
\mathcal{N} u(t, x)&=\ve\Psi_{0}(t, 1, D)u_{0}(x)+\ve\Psi_{1}(t, 1, D) u_{1}(x)
+\int_{1}^{t} \Psi_{1}(t, \tau, D)|u(\tau, x)|^{p} \mathrm{d} \tau
\end{aligned}
\end{equation}
and introduce the  closed subset of $X(T)$
$$
X(T, M)=\left\{u \in X(T):\|u\|_{X(T)} \leq M\ve\right\},
$$
where $M$ is a fixed positive constant to be chosen later (see \eqref{L6} below).
\begin{proposition}\label{prop-1}
Let $\varepsilon_1\in(0,1)$ and $T>1$. For any  $u, v \in X(T)$, the following estimates hold:
\begin{equation}\label{Q3}
\|\mathcal{N} u\|_{X(T)} \leq C\big(\ve\left\|\left(u_{0}, u_{1}\right)\right\|_{\mathcal{X}}+\|u\|_{X(T)}^{p}\big)
\end{equation}
and
\begin{equation}\label{Q4}
\|\mathcal{N} u-\mathcal{N} v\|_{X(T)} \leq C\|u-v\|_{X(T)}\big(\|u\|_{X(T)}^{p-1}+\|v\|_{X(T)}^{p-1}\big),
\end{equation}
where and below the constant $C>0$ is independent of $\ve>0$ and $T>1$, and
\begin{equation}\label{Q5}
\begin{aligned}
\left\|\left(u_{0}, u_{1}\right)\right\|_{\mathcal{X}}:=&\|u_0\|_{Z, 1, 2}+\|u_1\|_{Z, 1, (\f65, 2)}+\|\nabla u_0\|_{Z, 1, 2}
+\|u_1\|_{Z, 1, 2}\\
&+\|u_0+u_1\|_{Z, 1, (1+\varepsilon_1, 2)}+\|u_0+u_1\|_{Z, 1, (\f{6}{3+\mu}, 2)}.
\end{aligned}
\end{equation}
\end{proposition}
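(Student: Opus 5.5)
The plan is to split $\mathcal N u$ into the linear part $\ve\Psi_0(t,1,D)u_0+\ve\Psi_1(t,1,D)u_1$ and the Duhamel part
\[
\mathcal{D}u(t)=\int_1^t\Psi_1(t,\tau,D)|u(\tau)|^p\,\mathrm{d}\tau ,
\]
and to treat the two pieces separately.

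\textbf{Linear part.} This piece is bounded directly by Lemmas \ref{lem2}, \ref{lem2-1} and \ref{lem3}. For $\f{14}{5}\le\mu<3$, multiplying \eqref{e:q46'} and \eqref{e:q71'} by $t^{\mu/2}$ gives $\le C\ve\|(u_0,u_1)\|_{\mathcal X}$. For $\mu\ge3$, we multiply \eqref{e:q46} and \eqref{e:q71} by $t^{\f32-\f3{1+\varepsilon_1}}$. The $t^{-\mu/2}$ terms are harmless because $t^{\f32-\f3{1+\varepsilon_1}-\f\mu2}\le 1$, which follows from $\f\mu2\ge \f32>\f32-\f3{1+\varepsilon_1}$. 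The remaining terms are exactly those collected in \eqref{Q5}.

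\textbf{Duhamel part.} By Minkowski's inequality in $\tau$ and Lemma \ref{lem4} with $v_1=|u(\tau)|^p$, we get for $\f{14}{5}\le\mu<3$
\[
t^{\f\mu2}\big(\|\mathcal D u\|_{Z,1,2}+\|\p\mathcal D u\|_{Z,1,2}\big)\lesssim\int_1^t\tau^{\f\mu2}\big(\||u|^p\|_{Z,1,2}+\||u|^p\|_{Z,1,(\f65,2)}+\||u|^p\|_{Z,1,(\f6{3+\mu},2)}\big)\mathrm{d}\tau .
\]
For $\mu\ge3$ the same holds with weight $\tau^{3/2}$ and the norm $(1+\varepsilon_1,2)$ in place of $(\f6{3+\mu},2)$. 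The core step is to bound $\||u|^p\|_{Z,1,(q,2)}$ for $q\in\{2,\f65,\f6{3+\mu},1+\varepsilon_1\}$. Since $Z$ is a first-order operator, $|Z|u|^p|\lesssim |u|^{p-1}|Zu|$ (which uses $p>1$). The norm $\|\cdot\|_{(q,2)}$ is an $L^q_r L^2_\omega$ norm with weight $r^2\mathrm{d}r$, so Hölder in $r$ and $\omega$ gives
\[
\||u|^{p-1}Zu\|_{(q,2)}\le \|u\|_{(b,\infty)}^{p-1}\|Zu\|_{(2,2)},\qquad \f1q=\f{p-1}{b}+\f12 .
\]
This works provided $b\ge2$, and Lemma \ref{YHC-05} then bounds $\|u\|_{(b,\infty)}$ by $\|u\|_{Z,1,2}+\|\p u\|_{Z,1,2}$. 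The constraint $b\ge2$, i.e. $\f1q-\f12\le\f{p-1}2$, must be checked for each $q$. For $q=\f65$ it needs $p\ge\f53$. For $q=\f6{3+\mu}$ it needs $p\ge1+\f\mu3-\ldots$, specifically $\f{3+\mu}6-\f12=\f\mu6\le\f{p-1}2$. For $q=1+\varepsilon_1$ it needs $p\ge2-\ldots$.

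I expect this exponent bookkeeping to be the main obstacle. When $b$ would be smaller than $2$, I would instead exploit the support condition $|x|\lesssim\tau$. Hölder in $r$ over $[0,C\tau]$ then lowers $q$ at the cost of a factor $\tau^{3(\f1q-\f1{q_0})}$, and this factor has to be absorbed by the time weights. Using $q_0=1$ with $b=2$ (i.e. $p=2$ interpolated) is the fallback.

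Combining these bounds, the integrand is $\lesssim\tau^{\f\mu2}\,\tau^{-\f\mu2 p}\,\tau^{(\text{support loss})}\|u\|_{X(T)}^p$. For $\f{14}{5}\le\mu<3$ the worst term is $\int_1^t\tau^{\f\mu2-\f\mu2p}\,\mathrm d\tau$, which is uniformly bounded exactly when $p>1+\f2\mu$. For $\mu\ge3$ the analogous integral $\int_1^t\tau^{\f32+(\f32-\f3{1+\varepsilon_1})p}\,\mathrm d\tau$ converges when $\varepsilon_1$ is chosen small in terms of $p>\f53$. This would give \eqref{Q3}.

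For \eqref{Q4}, I would repeat the argument with $|u|^p-|v|^p$. Pointwise $||u|^p-|v|^p|\lesssim(|u|^{p-1}+|v|^{p-1})|u-v|$. For the $Z$-derivative,
\[
|Z(|u|^p-|v|^p)|\lesssim(|u|^{p-1}+|v|^{p-1})|Z(u-v)|+\big||u|^{p-2}u-|v|^{p-2}v\big|\,|Zv| .
\]
If $p<2$, the last factor is bounded by $|u-v|^{p-1}$, which gives a Hölder-continuous bound. This requires a slightly weaker contraction, or one performs the contraction in the lower norm without $Z$ (which is standard). In either case the same Hölder and Lemma \ref{YHC-05} estimates then close the argument.
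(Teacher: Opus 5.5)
Your overall scheme is the paper's: Lemmas \ref{lem2}--\ref{lem3} for the free part, Minkowski plus Lemma \ref{lem4} for the Duhamel part, and H\"older plus Lemma \ref{YHC-05} for $|u|^p$. Your bookkeeping in the case $b\ge 2$ is correct. The gap is that the support-loss fallback is never carried through the time integrals. The integrability conditions you finally state ($p>1+\frac{2}{\mu}$, resp.\ $p>\frac53$) are the ones \emph{without} the loss. The best choice is $q_0=\frac{2}{p}$; that is what $b=2$ means, not $q_0=1$. It gives $\||u|^p\|_{Z,1,(q,2)}\lesssim \tau^{3(\frac1q-\frac p2)}\|u\|_{(2,\infty)}^{p-1}\|u\|_{Z,1,2}$. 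Now use the weights you took from Lemma \ref{lem4}:
\begin{itemize}
\item For $\frac{14}{5}\le\mu<3$ and $q=\frac{6}{3+\mu}$, the integrand is $\tau^{\frac{\mu}{2}}\tau^{\frac{3+\mu}{2}-\frac{3p}{2}}\tau^{-\frac{\mu p}{2}}=\tau^{\frac{3+2\mu}{2}-\frac{(3+\mu)p}{2}}$. This is integrable only if $p>\frac{2\mu+5}{\mu+3}$, and $\frac{2\mu+5}{\mu+3}\ge\frac{53}{29}>\frac{12}{7}\ge 1+\frac{2}{\mu}$.
\item For $\mu\ge3$ and $q=1+\varepsilon_1$, the integrand is $\tau^{\frac32+\frac{3(1-p)}{1+\varepsilon_1}}$. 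This is integrable only if $p>1+\frac{5(1+\varepsilon_1)}{6}>\frac{11}{6}$.
\end{itemize}
Since the direct H\"older route needs $p\ge 1+\frac{\mu}{3}$, resp.\ $p\ge\frac{2}{1+\varepsilon_1}$, your argument as written fails for every $p$ between $\max\{\frac53,1+\frac{2}{\mu}\}$ and roughly $\frac{11}{6}$.

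The missing observation concerns where the low-frequency norms come from in Lemma \ref{lem4}. The $(\frac{6}{3+\mu},2)$- and $(1+\varepsilon_1,2)$-norms arise only from $D_1'\cup D_3'$, where $|\Psi_1(t,\tau,\xi)|\lesssim \tau(1+t|\xi|)^{-\mu/2}$ by \eqref{l6}. Their true weight is therefore $\tau$; see the third line of \eqref{l9'} and of \eqref{l9}. The paper only afterwards enlarges this to $\tau^{\mu/2}$, resp.\ $\tau^{3/2}$. If you keep the weight $\tau$, the exponents become $1-\frac{(3+\mu)(p-1)}{2}$ and $1-\frac{3(p-1)}{1+\varepsilon_1}$. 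These are $<-1$ exactly when $p>1+\frac{4}{3+\mu}$, which is implied by $p>1+\frac{2}{\mu}$ because $\mu<3$, resp.\ when $p>1+\frac{2(1+\varepsilon_1)}{3}$, which holds for $p>\frac53$ once $\varepsilon_1$ is small. The $(\frac65,2)$- and $(2,2)$-terms need no loss.

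Your worry about $p<2$ is legitimate. The paper's corresponding step \eqref{Q8}, quoted from \cite{LZ}, carries the factor $\|u\|_\infty^{p-2}$ and so tacitly requires $p\ge2$. Likewise, for $1<p<2$ your difference estimate gives only a H\"older bound on $Z(|u|^p-|v|^p)$. What you would prove is then a contraction in a weaker norm, not \eqref{Q4} as stated. That suffices for Theorem \ref{YH-1}, but you should say so explicitly.
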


\begin{proof}
By \eqref{equ:q17}, \eqref{e:q46'}-\eqref{e:q46} and \eqref{l3}, for any $\varepsilon_1\in(0,1)$, we obtain that for $\mu\geq3$,
\begin{equation}\label{Q6}
\begin{aligned}
\|\mathcal{N} u(t, \cdot)\|_{Z, 1, 2} &\lesssim t^{-\f{\mu}{2}}\ve\left\|u_0\right\|_{Z, 1, 2}
+t^{-\f{\mu}{2}}\ve\left\|u_1\right\|_{Z, 1,(\f65, 2)}
+t^{\f32-\f{3}{1+\ve_1}}\ve\left\|u_0+u_1\right\|_{Z, 1,(1+\varepsilon_1, 2)}\\
&\quad+ t^{\f32-\f{3}{1+\ve_1}}\int_{1}^{t} \tau^{\f32}\left(\left\||u(\tau, \cdot)|^p\right\|_{Z, 1,(1+\varepsilon_1, 2)}+\left\||u(\tau, \cdot)|^p\right\|_{Z, 1,(\f65, 2)}\right)\mathrm{d}\tau,
\end{aligned}
\end{equation}
and for $\f{14}{5}\leq\mu<3$,
\begin{equation}\label{Q6'}
\begin{aligned}
\|\mathcal{N} u(t, \cdot)\|_{Z, 1, 2} &\lesssim t^{-\f{\mu}{2}}\ve\left\|u_0\right\|_{Z, 1, 2}
+t^{-\f{\mu}{2}}\ve\left\|u_1\right\|_{Z, 1,(\f65, 2)}
+t^{-\f{\mu}{2}}\ve\left\|u_0+u_1\right\|_{Z, 1,(\f{6}{3+\mu}, 2)}\\
&\quad+ t^{-\f{\mu}{2}}\int_{1}^{t} \tau^{\f{\mu}{2}}\left(\left\||u(\tau, \cdot)|^p\right\|_{Z, 1,(\f{6}{3+\mu}, 2)}+\left\||u(\tau, \cdot)|^p\right\|_{Z, 1,(\f65, 2)}\right)\mathrm{d}\tau.
\end{aligned}
\end{equation}
For the terms $\left\||u(\tau, \cdot)|^p\right\|_{Z, 1,(1+\varepsilon_1, 2)}$ and $\left\||u(\tau, \cdot)|^p\right\|_{Z, 1,(\f{6}{3+\mu}, 2)}$,
it follows from \cite[(5.1.21)]{LZ} with $\operatorname{supp} \chi(\tau, x)\subset\{(\tau, x)\in[1, T]\times \R^3: |x|\lesssim \tau\}$ that
\begin{equation}\label{Q8}
\left\||u(\tau, \cdot)|^p\right\|_{Z, 1,(\f{6}{3+\mu}, 2)} \lesssim  \left\|u\right\|_{(\f{6}{\mu}, \infty)}\left\|u\right\|^{p-2}_{\infty}\|u(\tau, \cdot)\|_{Z, 1,2}
\end{equation}
and
\begin{equation}\label{Q8}
\left\||u(\tau, \cdot)|^p\right\|_{Z, 1,(1+\varepsilon_1, 2)} \lesssim  \left\|u\right\|_{(\tilde{q}, \infty)}\left\|u\right\|^{p-2}_{\infty}\|u(\tau, \cdot)\|_{Z, 1,2},
\end{equation}
where $\f{6}{\mu}\in(2, +\infty)$ and $\tilde{q}=\tilde{q}(\varepsilon_1)\in(2, +\infty)$ is determined by
\begin{equation}\label{Q9}
\f{1}{1+\varepsilon_1}=\f{1}{2}+\f{1}{\tilde{q}}.
\end{equation}

Substituting \eqref{Q10} and \eqref{Q21} in Subsection \ref{sec2-1-2} into \eqref{Q8} yields
$$
\left\||u(\tau, \cdot)|^p\right\|_{Z, 1,(1+\varepsilon_1, 2)} \lesssim  (\|u(\tau, \cdot)\|_{Z, 1, 2}+\|\p u(\tau, \cdot)\|_{Z, 1, 2})^{p-1}\|u(\tau, \cdot)\|_{Z, 1,2}
$$
and
$$
\left\||u(\tau, \cdot)|^p\right\|_{Z, 1,(\f{6}{3+\mu}, 2)} \lesssim  (\|u(\tau, \cdot)\|_{Z, 1, 2}+\|\p u(\tau, \cdot)\|_{Z, 1, 2})^{p-1}\|u(\tau, \cdot)\|_{Z, 1,2}.
$$
By the definition of norm \eqref{Q1} in space $X(T)$, it holds that for $\mu\geq3$,
\begin{equation}\label{Q23'}
\left\||u(\tau, \cdot)|^p\right\|_{Z, 1,(1+\varepsilon_1, 2)}\lesssim \tau^{p(\f32-\f{3}{1+\ve_1})}\ |u\|_{X(T)}^{p},
\end{equation}
and for $\f{14}{5}\leq\mu<3$,
\begin{equation}\label{Q23''}
\left\||u(\tau, \cdot)|^p\right\|_{Z, 1,(\f{6}{3+\mu}, 2)}\lesssim \tau^{-\f{\mu p}{2}}\ |u\|_{X(T)}^{p}.
\end{equation}
For the term $\left\||u(\tau, \cdot)|^p\right\|_{Z, 1,(\f65, 2)}$,
Lemma 2.1 in \cite{LX} together with the condition  $p>\f53$ implies that
\begin{equation}\label{Q23}
\begin{aligned}
\||u(\tau, \cdot)|^p\|_{Z, 1, (\f65, 2)}&\leq \||u|^{p-1}\|_{(3, \infty)}\|u\|_{Z, 1, 2}\\
&\leq  \||u|^{p-1-\f23}\|_{\infty}\||u|^{\f23}\|_{(3, \infty)}  \|u\|_{Z, 1, 2}\\
&\leq  \|u\|^{p-1-\f23}_{\infty}\|u\|^{\f23}_{(2, \infty)}  \|u\|_{Z, 1, 2}.
\end{aligned}
\end{equation}
Then by \eqref{Q20}-\eqref{Q21}, one can conclude
\begin{equation}\label{Q-24}
\left\||u(\tau, \cdot)|^p\right\|_{Z, 1,(\f65, 2)}\lesssim  (\|u(\tau, \cdot)\|_{Z, 1, 2}+\|\p u(\tau, \cdot)\|_{Z, 1, 2})^{p-1}\|u(\tau, \cdot)\|_{Z, 1,2}\lesssim \tau^{p(\f32-\f{3}{1+\ve_1})}\|u\|_{X(T)}^{p}.
\end{equation}
Substituting \eqref{Q-24} and \eqref{Q23'}-\eqref{Q23''} into \eqref{Q6} and \eqref{Q6'}  yields that for $\mu\geq3$,
\begin{equation}\label{Q25}
\begin{aligned}
\|\mathcal{N} u(t, \cdot)\|_{Z, 1, 2} &\lesssim t^{-\f{\mu}{2}}\ve\left\|u_0\right\|_{Z, 1, 2}
+t^{-\f{\mu}{2}}\ve\left\|u_1\right\|_{Z, 1,(\f65, 2)}
+t^{\f32-\f{3}{1+\ve_1}}\ve\left\|u_0+u_1\right\|_{Z, 1,(1+\varepsilon_1, 2)}\\
&\quad+ t^{\f32-\f{3}{1+\ve_1}}\int_{1}^{t} \tau^{\f32+p(\f32-\f{3}{1+\ve_1})}\mathrm{d}\tau,
\end{aligned}
\end{equation}
and for $\f{14}{5}\leq\mu<3$,
\begin{equation}\label{Q25'}
\begin{aligned}
\|\mathcal{N} u(t, \cdot)\|_{Z, 1, 2} &\lesssim t^{-\f{\mu}{2}}\ve\left\|u_0\right\|_{Z, 1, 2}
+t^{-\f{\mu}{2}}\ve\left\|u_1\right\|_{Z, 1,(\f65, 2)}
+t^{-\f{\mu}{2}}\ve\left\|u_0+u_1\right\|_{Z, 1,(\f{6}{3+\mu}, 2)}\\
&\quad+ t^{-\f{\mu}{2}}\int_{1}^{t} \tau^{\f{\mu}{2}+(-\f{\mu}{2})p}\mathrm{d}\tau.
\end{aligned}
\end{equation}
Observe that $\f32-\f{3}{1+\ve_1}\rightarrow 0+$  as $\varepsilon_1\rightarrow0+$. Hence, for \begin{equation}\label{crit-1}
p>p_f(3)=\f53,
\end{equation}
we can always choose $\varepsilon_1>0$ sufficiently small such that
$$
\f32+p (\f32-\f{3}{1+\ve_1})<-1.
$$
On the other hand, one has $\f{\mu}{2}+(-\f{\mu}{2})p<-1$ if and only if
\begin{equation}\label{crit}
p>1+\f{2}{\mu}.
\end{equation}
Consequently,  for $p>\max\{\f53, 1+\f{2}{\mu}\}$, it follows that
\begin{equation}\label{Q26}
\begin{aligned}
t^{\f{3}{1+\ve_1}-\f32}\|\mathcal{N} u(t, \cdot)\|_{Z, 1, 2} &\lesssim \ve\left\|u_0\right\|_{Z, 1, 2}
+\ve\left\|u_1\right\|_{Z, 1,(\f65, 2)}+\ve\left\|u_0+u_1\right\|_{Z, 1,(1+\varepsilon_1, 2)}+ \|u\|_{X(T)}^p\\
& \lesssim \ve\left\|\left(u_{0}, u_{1}\right)\right\|_{\mathcal{X}}+ \|u\|_{X(T)}^p
\end{aligned}
\end{equation}
and
\begin{equation}\label{Q26'}
\begin{aligned}
t^{\f{\mu}{2}}\|\mathcal{N} u(t, \cdot)\|_{Z, 1, 2} &\lesssim \ve\left\|u_0\right\|_{Z, 1, 2}
+\ve\left\|u_1\right\|_{Z, 1,(\f65, 2)}+\ve\left\|u_0+u_1\right\|_{Z, 1,(\f{6}{3+\mu}, 2)}+ \|u\|_{X(T)}^p\\
& \lesssim \ve\left\|\left(u_{0}, u_{1}\right)\right\|_{\mathcal{X}}+ \|u\|_{X(T)}^p.
\end{aligned}
\end{equation}
Similarly, by \eqref{e:q71'}-\eqref{e:q71} and \eqref{l4}, we obtain that for $\mu\geq3$,
\begin{equation}\label{Q27}
\begin{aligned}
\|\p\mathcal{N}u(t, \cdot)\|_{Z, 1, 2} &\lesssim t^{-\f{\mu}{2}}\ve\left\|\nabla u_0\right\|_{Z, 1, 2}
+t^{-\f{\mu}{2}}\ve\left\| u_1\right\|_{Z, 1, 2}+t^{\f32-\f{3}{1+\ve_1}}\ve\left\|u_0+u_1\right\|_{Z, 1,(1+\ve_1, 2)}\\
&\quad+ t^{\f32-\f{3}{1+\ve_1}}\int_{1}^{t} \tau^{\f32}(\left\||u(\tau, \cdot)|^p\right\|_{Z, 1,2}
+\left\||u(\tau, \cdot)|^p\right\|_{Z, 1,(1+\ve_1, 2)})\mathrm{d}\tau,
\end{aligned}
\end{equation}
and for $\f{14}{5}\leq\mu<3$,
\begin{equation}\label{Q27'}
\begin{aligned}
\|\p\mathcal{N}u(t, \cdot)\|_{Z, 1, 2} &\lesssim t^{-\f{\mu}{2}}\ve\left\|\nabla u_0\right\|_{Z, 1, 2}
+t^{-\f{\mu}{2}}\ve\left\| u_1\right\|_{Z, 1, 2}+t^{-\f{\mu}{2}}\ve\left\|u_0+u_1\right\|_{Z, 1,(\f{6}{3+\mu}, 2)}\\
&\quad+ t^{-\f{\mu}{2}}\int_{1}^{t} \tau^{\f{\mu}{2}}(\left\||u(\tau, \cdot)|^p\right\|_{Z, 1,2}
+\left\||u(\tau, \cdot)|^p\right\|_{Z, 1,(\f{6}{3+\mu}, 2)})\mathrm{d}\tau.
\end{aligned}
\end{equation}
For the term $\left\||u(\tau, \cdot)|^p\right\|_{Z, 1, 2}$,
it follows from \eqref{Q21} and the condition  $p>1$ that
\begin{equation}\label{Q23}
\begin{aligned}
\||u(\tau, \cdot)|^p\|_{Z, 1, 2}&\leq \||u|^{p-1}\|_{\infty}\|u\|_{Z, 1, 2}\\
&\leq  \|u\|^{p-1}_{\infty}\|u\|_{Z, 1, 2}\\
&\lesssim  (\|u(\tau, \cdot)\|_{Z, 1, 2}+\|\p u(\tau, \cdot)\|_{Z, 1, 2})^{p-1}\|u(\tau, \cdot)\|_{Z, 1,2}.
\end{aligned}
\end{equation}
This, together with \eqref{Q23'}-\eqref{Q23''} and \eqref{Q27}-\eqref{Q27'}, gives that for $\mu\geq3$,
$$
\begin{aligned}
\|\p \mathcal{N}u(t, \cdot)\|_{Z, 1, 2} &\lesssim t^{-\f{\mu}{2}}\ve\left\|\nabla u_0\right\|_{Z, 1, 2}
+t^{-\f{\mu}{2}}\ve\left\|u_1\right\|_{Z, 1, 2}+t^{\f32-\f{3}{1+\ve_1}}\ve\left\|u_0+u_1\right\|_{Z, 1,2}\\
&\quad+ t^{\f32-\f{3}{1+\ve_1}}\|u\|_{X(T)}^p\int_{1}^{t} \tau^{\f32+p(\f32-\f{3}{1+\ve_1})}\mathrm{d}\tau
\end{aligned}
$$
and for $\f{14}{5}\leq\mu<3$,
$$
\begin{aligned}
\|\p \mathcal{N}u(t, \cdot)\|_{Z, 1, 2} &\lesssim t^{-\f{\mu}{2}}\ve\left\|\nabla u_0\right\|_{Z, 1, 2}
+t^{-\f{\mu}{2}}\ve\left\|u_1\right\|_{Z, 1, 2}+t^{-\f{\mu}{2}}\ve\left\|u_0+u_1\right\|_{Z, 1,2}\\
&\quad+ t^{-\f{\mu}{2}}\|u\|_{X(T)}^p\int_{1}^{t} \tau^{\f{\mu}{2}+(-\f{\mu}{2})p}\mathrm{d}\tau.
\end{aligned}
$$
For $p>\max\{\f53, 1+\f{2}{\mu}\}$, we can choose $\varepsilon_1>0$ sufficiently small  such that $\f32+p(\f32-\f{3}{1+\ve_1})<-1$ and $\f{\mu}{2}+(-\f{\mu}{2})p<-1$ hold.
Hence, for $p>\max\{\f53, 1+\f{2}{\mu}\}$, it holds that
\begin{equation}\label{Q24}
\begin{aligned}
t^{\f{3}{1+\ve_1}-\f32}\|\p \mathcal{N} u(t, \cdot)\|_{Z, 1,2} &\lesssim \ve\left\|\nabla u_0\right\|_{Z, 1,2}+\ve\left\|u_1\right\|_{Z, 1,2}
+\ve\left\|u_0+u_1\right\|_{Z, 1, (1+\ve_1, 2)} +\|u\|_{X(T)}^p\\
&  \lesssim \ve\left\|\left(u_{0}, u_{1}\right)\right\|_{\mathcal{X}}+ \|u\|_{X(T)}^p
\end{aligned}
\end{equation}
and
\begin{equation}\label{Q24'}
\begin{aligned}
t^{\f{\mu}{2}}\|\p \mathcal{N} u(t, \cdot)\|_{Z, 1,2} &\lesssim \ve\left\|\nabla u_0\right\|_{Z, 1,2}+\ve\left\|u_1\right\|_{Z, 1,2}
+\ve\left\|u_0+u_1\right\|_{Z, 1, (\f{6}{3+\mu}, 2)} +\|u\|_{X(T)}^p\\
&  \lesssim \ve\left\|\left(u_{0}, u_{1}\right)\right\|_{\mathcal{X}}+ \|u\|_{X(T)}^p.
\end{aligned}
\end{equation}
Collecting \eqref{Q26}-\eqref{Q26'} and \eqref{Q24}-\eqref{Q24'}, one can obtain  \eqref{Q3}.

We next prove the contractive estimate \eqref{Q4}. As  in \eqref{Q6} and \eqref{Q6'}, it follows that for $u, v\in X(T)$ and $p>\max\{\f53, 1+\f{2}{\mu}\}$,
\begin{equation}\label{L20}
\begin{aligned}
\|(\mathcal{N} u-\mathcal{N} v)(t, \cdot)\|_{Z, 1, 2}& \lesssim t^{\f32-\f{3}{1+\ve_1}}\int_{1}^{t} \tau^{\f32}\left\||u-v|(\tau, \cdot)(|u|+|v|)^{p-1}(\tau, \cdot)\right\|_{Z, 1,(1+\varepsilon_1, 2)}\mathrm{d}\tau\\
&\quad+t^{\f32-\f{3}{1+\ve_1}}\int_{1}^{t} \tau^{\f32}\left\||u-v|(\tau, \cdot)(|u|+|v|)^{p-1}(\tau, \cdot)\right\|_{Z, 1,(\f65, 2)}\mathrm{d}\tau\\
&\lesssim t^{\f32-\f{3}{1+\ve_1}} \|u-v\|_{X(T)}\big(\|u\|_{X(T)}^{p-1}+\|v\|_{X(T)}^{p-1}\big)\int_{1}^{t} \tau^{\f32+p(\f32-\f{3}{1+\ve_1})}\mathrm{d}\tau\\
&\lesssim t^{\f32-\f{3}{1+\ve_1}} \|u-v\|_{X(T)}\big(\|u\|_{X(T)}^{p-1}+\|v\|_{X(T)}^{p-1}\big)
\end{aligned}
\end{equation}
and
\begin{equation}\label{L20'}
\begin{aligned}
\|(\mathcal{N} u-\mathcal{N} v)(t, \cdot)\|_{Z, 1, 2}& \lesssim t^{-\f{\mu}{2}}\int_{1}^{t} \tau^{\f{\mu}{2}}\left\||u-v|(\tau, \cdot)(|u|+|v|)^{p-1}(\tau, \cdot)\right\|_{Z, 1,(\f{6}{3+\mu}, 2)}\mathrm{d}\tau\\
&\quad+t^{-\f{\mu}{2}}\int_{1}^{t} \tau^{\f{\mu}{2}}\left\||u-v|(\tau, \cdot)(|u|+|v|)^{p-1}(\tau, \cdot)\right\|_{Z, 1,(\f65, 2)}\mathrm{d}\tau\\
&\lesssim t^{-\f{\mu}{2}} \|u-v\|_{X(T)}\big(\|u\|_{X(T)}^{p-1}+\|v\|_{X(T)}^{p-1}\big)\int_{1}^{t} \tau^{\f{\mu}{2}+(-\f{\mu}{2})p}\mathrm{d}\tau\\
&\lesssim t^{-\f{\mu}{2}} \|u-v\|_{X(T)}\big(\|u\|_{X(T)}^{p-1}+\|v\|_{X(T)}^{p-1}\big).
\end{aligned}
\end{equation}
On the other hand, by using the same argument as in the derivation of \eqref{Q27} and \eqref{Q27'}, we have that for $u, v \in X(T)$ and $p>\max\{\f53, 1+\f{2}{\mu}\}$,
\begin{equation}\label{L21}
\begin{aligned}
\|\p(\mathcal{N} u-\mathcal{N} v)(t, \cdot)\|_{Z, 1, 2}&
\lesssim t^{\f32-\f{3}{1+\ve_1}}\int_{1}^{t} \tau^{\f32}\left\||u-v|(\tau, \cdot)(|u|+|v|)^{p-1}(\tau, \cdot)\right\|_{Z, 1,2}\mathrm{d}\tau\\
&\quad+t^{\f32-\f{3}{1+\ve_1}}\int_{1}^{t} \tau^{\f32}\left\||u-v|(\tau, \cdot)(|u|+|v|)^{p-1}(\tau, \cdot)\right\|_{Z, 1,(1+\ve_1, 2)}\mathrm{d}\tau\\
&\lesssim t^{\f32-\f{3}{1+\ve_1}} \|u-v\|_{X(T)}\big(\|u\|_{X(T)}^{p-1}+\|v\|_{X(T)}^{p-1}\big)\int_{1}^{t} \tau^{\f32+p(\f32-\f{3}{1+\ve_1})}\mathrm{d}\tau\\
&\lesssim t^{\f32-\f{3}{1+\ve_1}} \|u-v\|_{X(T)}\big(\|u\|_{X(T)}^{p-1}+\|v\|_{X(T)}^{p-1}\big)
\end{aligned}
\end{equation}
and
\begin{equation}\label{L21'}
\begin{aligned}
\|\p(\mathcal{N} u-\mathcal{N} v)(t, \cdot)\|_{Z, 1, 2}&
\lesssim t^{-\f{\mu}{2}}\int_{1}^{t} \tau^{\f{\mu}{2}}\left\||u-v|(\tau, \cdot)(|u|+|v|)^{p-1}(\tau, \cdot)\right\|_{Z, 1,2}\mathrm{d}\tau\\
&\quad+t^{-\f{\mu}{2}}\int_{1}^{t} \tau^{\f{\mu}{2}}\left\||u-v|(\tau, \cdot)(|u|+|v|)^{p-1}(\tau, \cdot)\right\|_{Z, 1,(\f{6}{3+\mu}, 2)}\mathrm{d}\tau\\
&\lesssim t^{-\f{\mu}{2}} \|u-v\|_{X(T)}\big(\|u\|_{X(T)}^{p-1}+\|v\|_{X(T)}^{p-1}\big)\int_{1}^{t} \tau^{\f{\mu}{2}+(-\f{\mu}{2})p}\mathrm{d}\tau\\
&\lesssim t^{-\f{\mu}{2}} \|u-v\|_{X(T)}\big(\|u\|_{X(T)}^{p-1}+\|v\|_{X(T)}^{p-1}\big).
\end{aligned}
\end{equation}
Therefore, \eqref{Q4} is proved, and Proposition \ref{prop-1} follows by \eqref{Q1}-\eqref{Q2} and \eqref{Q3}.
\end{proof}

Based on Proposition \ref{prop-1}, we now turn to prove Theorem \ref{YH-1}.
\vskip 0.1 true cm

\begin{proof}
[Proof of Theorem~\ref{YH-1}]
 Choosing $M=3C\left\|\left(u_{0}, u_{1}\right)\right\|_{\mathcal{X}}$ in $X(T, M)$, where $C$ is the positive constant appeared in
 Proposition \ref{prop-1}, we have that for any $u \in X(T, M)$,
\begin{equation}\label{L6}
\begin{aligned}
\|\mathcal{N} u\|_{X(T)}
& \leq C\ve\left\|\left(u_{0}, u_{1}\right)\right\|_{\mathcal{X}}+C\|u\|_{X(T)}^{p} \\
& \leq C \ve\left\|\left(u_{0}, u_{1}\right)\right\|_{\mathcal{X}}+C\left(3 C\ve\left\|\left(u_{0}, u_{1}\right)\right\|_{\mathcal{X}}\right)^{p} \\
& \leq\left(C+3^{p} C^{p+1}\left\|\left(u_{0}, u_{1}\right)\right\|_{\mathcal{X}}^{p-1}\ve^{p-1}\right)\ve\left\|\left(u_{0}, u_{1}\right)\right\|_{\mathcal{X}} \\
& \leq 3 C\ve\left\|\left(u_{0}, u_{1}\right)\right\|_{\mathcal{X}},
\end{aligned}
\end{equation}
where $\ve\le \big(\f{2}{3^pC^p}\big)^{\f{1}{p-1}}\f{1}{1+\left\|\left(u_{0}, u_{1}\right)\right\|_{\mathcal{X}}}$. Similarly, if $\ve\le\f{{(4C)}^{-\f{1}{p-1}}}{3C(1+\left\|\left(u_{0}, u_{1}\right)\right\|_{\mathcal{X}})}$, it follows that
\begin{equation}\label{L7}
\begin{aligned}
\|\mathcal{N} u-\mathcal{N} v\|_{X(T)} &\leq C\|u-v\|_{X(T)}\left(\|u\|_{X(T)}^{p-1}+\|v\|_{X(T)}^{p-1}\right)\\
&\leq C\|u-v\|_{X(T)}\cdot2\left(3 C\ve\left\|\left(u_{0}, u_{1}\right)\right\|_{\mathcal{X}}\right)^{p-1}\\
&\leq \f{1}{2}\|u-v\|_{X(T)}.
\end{aligned}
\end{equation}
Define $$\varepsilon_0=\min\{\f{\big(\f{2}{3^pC^p}\big)^{\f{1}{p-1}}}{1+\left\|\left(u_{0}, u_{1}\right)\right\|_{\mathcal{X}}},\f{{(4C)}^{-\f{1}{p-1}}}{3C(1+\left\|\left(u_{0}, u_{1}\right)\right\|_{\mathcal{X}})}\}.$$
Then for any $\ve\leq \varepsilon_0$,
we have that $\mathcal{N}$ is a contraction mapping on $X(T, M)$ into itself. Therefore, there exists a unique solution $u \in X(T, M)$ such that
$\mathcal{N}u=u$. Hence,  $u$ is a  solution  of \eqref{equ:eff1}. The independence of the constant $C>0$ with respect to $T$ implies that $u$ is a global solution of \eqref{equ:eff1}. This completes the proof of Theorem \ref{YH-1}.
\end{proof}

\vskip 0.1 true cm

{\bf Data availability statement}. All data that support the findings of this study
are included within the article (and any supplementary files).


\end{document}